\newtheorem{theorem}{Theorem}[section]
\newtheorem{lemma}[theorem]{Lemma}
\newtheorem{proposition}[theorem]{Proposition}
\newtheorem{corollary}[theorem]{Corollary}
\newtheorem*{main}{Main Theorem}
\theoremstyle{definition}
\newtheorem{example}[theorem]{Example}
\newtheorem{remark}[theorem]{Remark}
\newtheorem{definition}[theorem]{Definition}
\newtheorem{convention}[theorem]{Convention}
\newcommand{\ol}{\overline}
\newcommand{\T}{\mathcal{T}}
\newcommand{\diam}{{\rm diam}}
\newcommand{\N}{\mathcal{N}}
\newcommand{\M}{\mathcal{M}} 
\newcommand{\R}{\mathbb{R}}
\newcommand\mf[1]{\mathfrak{#1}}
\newcommand\mc[1]{\mathcal{#1}}
\tikzset{snake it/.style={decorate, decoration=snake}}
\tikzset{
  fermion/.style={draw=black, postaction={decorate},decoration={markings,mark=at position .55 with {\arrow{Latex}}}},
  vertex/.style={draw,shape=circle,fill=black,minimum size=2pt,inner sep=0pt},
  photon/.style={wavy semicircle,wave amplitude=0.2mm,wave count=6}
}
\newif\ifmirrorsemicircle
\tikzset{
    wave amplitude/.initial=0.2cm,
    wave count/.initial=8,
    mirror semicircle/.is if=mirrorsemicircle,
    mirror semicircle=false,
    wavy semicircle/.style={
        to path={
            let \p1 = (\tikztostart),
            \p2 = (\tikztotarget),
            \n1 = {veclen(\y2-\y1,\x2-\x1)},
            \n2 = {atan2(\x2-\x1,\y2-\y1))} in
            plot [
                smooth,
                samples=(\pgfkeysvalueof{/tikz/wave count}+0.5)*8+1, 
                domain=0:1,
                shift={($(\p1)!0.5!(\p2)$)}
            ] ({ 
                (\x*180-\n2 + 180 + \ifmirrorsemicircle 1 \else -1 \fi * 90%
            }:{ 
                (%
                    \n1/2+\pgfkeysvalueof{/tikz/wave amplitude} * %
                    sin(
                        \x * 360 * (\pgfkeysvalueof{/tikz/wave count} + 0.5%
                    )%
                )%
            })
        } (\tikztotarget)
    }
}
\newcommand{\hgline}[2]{
\pgfmathsetmacro{\thetaone}{#1}
\pgfmathsetmacro{\thetatwo}{#2}
\pgfmathsetmacro{\theta}{(\thetaone+\thetatwo)/2}
\pgfmathsetmacro{\phi}{abs(\thetaone-\thetatwo)/2}
\pgfmathsetmacro{\close}{less(abs(\phi-90),0.0001)}
\ifdim \close pt = 1pt
    \draw[black] (\theta+180:1) -- (\theta:1);
\else
    \pgfmathsetmacro{\R}{tan(\phi)}
    \pgfmathsetmacro{\distance}{sqrt(1+\R^2)}
    \draw[black, thick] (\theta:\distance) circle (\R);
\fi
}
\newcommand{\hglinewhite}[2]{
\pgfmathsetmacro{\thetaone}{#1}
\pgfmathsetmacro{\thetatwo}{#2}
\pgfmathsetmacro{\theta}{(\thetaone+\thetatwo)/2}
\pgfmathsetmacro{\phi}{abs(\thetaone-\thetatwo)/2}
\pgfmathsetmacro{\close}{less(abs(\phi-90),0.0001)}
\ifdim \close pt = 1pt
    \draw[black] (\theta+180:1) -- (\theta:1);
\else
    \pgfmathsetmacro{\R}{tan(\phi)}
    \pgfmathsetmacro{\distance}{sqrt(1+\R^2)}
    \fill[white] (\theta:\distance) circle (\R);
\fi
}
\begin{document}
\pagenumbering{arabic}

\title{Hierarchically hyperbolic groups are determined \\ by their Morse boundaries}
\date{}
\author{Sarah C. Mousley and Jacob Russell}

\maketitle 
\abstract{We generalize a result of Paulin on the Gromov boundary of hyperbolic groups to the Morse boundary of proper, maximal hierarchically hyperbolic spaces admitting cocompact group actions by isometries.
Namely we show that if the Morse boundaries of two such spaces each contain at least three points, then the spaces are quasi-isometric if and only if there exists a 2--stable, quasi-m\"obius homeomorphism between their Morse boundaries. Our result extends a recent result of Charney--Murray, who prove such a classification for  CAT(0) groups, and is new for mapping class groups and the fundamental groups of $3$--manifolds without Nil or Sol components.}

\section{Introduction} 

The Gromov boundary has proved a profitable tool in the study of the coarse geometry of hyperbolic groups. Every quasi-isometry between hyperbolic groups extends to a homeomorphism between the Gromov boundaries of the groups, and so it is natural to ask if the homeomorphism type of the boundary determines the quasi-isometry type of the group. In general, this is not the case as examples of non-quasi-isometric groups with homeomorphic Gromov boundary are given in \cite{counter1}, \cite{counter2}, and \cite{counter3}. However,  Paulin \cite{paulin} defined a cross-ratio on the Gromov boundary and proved that if the boundary homeomorphism is also quasi-m\"obius with respect to this cross-ratio, then it is necessarily induced by a quasi-isometry between the groups. 

In an effort to generalize the Gromov boundary to a broader class of spaces, Cordes \cite{cordes}  introduced a boundary for proper geodesic metric spaces called the Morse boundary.  The Morse boundary captures the asymptotics of the ``hyperbolic-like" directions in the space and agrees with the Gromov boundary when the space is hyperbolic. Cordes proved that the Morse boundary is a quasi-isometry invariant in the same sense as the Gromov boundary---any quasi-isometry between proper geodesic metric spaces induces a homeomorphism of their Morse boundaries (see Proposition \ref{prop:cordes_inducedmap}).  In light of this, Charney and Murray \cite{CM} established a Morse boundary version of Paulin's theorem for CAT(0) groups. The main result of our paper provides an analogous result for hierarchically hyperbolic groups, a class containing mapping class groups of finite type surfaces and right-angled Artin groups.

\begin{main} \label{thrm:main} 
Suppose $G$ and $H$ are hierarchically hyperbolic groups whose Morse boundaries have at least three points. A homeomorphism between the Morse boundaries of $G$ and $H$ is induced by a quasi-isometry if and only if it and its inverse are  2--stable and quasi-m\"obius.
\end{main}

In fact, we will show that the above theorem holds for any proper geodesic metric spaces admitting cocompact group actions by isometries and satisfying the  \emph{small cross-ratio property}, a technical condition introduced in Section \ref{subsec:controlled_distortion}.  This allows us to extend the Main Theorem to include the fundamental groups of  $3$--manifolds without Nil or Sol components.

Paulin's proof hinged upon the observation that to each triple of distinct points $a,b,$ and $c$ in the Gromov boundary of a hyperbolic space, there is an associated uniformly bounded diameter set of points, thought of as the coarse center of an ideal triangle with vertices $a,b,$ and $c$. Thus given a homeomorphism between the boundaries of two hyperbolic spaces,  a map can be built between the spaces by sending centers to centers,  and Paulin shows the quasi-m\"obius condition is sufficient to ensure this map is a quasi-isometry.  The approach in \cite{CM} is very similar in spirit and substance to that of Paulin.  Our approach follows the overall strategy of \cite{CM} very closely; however, Charney and Murray are able to take advantage of the fact that in CAT(0) spaces, Morse geodesics are strongly contracting. This is the point where our paper diverges. In general, Morse geodesics are not strongly contracting \cite{contracting}, 
and the bulk of our paper is devoted to overcoming this obstacle. An advantage of our work is that all of our preliminary results hold for any finitely generated group. So if one could demonstrate that every finitely generated group has the small cross-ratio property, then one could immediately extend our Main Theorem to all finitely generated groups (see Theorems \ref{thrm:quasi-isometry_to_quasi-mobius}, \ref{thrm:main_body_part1}, and \ref{thrm:main_body_part2}). 

In Section 2 we will collect the required background we will need about coarse geometry, the Morse boundary, and hierarchically hyperbolic spaces. In Section 3 we will uncover some basic facts about infinite Morse geodesics and ideal Morse triangles, which are reminiscent of the properties of infinite geodesics and ideal triangles in a hyperbolic space.  This will allow us to both define a cross-ratio on the Morse boundary and to associate sets of bounded diameter to triples of distinct boundary points.  In Section 4 we will show 
that every quasi-isometry between proper geodesic metric spaces induces a 2--stable, quasi-m\"obius homeomorphism of  Morse boundaries (Theorem \ref{thrm:quasi-isometry_to_quasi-mobius}) and that the converse is also true for spaces that are \emph{Morse centered} and have the small cross-ratio property (Theorems \ref{thrm:main_body_part1} and \ref{thrm:main_body_part2}). In Section \ref{section:HHSs} we prove that many hierarchically hyperbolic spaces, including all hierarchically hyperbolic groups, have the small cross-ratio property (Proposition \ref{small:cross-ratio}), thus proving the Main Theorem from the introduction. 

\

{\bf Acknowledgments:} The authors would like to thank Devin Murray, Ruth Charney, and  Matthew Cordes for helpful conversation about the Morse boundary and Mark Hagen for his assistance with hierarchically hyperbolic spaces. 
The authors are also very grateful for the support and guidance of their advisors, Chris Leininger and Jason Behrstock. The second author would like to thank Chris Leininger for his mentorship and hospitality during a visit funded by NSF grants DMS 1107452, 1107263, 1107367 ``RNMS: Geometric Structures and Representation Varieties" (the GEAR Network), where the work on this project began.

\section{Background and definitions} 

\subsection{Coarse geometry}

Throughout this paper $X$ and $Y$ will denote geodesic metric spaces, with distance functions $d_X$ and $d_Y$ respectively. When $X$ is understood, we write $d$ instead of $d_X$. Additionally, given $\lambda \geq 1$ and $\varepsilon \geq 0$, we will let $A \stackrel{\lambda,\varepsilon}{\asymp} B$ denote $\frac{1}{\lambda} A - \varepsilon \leq B \leq \lambda A + \varepsilon$.

We are primarily interested in studying the large scale or coarse geometry of geodesic metric spaces. We say a function $f\colon X \rightarrow Y$ is a \emph{$(\lambda,\varepsilon)-$quasi-isometric embedding} if  

\[  d_X(x,y)  \stackrel{\lambda,\varepsilon}{\asymp} d_Y(f(x),f(y)) \hspace{7pt} \text{ for all } x,y \in X.\] 
 If there  exists a map $\hat{f}\colon Y \rightarrow X$ and a constant $D\geq0$ such that $d_X(\hat{f} \circ f (x), x) \leq D$ for all $x \in X$ and $d_Y(f \circ \hat{f} (y), y) \leq D$ for all $y \in Y$, then we say that $\hat{f}$ is a \emph{quasi-inverse} of $f$.  If $f$ is a quasi-isometric embedding that has a quasi-inverse, then we say $f$ is a \emph{quasi-isometry} and that $(X,d_X)$ and $(Y,d_Y)$  are \emph{quasi-isometric}.

From the point of view of quasi-isometries, there is little distinction between a point and a set of uniformly bounded diameter.  Thus it is often convenient to utilize coarse maps. A \emph{coarse map} is a function $f\colon X \rightarrow 2^Y$ such that $f(x) \neq \varnothing$ and the diameter of $f(x)$ is uniformly bounded for all $x \in X$. We will let $f^{-1}(y)$ denote the set $\{x \in X : y \in f(x)\}$. A coarse map $f\colon X \rightarrow 2^Y$  induces a (non-canonical) function $\overline{f}\colon X \rightarrow Y$, by defining $\overline{f}(x)$ to be any choice of point in $f(x)$. 
 
 A $(\lambda, \varepsilon)$--\emph{quasi-geodesic} $\alpha$ in $X$ is a $(\lambda, \varepsilon)$--quasi-isometric embedding of a closed interval $I \subseteq \mathbb{R}$ into $X$.  If $I$ is compact, we say $\alpha$ is a \emph{finite} quasi-geodesic; otherwise, $\alpha$ is an \emph{infinite} quasi-geodesic. If $I$ has only one finite endpoint, then we say $\alpha$ is a \emph{quasi-geodesic ray}.

\subsection{Gromov boundary of hyperbolic spaces}

A central and fruitful class of spaces studied in coarse geometry are the hyperbolic spaces introduced by Gromov \cite{gromov1,gromov2}.

Let $X$ be a geodesic metric space and $\delta \geq 0$. A geodesic triangle in $X$ is \emph{$\delta$--slim} if each side is contained in the $\delta$--neighborhood of the union of the of the other two sides. We say $X$ is \emph{$\delta$--hyperbolic} if every triangle in $X$ is $\delta$--slim. 

If $X$ is $\delta$--hyperbolic, then there exists a quasi-isometry invariant boundary of $X$, called the \emph{Gromov boundary of $X$}, denoted by $\partial_G X$. To define $\partial_G X$, we need the notion of Gromov product. For  $x,y, p \in X$,  the \textit{Gromov product of $x$ and $y$ at $p$} is
\[(x,y)_p = \frac{1}{2}(d(x,p) + d(y,p) - d(x,y)).\]

We now define $\partial_G X$ to be the collection of equivalence classes of sequences $(x_n)$ in $X$ with the property that $\liminf \limits_{n,m\rightarrow \infty} (x_n, x_m)_p=\infty$ for some (any) $p \in X$, where two such sequences $(x_n)$ and $(y_m)$ are equivalent if $\liminf \limits_{n,m\rightarrow \infty } (x_n,y_m)_p =\infty$ for some (any) $p \in X$. 

\subsection{Morse boundary} 
In \cite{cordes}, Cordes introduced the Morse boundary for proper geodesic metric spaces. Cordes' work generalized both the Gromov boundary of hyperbolic spaces and the contracting boundary of CAT(0) spaces introduced by Charney and Sultan \cite{CS}. We will now recall Cordes' construction of the Morse  boundary and several of the results in \cite{cordes}.

Let $N\colon [1,\infty)  \times [0,\infty) \rightarrow [0,\infty)$ be a function. 
A quasi-geodesic $\gamma$ is \textit{$N$--Morse} if every $(\lambda, \varepsilon)$--quasi-geodesic with endpoints on $\gamma$ is contained in the $N(\lambda, \varepsilon)$--neighborhood of $\gamma$. We call $N$ a \textit{Morse gauge} for $\gamma$.

Let $X$ be a proper geodesic metric space. We say that two quasi-geodesic rays $\alpha$ and $\beta$ in $X$ are \textit{asymptotic} if the Hausdorff distance between the images of $\alpha$ and $\beta$, denoted by $d_{Haus}(\alpha, \beta)$, is finite. Choose a basepoint $p\in X$ and define $\partial_v X_p$ to be the set of equivalence classes of geodesic rays based at $p$, where two rays are equivalent if they are asymptotic. 
We topologize $\partial_v X_p$ by declaring  a subset $B$ of $\partial_vX_p$ to be closed if and only if the following statement is true. Suppose $(b_n)$ is a sequence in $B$, $(\beta_n)$ is a sequence of rays satisfying $[\beta_n]=b_n$, and $b \in \partial_vX_p$. If every subsequence of $(\beta_n)$ contains a subsequence that converges uniformly on compact sets to a ray in $b$, then $b$ is in $B$. 

Define $\M$ to be the collection of all Morse gauges for geodesic rays based at $p$. For each $N\in \M$, define 
\[\partial^N X_p=\{[\alpha]\in \partial_v X_p: \alpha \text{ is equivalent to an } N\text{--Morse geodesic ray based at } p \}.\]
Observe that because $\partial^N X_p$ is a subset of $\partial_vX_p$, we can equip it with the subspace topology, which we denote by $\mathcal{T}^N_v$. Cordes formulated a topology for  $\partial^NX_p$ slightly differently; however,  using Lemma \ref{Morse:end-points} and the  following lemma, it is a short exercise to verify that the two topologies are indeed the same.  (Our description of the topology allows us to prove Lemma \ref{lemma:convergence_in_strata} with ease.)

\begin{lemma}[\cite{cordes} Lemma 2.10]  \label{lemma:cordes_uniform_convergence} 
Let $X$ be a geodesic space, and let $\{\gamma_i\colon[0, \infty) \rightarrow X\}$ be a sequence of $N$--Morse geodesic rays that converge uniformly on compact sets to a geodesic ray $\gamma$. Then $\gamma$ is $N$--Morse. 
\end{lemma}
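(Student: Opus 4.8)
The plan is to verify directly that $\gamma$ satisfies the defining property of an $N$--Morse quasi-geodesic, transferring quasi-geodesic witnesses back and forth between $\gamma$ and the $\gamma_i$ and using the uniform convergence on compact sets to control the loss incurred in each transfer. So I would fix $\lambda\ge 1$, $\varepsilon\ge 0$, a $(\lambda,\varepsilon)$--quasi-geodesic $\sigma\colon[0,L]\to X$ with $\sigma(0)=\gamma(s)$ and $\sigma(L)=\gamma(t)$ on $\gamma$ (say $s\le t$), and a parameter $r\in[0,L]$; writing $z=\sigma(r)$, it then suffices to prove that $d(z,\gamma)\le N(\lambda,\varepsilon)$.

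First I would approximate the endpoints of $\sigma$ by points of $\gamma_i$. Since the $\gamma_i$ converge to $\gamma$ uniformly on the compact interval $[0,t]$, the quantities $\eta_i:=\max\{d(\gamma_i(s),\gamma(s)),\,d(\gamma_i(t),\gamma(t))\}$ tend to $0$. For each $i$ let $\sigma_i$ be the concatenation of a geodesic from $\gamma_i(s)$ to $\sigma(0)$, then $\sigma$, then a geodesic from $\sigma(L)$ to $\gamma_i(t)$; this is a quasi-geodesic with both endpoints on $\gamma_i$, and a routine estimate (adjoining a segment of length at most $\eta_i$ to either end of a $(\lambda,\varepsilon)$--quasi-geodesic yields a $(\lambda,\varepsilon+2\eta_i)$--quasi-geodesic) shows $\sigma_i$ is a $(\lambda,\varepsilon+4\eta_i)$--quasi-geodesic. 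Applying the hypothesis that $\gamma_i$ is $N$--Morse, and noting that $z$ lies on $\sigma\subseteq\sigma_i$, we get $d(z,\gamma_i)\le N(\lambda,\varepsilon+4\eta_i)$.

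Next I would push this estimate back onto $\gamma$. Choose $w_i=\gamma_i(u_i)$ with $d(z,w_i)\le N(\lambda,\varepsilon+4\eta_i)$. Then $u_i=d(\gamma_i(0),w_i)\le d(z,\gamma(0))+d(\gamma(0),\gamma_i(0))+N(\lambda,\varepsilon+4\eta_i)$, and for $i$ large (so $\eta_i\le 1$ and $\gamma_i(0)$ is within $1$ of $\gamma(0)$) the right side is at most $T:=d(z,\gamma(0))+2+N(\lambda,\varepsilon+4)$. Hence uniform convergence on $[0,T]$ gives $d(\gamma_i(u_i),\gamma(u_i))\le\sup_{u\in[0,T]}d(\gamma_i(u),\gamma(u))\to 0$, so $d(z,\gamma)\le N(\lambda,\varepsilon+4\eta_i)+\sup_{u\in[0,T]}d(\gamma_i(u),\gamma(u))$ for all large $i$; letting $i\to\infty$ yields $d(z,\gamma)\le N(\lambda,\varepsilon)$. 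Since $z$ and $\sigma$ were arbitrary, $\gamma$ is $N$--Morse.

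I expect the only delicate point to be the bookkeeping of Morse constants in this final limit: the argument literally produces $d(z,\gamma)\le N(\lambda,\varepsilon')$ for every $\varepsilon'>\varepsilon$, and concluding $d(z,\gamma)\le N(\lambda,\varepsilon)$ uses the standard normalization that the Morse gauge $N$ may be taken non-decreasing and continuous in each variable (and in any event the weaker statement already suffices for the stratified structure of the Morse boundary). Everything else is soft: no properness of $X$ is needed, consistent with the hypothesis that $X$ is merely geodesic, and the whole argument can equivalently be phrased as a proof by contradiction — assuming $d(z,\gamma)>N(\lambda,\varepsilon)$ and producing, for large $i$, a $(\lambda,\varepsilon+4\eta_i)$--quasi-geodesic with endpoints on $\gamma_i$ that escapes the $N(\lambda,\varepsilon+4\eta_i)$--neighborhood of $\gamma_i$.
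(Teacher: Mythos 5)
Your argument is correct and is essentially the argument behind the result as cited --- the paper itself does not prove this lemma but quotes it from Cordes (\cite{cordes}, Lemma 2.10), whose proof proceeds exactly as yours does: transfer the quasi-geodesic so its endpoints lie on $\gamma_i$ at the cost of $\varepsilon\mapsto\varepsilon+4\eta_i$, apply the $N$--Morse property of $\gamma_i$, and pass back to $\gamma$ via uniform convergence on a compact interval containing the relevant parameters (your bound $T$ on the $u_i$ is the right bookkeeping). The only caveat is the one you flag yourself: without normalizing the gauge to be non-decreasing and continuous, the limit only gives $d(z,\gamma)\le\inf_{\varepsilon'>\varepsilon}N(\lambda,\varepsilon')$; this normalization is standard in the Morse-boundary literature, and, as you note, the weaker conclusion already suffices for every use made of the lemma here (closedness of $\partial^N X_p$ in $\partial_v X_p$ and Lemma \ref{lemma:convergence_in_strata}).
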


There is a natural partial order on $\M$.  We define $N \leq N'$ if  $N(\lambda,\varepsilon)\leq N'(\lambda,\varepsilon)$ for all $(\lambda, \varepsilon) \in [1,\infty) \times [0,\infty)$. Observe that for $N \leq N'$, the inclusion $\partial^N X_p \rightarrow \partial^{N'} X_p$ is continuous. As in \cite{cordes}, we define the \textit{Morse boundary} of $X$ to be 
\[\partial_M X_p= {\varinjlim_{\M}} \  \partial^N X_p,\]
equipped with the direct limit topology. That is, $\partial_M X_p=\bigcup\limits_{N\in\M}\partial^N  X_p$ and is given the finest topology so that for each Morse gauge $N$, the inclusion $\partial^N X_p \rightarrow \partial_M X_p$ is continuous. 

The subspace topology on $\partial^N X_p$ as a subset of $\partial_M X_p$ may be coarser than $\mathcal{T}^N_v$. Thus, given a sequence $(a_n)$ in $\partial^N X_p$ and $a \in \partial_M X_p$ such that $a_n \rightarrow a$  in  $\partial_MX_p$, it is not immediately clear that $a_n \rightarrow a$ in $(\partial^N X_p, \mathcal{T}^N_v)$. However, this turns out to be true.

\begin{lemma} \label{lemma:convergence_in_strata} 
Let $(a_n)$ be a sequence  converging to $a$ in $\partial_M X_p$. Suppose there exists a gauge $N$ such that each $a_n \in \partial^N X_p$. Then $a_n \rightarrow a$ in $(\partial^N X_p, \mathcal{T}_v^N)$.
\end{lemma}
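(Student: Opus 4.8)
The plan is to reduce the statement to a subsequence criterion and then feed that criterion with an Arzel\`a--Ascoli argument. First I would record a consequence of the closed-set description of $\mathcal{T}^N_v$: if $(b_n)$ is a sequence in $\partial^N X_p$ with geodesic representatives $\beta_n$ based at $p$, and if every subsequence of $(\beta_n)$ has a further subsequence converging uniformly on compact sets to a geodesic ray in a class $b \in \partial^N X_p$, then $b_n \to b$ in $\mathcal{T}^N_v$. Indeed, otherwise some set $B$ that is closed in $\partial_v X_p$ with $b \notin B$ contains infinitely many $b_n$; restricting to that subsequence and applying the defining property of closed sets forces $b \in B$, a contradiction. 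In particular, to prove the lemma it suffices to show that every subsequence of $(a_n)$ admits a further subsequence whose $N$--Morse geodesic representatives converge uniformly on compact sets to a geodesic ray representing $a$; note this also yields $a \in \partial^N X_p$, which is part of the assertion.

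For the compactness input, fix a subsequence $(a_{n_k})$ and choose $N$--Morse geodesic rays $\alpha_{n_k}$ based at $p$ with $[\alpha_{n_k}] = a_{n_k}$. Since $X$ is proper these rays are $1$--Lipschitz with images in compact balls about $p$, so by Arzel\`a--Ascoli and a diagonal argument some further subsequence $(\alpha_{n_{k_j}})$ converges uniformly on compact sets to a geodesic ray $\gamma$ based at $p$. By Lemma \ref{lemma:cordes_uniform_convergence}, $\gamma$ is $N$--Morse, so $[\gamma] \in \partial^N X_p$.

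The remaining point is to identify $[\gamma]$ with $a$, and this is where the one external fact is used. Every subsequence of $(\alpha_{n_{k_j}})$ still converges uniformly on compact sets to $\gamma$, so the observation of the first paragraph gives $a_{n_{k_j}} \to [\gamma]$ in $(\partial^N X_p, \mathcal{T}^N_v)$; since the inclusion $\partial^N X_p \hookrightarrow \partial_M X_p$ is continuous, $a_{n_{k_j}} \to [\gamma]$ in $\partial_M X_p$ as well. But $(a_{n_{k_j}})$ is a subsequence of $(a_n)$, hence also converges to $a$ in $\partial_M X_p$. As $\partial_M X_p$ is Hausdorff \cite{cordes}, sequential limits there are unique, so $[\gamma] = a$. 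Thus $\gamma$ represents $a$ and $(\alpha_{n_{k_j}})$ converges uniformly on compact sets to it; since $(a_{n_k})$ was an arbitrary subsequence of $(a_n)$, the criterion of the first paragraph yields $a_n \to a$ in $(\partial^N X_p, \mathcal{T}^N_v)$.

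I expect the identification step to be the main obstacle: because the subspace topology that $\partial^N X_p$ inherits from $\partial_M X_p$ can be strictly coarser than $\mathcal{T}^N_v$, convergence in $\partial_M X_p$ does not by itself descend into the stratum. What rescues the argument is that the Arzel\`a--Ascoli limit is forced to be $N$--Morse by Lemma \ref{lemma:cordes_uniform_convergence}, while uniqueness of limits in the Hausdorff space $\partial_M X_p$ pins its class down to $a$; the rest is routine bookkeeping. (One could instead phrase the first two paragraphs as the assertion that $(\partial^N X_p,\mathcal{T}^N_v)$ is sequentially compact, but arguing directly with subsequences keeps the overhead minimal.)
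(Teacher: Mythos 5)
Your reduction to a subsequence criterion, the Arzel\`a--Ascoli extraction, and the use of Lemma \ref{lemma:cordes_uniform_convergence} to see that the limit ray $\gamma$ is $N$--Morse are all fine, and your sequential criterion for convergence in $(\partial^N X_p,\mathcal{T}^N_v)$ is a correct reading of the closed-set description of the topology. The genuine gap is the identification step $[\gamma]=a$: you deduce it from uniqueness of sequential limits in $\partial_M X_p$, citing Hausdorffness of the Morse boundary from \cite{cordes}. That fact is not available. Cordes establishes Hausdorffness only stratum by stratum, i.e.\ for each $(\partial^N X_p,\mathcal{T}^N_v)$ (Proposition \ref{prop:fundneigh}), not for the direct limit $\partial_M X_p$; and this paper states explicitly, in the proof of Theorem \ref{thrm:main_body_part2}, that it is unknown whether the Morse boundary with the direct limit topology is Hausdorff. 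Indeed the entire purpose of Lemma \ref{lemma:convergence_in_strata} is to transfer a convergence statement from the possibly non-Hausdorff direct limit into a stratum where Hausdorffness is known, so assuming Hausdorffness of $\partial_M X_p$ here is precisely the move the lemma exists to avoid. Replacing that step by uniqueness of limits in $\partial_v X_p$ would not rescue the argument either, since Hausdorffness of $\partial_v X_p$ is likewise not established.

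The paper's proof sidesteps this entirely, and also renders the compactness machinery unnecessary: by the universal property of the direct limit, the inclusion $\partial_M X_p \rightarrow \partial_v X_p$ is continuous, so $a_n \rightarrow a$ in $\partial_v X_p$; Lemmas \ref{lemma:cordes_uniform_convergence} and \ref{Morse:end-points} show that $\partial^N X_p$ is closed in $\partial_v X_p$, whence $a \in \partial^N X_p$; and since $\mathcal{T}^N_v$ is by definition the subspace topology induced from $\partial_v X_p$, convergence in $\partial_v X_p$ of a sequence lying in $\partial^N X_p$ to a point of $\partial^N X_p$ is exactly convergence in $(\partial^N X_p,\mathcal{T}^N_v)$. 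If you want to keep your Arzel\`a--Ascoli framework, you would still need an argument of this kind (not a Hausdorffness assumption on the direct limit) to pin $[\gamma]$ down to $a$, at which point the extraction of convergent subsequences is doing no real work.
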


\begin{proof} 
The universal property of direct limits implies that the inclusion $\partial_M X_p \rightarrow \partial_v X_p$  is continuous. Assume $a_n \rightarrow a$ in $\partial_M X_p$. Continuity implies that $a_n \rightarrow a$ in $\partial_v X_p$. Lemmas \ref{lemma:cordes_uniform_convergence} and \ref{Morse:end-points} imply that $\partial^NX_p$ is a closed subset of $\partial_vX_p$.  Thus, it must be that $a \in \partial^NX_p$. So, by  definition of  the subspace topology,  $a_n \rightarrow a$ in $(\partial^N X_p, \mathcal{T}_v^N)$ as desired. 
\end{proof} 

We now recall several propositions of Cordes, the first of which will be useful for determining convergence in $(\partial^N X_p, \mathcal{T}_v^N)$. 

\begin{proposition}[\cite{cordes} Lemma 3.1] \label{prop:fundneigh} There exists an increasing function $\eta: \M \rightarrow [0,\infty)$ with unbounded image such that for every $N$--Morse geodesic ray $\gamma$ based at $p$, the collection  $\{V_{n}(\gamma)\}$ is a fundamental system of neighborhoods of $[\gamma]$ in $(\partial^N X_p, \T_v^N)$, where   
\[V_{n}(\gamma)=\{[\alpha] \in \partial^N X_p: d(\alpha(t),\gamma(t)) \leq \eta(N) \text{ for all } t \leq n \} .\] 
In particular, the space $(\partial^N X_p, \mathcal{T}^N_v)$ is Hausdorff for each Morse gauge $N$. 
\end{proposition}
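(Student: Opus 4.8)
The plan is to separate the one genuinely geometric ingredient from what is otherwise routine point-set bookkeeping. The geometric ingredient is a \emph{uniform fellow-traveling} estimate: there is an increasing function $\mu\colon\M\to[0,\infty)$ so that any two $N$--Morse geodesic rays $\alpha,\beta$ based at $p$ with $[\alpha]=[\beta]$ satisfy $d(\alpha(t),\beta(t))\le\mu(N)$ for all $t\ge 0$. This is the Morse-boundary analogue, with ``effective hyperbolicity constant'' controlled by $N$, of the elementary fact that in a $\delta$--hyperbolic space two asymptotic geodesic rays from a common point stay uniformly close; I would deduce it from the standard divergence properties of Morse geodesics (and from Lemma~\ref{Morse:end-points}, which also lets me assume that every class in $\partial^N X_p$ has an $N$--Morse geodesic representative based at $p$, so that the sets $V_n(\gamma)$ may be tested on such representatives). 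Granting this, put $\eta(N):=\mu(N)+1$; after replacing $\eta$ by a larger function we may also assume it is increasing with unbounded image, which is possible because $\M$ is up-closed and hence unbounded in the poset of all Morse gauges. It is then immediate that $[\gamma]\in V_n(\gamma)$ and $V_{n+1}(\gamma)\subseteq V_n(\gamma)$, so the content is to show (i) each $V_n(\gamma)$ is a neighborhood of $[\gamma]$ in $\T_v^N$, and (ii) every neighborhood of $[\gamma]$ contains some $V_n(\gamma)$.

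For (i) I would show that no sequence in $\partial^N X_p\setminus V_n(\gamma)$ converges to $[\gamma]$; since the topology on $\partial_v X_p$ is generated by a convergence whose associated closure operator is idempotent (a short diagonalization), this is enough. So suppose $[\alpha_k]\to[\gamma]$ with $[\alpha_k]\notin V_n(\gamma)$, and pick $N$--Morse representatives $\alpha_k$ based at $p$; then for each $k$ there is $t_k\le n$ with $d(\alpha_k(t_k),\gamma(t_k))>\eta(N)$. By properness of $X$ and the Arzel\`a--Ascoli theorem, after passing to a subsequence $\alpha_k\to\alpha_\infty$ uniformly on compacta with $[\alpha_\infty]=[\gamma]$, and we may also assume $t_k\to t_\infty\in[0,n]$. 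By Lemma~\ref{lemma:cordes_uniform_convergence} the ray $\alpha_\infty$ is $N$--Morse, so the fellow-traveling estimate gives $d(\alpha_\infty(t),\gamma(t))\le\mu(N)$ for all $t$; but uniform convergence on $[0,n]$ together with $t_k\to t_\infty$ forces $d(\alpha_\infty(t_\infty),\gamma(t_\infty))\ge\eta(N)=\mu(N)+1$, a contradiction.

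For (ii), suppose a neighborhood $U$ of $[\gamma]$ contains no $V_n(\gamma)$, and for each $n$ pick $[\alpha_n]\in V_n(\gamma)\setminus U$ together with an $N$--Morse representative $\alpha_n$ based at $p$ with $d(\alpha_n(t),\gamma(t))\le\eta(N)$ for all $t\le n$. Along any subsequence, Arzel\`a--Ascoli produces a further subsequence with $\alpha_n\to\alpha_\infty$ uniformly on compacta; holding $t$ fixed and letting $n\to\infty$ gives $d(\alpha_\infty(t),\gamma(t))\le\eta(N)$ for all $t\ge 0$, hence $d_{Haus}(\alpha_\infty,\gamma)<\infty$ and $[\alpha_\infty]=[\gamma]$. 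Thus $[\alpha_n]\to[\gamma]$ in $\partial_v X_p$, so in $\T_v^N$, contradicting that the $[\alpha_n]$ lie in the closed set $\partial^N X_p\setminus U$. Finally, Hausdorffness drops out: if $[\gamma]\neq[\alpha]$ in $\partial^N X_p$ then $\gamma$ and $\alpha$ are non-asymptotic geodesic rays from $p$, so $\sup_t d(\gamma(t),\alpha(t))=\infty$ and one can pick $n_0$ with $d(\gamma(n_0),\alpha(n_0))>3\eta(N)$; the triangle inequality and the fellow-traveling bound then give $V_{n_0}(\gamma)\cap V_{n_0}(\alpha)=\varnothing$, and by (i) the interiors of these sets are disjoint open neighborhoods of $[\gamma]$ and $[\alpha]$.

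The main obstacle is the fellow-traveling estimate of the first paragraph: the bound on the distance between asymptotic $N$--Morse rays from $p$ must depend only on $N$, not on the particular pair of rays, and this uniformity is exactly where one must genuinely use the Morse property (via divergence) rather than any soft argument. A secondary, purely topological point that should be checked is that Cordes's convergence has an idempotent closure operator, so that the ``no convergent sequence'' reasoning in (i) really certifies the neighborhood claim.
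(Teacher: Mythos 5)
The paper does not actually prove this proposition; it is quoted from Cordes (\cite{cordes}, Lemma 3.1), and the paper only remarks that identifying Cordes's topology with the subspace topology $\T_v^N$ used here is ``a short exercise'' via Lemmas \ref{Morse:end-points} and \ref{lemma:cordes_uniform_convergence}. So there is no internal proof to compare against; your proposal is in effect a reconstruction of Cordes's lemma together with that identification, and its skeleton is sound: the uniform fellow-traveling estimate you isolate (any two asymptotic geodesic rays from $p$ representing a class in $\partial^N X_p$ stay within $\mu(N)$ of each other pointwise) does follow from Lemmas \ref{var:cordes}/\ref{Morse:end-points} exactly as you say, your argument for (ii) and for Hausdorffness is correct as written, and the Arzel\`a--Ascoli/properness mechanism is the right engine throughout. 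Two points need more care than you give them. First, in (i) the step ``after passing to a subsequence $\alpha_k\to\alpha_\infty$ uniformly on compacta with $[\alpha_\infty]=[\gamma]$'' is only justified if ``$[\alpha_k]\to[\gamma]$'' means the defining convergence of the topology (every subsequence of representatives has a sub-subsequence converging u.o.c. to a ray in $[\gamma]$); if it meant topological convergence in $\T_v^N$, the claim $[\alpha_\infty]=[\gamma]$ would be circular, since separating $[\alpha_\infty]$ from $[\gamma]$ is precisely what the neighborhood basis is for. Moreover, since closedness in $\partial_v X_p$ is tested against \emph{arbitrary} choices of representatives, you must also handle non-$N$-Morse representatives of classes in $\partial^N X_p$; this is fine, but it uses Corollary \ref{cor:Morse_end-points} (a uniform gauge $N_1=N_1(N)$ for all geodesic representatives) together with the fellow-traveling bound, and it is worth saying. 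Second, the idempotency of the sequential closure operator, which you correctly identify as the hinge of (i), is not a purely formal diagonalization: the convergence is defined through chosen representatives, and the diagonal argument goes through only because, within a fixed stratum, different representatives of the same class fellow-travel uniformly (again \ref{cor:Morse_end-points} plus the $\mu(N)$ bound) and the space of geodesic rays from $p$ is compact metrizable by properness. So the gap you flagged is genuinely fillable, but filling it consumes the same geometric input as the rest of the proof rather than being soft point-set topology. By contrast, Cordes's own route essentially takes the sets $V_n(\gamma)$ as defining a neighborhood filter and verifies the neighborhood-basis axioms directly, so his Lemma 3.1 is closer to a definition-check; your route buys a proof adapted to the paper's $\T_v^N$, i.e., it simultaneously supplies the ``short exercise'' the paper leaves implicit.
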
 

For any pair of points $a,b \in \partial_M X_p$,  a bi-infinite quasi-geodesic $\gamma$ in $X$  \emph{from $a$ to $b$} is one with the property that $\gamma$ restricted to $(-\infty, 0]$ is asymptotic to a representative of $a$ and $\gamma$ restricted to $[0,\infty)$ is asymptotic to a representative of $b$. 

\begin{proposition}[\cite{cordes} Proposition 3.11]\label{prop:cordes_visual}
Let $X$ be a proper geodesic space and $a,b \in \partial^N X_p$.  There exists a bi-infinite Morse geodesic from $a$ to $b$, whose gauge depends only on $N$. 
\end{proposition}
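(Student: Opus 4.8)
The plan is to build the bi-infinite geodesic as a limit of segments joining far-out points on two $N$--Morse rays, and to pin down its Morse gauge using the Morse estimates of Section~2. Fix the basepoint $p$ and choose $N$--Morse geodesic rays $\alpha,\beta\colon[0,\infty)\to X$ based at $p$ with $[\alpha]=a$ and $[\beta]=b$; such representatives exist by the very definition of $\partial^N X_p$. We may assume $a\neq b$, so $\alpha$ and $\beta$ are not asymptotic. For each $n$ pick a geodesic $\sigma_n$ from $\alpha(n)$ to $\beta(n)$. We will reparametrize the $\sigma_n$ so that they are anchored near $p$, extract an Arzel\`a--Ascoli limit, and verify it has the required properties.

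The key step is to show that every $\sigma_n$ meets a fixed bounded ball about $p$. Here one invokes the hyperbolic-like behaviour of configurations of Morse geodesics: the geodesic triangle with vertices $p,\alpha(n),\beta(n)$ and sides $\alpha|_{[0,n]}$, $\beta|_{[0,n]}$, $\sigma_n$ is $\delta$--slim for some $\delta=\delta(N)$. Slimness and connectedness of $\sigma_n$ then yield a point $c_n\in\sigma_n$ within $\delta$ of both $\alpha$ and $\beta$, say within $\delta$ of $\alpha(u_n)$ and $\beta(v_n)$ with $u_n,v_n\le n$, so $d(\alpha(u_n),\beta(v_n))\le 2\delta$. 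Since $\alpha$ and $\beta$ are non-asymptotic $N$--Morse rays from $p$, they cannot come within $2\delta$ of one another at arbitrarily large parameter --- two $N$--Morse rays from a common point with $2\delta$--close endpoints uniformly fellow travel, so unbounded such near-approaches would make them asymptotic. Hence $u_n$ is bounded independently of $n$, and $d(p,c_n)\le R$ for some $R=R(\alpha,\beta,N)$.

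The rest is formal. Reparametrize $\sigma_n$ by arc length with $\sigma_n(0)=c_n$, $\sigma_n(-a_n)=\alpha(n)$, $\sigma_n(b_n)=\beta(n)$; then $a_n,b_n\ge n-R\to\infty$, while every $c_n$ lies in the compact set $\overline{B}(p,R)$. A diagonal Arzel\`a--Ascoli argument (properness of $X$) gives a subsequence converging uniformly on compact sets to a bi-infinite geodesic $\gamma\colon\mathbb{R}\to X$. Moreover each $\sigma_n$ is $N^{*}$--Morse for a gauge $N^{*}=N^{*}(N)$ --- again from the Morse estimates of Section~2, since $\sigma_n$ is a geodesic with endpoints on the $N$--Morse rays $\alpha,\beta$ in the slim configuration above --- so Lemma~\ref{lemma:cordes_uniform_convergence} shows $\gamma$ is $N^{*}$--Morse, with $N^{*}$ depending only on $N$. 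To see $\gamma$ runs from $a$ to $b$, note that $\gamma|_{[0,\infty)}$ is the limit of the geodesics $\sigma_n|_{[0,b_n]}$, each joining $c_n\in\overline{B}(p,R)$ to $\beta(n)$ and hence staying uniformly close to $\beta$ with a constant depending only on $N$ and $R$; passing to the limit gives $d_{Haus}(\gamma|_{[0,\infty)},\beta)<\infty$, so $\gamma|_{[0,\infty)}$ represents $b$, and symmetrically $\gamma|_{(-\infty,0]}$ represents $a$.

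The hard part is thus the slimness of the triangle on $\{p,\alpha(n),\beta(n)\}$ together with the uniform Morse-ness of the $\sigma_n$, both with constants governed only by $N$: this is the ``hyperbolic-like'' content of the Morse condition, and the only place where properness is genuinely used. In a hyperbolic space it is immediate; in general it is supplied by the Morse-geodesic technology of Section~2, after which Arzel\`a--Ascoli, Lemma~\ref{lemma:cordes_uniform_convergence}, and routine fellow-traveling estimates finish the argument.
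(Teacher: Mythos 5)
This proposition is not proved in the paper at all; it is quoted from Cordes (\cite{cordes}, Proposition 3.11), and your argument is essentially Cordes' original one: anchor geodesic segments $\sigma_n$ from $\alpha(n)$ to $\beta(n)$ in a fixed ball via slimness of Morse triangles (Lemma \ref{cordes:Morse:geodesics}) and non-asymptoticity of $\alpha,\beta$, extract a limit by Arzel\`a--Ascoli using properness, and inherit a gauge $N^{*}(N)$ in the limit. The argument is correct; the only minor points are that Lemma \ref{lemma:cordes_uniform_convergence} is stated for rays, so you need its routine extension to finite geodesics converging to a bi-infinite geodesic, and that asymptoticity of $\gamma|_{[0,\infty)}$ to $\beta$ requires both containments of the Hausdorff distance---your one-sided fellow-traveling bound plus the standard coarse-surjectivity observation closes this.
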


\begin{proposition}[\cite{cordes} Proposition 3.7] \label{prop:cordes_inducedmap}
Let $f: X \rightarrow Y$ be a $(\lambda, \varepsilon)$--quasi-isometry of proper geodesic spaces. Then for each $N$--Morse geodesic ray $\gamma$ based at $p\in X$, $f(\gamma)$ stays bounded distance from an $N'$--Morse geodesic ray based at $f(p)$, say $\ol{f(\gamma)}$, where $N'$ depends only on $N, \lambda$, and $\varepsilon$. This allows us to build a homeomorphism $f_*: \partial_M X_{p} \rightarrow \partial_M Y_{f(p)}$ by defining $f_*([\gamma]) =[\ol{f(\gamma)}]$. 
\end{proposition}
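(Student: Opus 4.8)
The plan is to show first that quasi-isometries send Morse geodesic rays, up to bounded Hausdorff distance, to Morse geodesic rays with a controlled gauge, and then to verify that the resulting assignment on equivalence classes is a well-defined bijection that is continuous in both directions. For the first part, let $\gamma$ be an $N$--Morse geodesic ray based at $p$ and let $f\colon X \to Y$ be a $(\lambda,\varepsilon)$--quasi-isometry. The image $f(\gamma)$ is a $(\lambda, \varepsilon)$--quasi-geodesic ray in $Y$ based at $f(p)$. The key observation is that the Morse property is a quasi-isometry invariant with controlled dependence of constants: if $\alpha$ is a $(\lambda',\varepsilon')$--quasi-geodesic in $Y$ with endpoints on $f(\gamma)$, then $\hat f(\alpha)$ is a $(\lambda'',\varepsilon'')$--quasi-geodesic in $X$ with endpoints within $D$ of $\gamma$ (where $\lambda'', \varepsilon''$ depend only on $\lambda, \varepsilon, \lambda', \varepsilon'$), hence lies in the $N(\lambda'',\varepsilon'') + C$--neighborhood of $\gamma$ for a constant $C$ absorbing the basepoint discrepancy; pushing forward by $f$ shows $\alpha$ lies in a controlled neighborhood of $f(\gamma)$. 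This establishes that $f(\gamma)$ is $N_0$--Morse as a quasi-geodesic, where $N_0 = N_0(N,\lambda,\varepsilon)$. One then invokes the standard fact (Morse lemma / stability, which holds in this generality for Morse quasi-geodesics) that an $N_0$--Morse quasi-geodesic ray lies within Hausdorff distance $R(N_0,\lambda,\varepsilon)$ of a geodesic ray $\ol{f(\gamma)}$ based at $f(p)$, and that this geodesic is $N'$--Morse with $N'$ depending only on $N_0$ and the constants — hence only on $N,\lambda,\varepsilon$.

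Next I would check that $f_*$ is well-defined on equivalence classes: if $\gamma_1, \gamma_2$ are asymptotic $N$--Morse rays from $p$, then $f(\gamma_1), f(\gamma_2)$ are within finite Hausdorff distance (quasi-isometries distort Hausdorff distance boundedly), so $\ol{f(\gamma_1)}$ and $\ol{f(\gamma_2)}$ are asymptotic, giving $[\ol{f(\gamma_1)}] = [\ol{f(\gamma_2)}]$. Applying the same construction to a quasi-inverse $\hat f$ yields $\hat f_*$, and a Hausdorff-distance estimate shows $\hat f_* \circ f_* = \mathrm{id}$ and $f_* \circ \hat f_* = \mathrm{id}$, so $f_*$ is a bijection with inverse $\hat f_*$. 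It remains to prove continuity of $f_*$. Because the Morse boundary carries the direct limit topology, it suffices to show that for each gauge $N$ the composite $\partial^N X_p \to \partial_M X_p \xrightarrow{f_*} \partial_M Y_{f(p)}$ is continuous, and since $f_*(\partial^N X_p) \subseteq \partial^{N'} Y_{f(p)}$ it suffices to show $\partial^N X_p \to \partial^{N'} Y_{f(p)}$ is continuous with respect to the topologies $\mathcal T^N_v$ and $\mathcal T^{N'}_v$. Here I would use the fundamental neighborhood systems $V_n(\gamma)$ from Proposition \ref{prop:fundneigh}: if $[\alpha_k] \to [\gamma]$ in $(\partial^N X_p, \mathcal T^N_v)$, then $\alpha_k$ and $\gamma$ agree within $\eta(N)$ on longer and longer initial segments; applying $f$ (which is coarsely Lipschitz) and then passing to the geodesic representatives $\ol{f(\alpha_k)}, \ol{f(\gamma)}$ (each within bounded Hausdorff distance of the image quasi-geodesics), one shows $\ol{f(\alpha_k)}$ eventually lies in $V_n(\ol{f(\gamma)})$ for each fixed $n$, i.e. $f_*([\alpha_k]) \to f_*([\gamma])$. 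Continuity of $f_*^{-1} = \hat f_*$ follows by symmetry, so $f_*$ is a homeomorphism.

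The main obstacle will be organizing the web of constant dependencies so that nothing secretly depends on the basepoint or on the particular geodesic $\gamma$ beyond its gauge: one must track carefully that the gauge $N'$ of $\ol{f(\gamma)}$, the Hausdorff-distance bound $R$ between $f(\gamma)$ and $\ol{f(\gamma)}$, and the neighborhood-index bookkeeping in the continuity argument all depend only on $N, \lambda, \varepsilon$. A secondary subtlety is the shift of basepoint — $f(\gamma)$ and the chosen geodesic representative $\ol{f(\gamma)}$ are both based at $f(p)$, but intermediate quasi-geodesics obtained by pushing arcs back and forth through $f$ and $\hat f$ need not be, so one repeatedly absorbs an additive error of size $2D$ (the quasi-inverse constant) into the neighborhood constants; keeping these absorptions uniform is the real content behind the phrase ``$N'$ depends only on $N,\lambda,\varepsilon$.''
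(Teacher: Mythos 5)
The paper does not actually prove this proposition --- it is quoted as a known result of Cordes (\cite{cordes}, Proposition 3.7) --- so there is no in-paper argument to compare with; your outline is essentially Cordes' original strategy (pull quasi-geodesics back through a quasi-inverse to transfer the Morse property, replace $f(\gamma)$ by a geodesic representative, then check well-definedness, bijectivity via $\hat f_*$, and continuity stratum-by-stratum using the direct limit topology), and it is sound. Two steps deserve explicit care rather than an appeal to ``standard facts'': the existence of the geodesic ray $\ol{f(\gamma)}$ based at $f(p)$ at bounded Hausdorff distance from the Morse quasi-geodesic $f(\gamma)$ is not the classical Morse lemma but an Arzel\`a--Ascoli limit of geodesic segments from $f(p)$ to points $f(\gamma(n))$, which is precisely where properness of $Y$ is used; and in the continuity argument, knowing that $\ol{f(\alpha_k)}$ and $\ol{f(\gamma)}$ are within some constant $C(N,\lambda,\varepsilon)$ on long initial segments does not by itself put $[\ol{f(\alpha_k)}]$ in $V_n(\ol{f(\gamma)})$, since membership requires distance at most $\eta(N')$; you need the standard improvement lemma for Morse geodesic rays with a common basepoint (closeness at a point far from the basepoint forces $\eta(N')$-closeness on a slightly shorter initial segment) to land in the fundamental neighborhoods of Proposition \ref{prop:fundneigh}.
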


Proposition \ref{prop:cordes_inducedmap} shows that changing the basepoint $p$ to $p'$ produces a homeomorphic boundary via a canonical homeomorphism induced by the quasi-isometry sending $p$ to $p'$ and fixing all other points. Thus, throughout the remainder of this paper,  $\partial X$ will denote the Morse boundary $\partial_M X_p$.  If $X$ is hyperbolic, then there exists a Morse gauge $N$ such that every geodesic is $N$--Morse.  It is then readily seen that the Gromov and Morse boundaries are homeomorphic, and thus we will also use $\partial X$ to denote $\partial_G X$.

\subsection{Hierarchically hyperbolic spaces and groups} 
Introduced by Behrstock, Hagen, and Sisto in \cite{BHSI} and revised in \cite{BHSII}, hierarchically hyperbolic spaces generalize the Masur--Minsky hierarchy machinery  for the mapping class group \cite{MMI,MMII}. The definition of a hierarchically hyperbolic space is quite involved and since we will not need the full breath of the definition, we  recall only the relevant facts and features we will need for Section \ref{sec:HHS_smallcross}.  We direct the curious reader to \cite{BHSII} for the complete definition and to \cite{HHS_survey} for a survey of the current state of the theory.

Let $\mc{X}$ be a geodesic metric space. 
A hierarchically hyperbolic space (HHS) structure on $\mc{X}$ begins with a partially ordered index set $\mf{S}$ with a unique maximal element, which we will always denote by $S$. For each $U \in \mf{S}$, there exists a hyperbolic space $CU$ and a uniformly coarsely Lipschitz projection map $\pi_U: \mc{X} \rightarrow CU$.  Additionally, many axioms govern how $\mf{S}$, $\{CU\}_{U\in\mf{S}}$, and $\{\pi_U\}_{U\in\mf{S}}$ are related. When $\mc{X}$ is equipped with an HHS structure, we call $\mc{X}$ a \textit{hierarchically hyperbolic space} (HHS), and  we often write $(\mc{X},\mf{S})$ to indicate that $\mc{X}$ is equipped with a particular HHS structure with index set $\mf{S}$.

A finitely generated group $G$ is a \textit{hierarchically hyperbolic group} (HHG) if $G$ admits a proper, cobounded, uniform quasi-action on some HHS $(\mc{X},\mf{S})$, and $G$ acts on $\mf{S}$ with finitely many orbits by order preserving automorphisms. Additionally, the quasi-action must preserve some aspects of the HHS structure. If $G$ is an HHG, by virtue of its quasi-action on $(\mc{X},\mf{S})$,  there exists a quasi-isometry $f \colon G \rightarrow \mc{X}$, where $G$ is equipped with the word metric with respect to any finite generating set. We can thus put an HHS structure on $G$ using the index set $\mf{S}$, hyperbolic spaces $\{CU\}_{U\in\mf{S}}$, and projection maps $\{\pi_U \circ f\}_{U\in \mf{S}}$.

\begin{example}\label{ex:HHS_examples}
The following  admit HHS structures: mapping class groups and Teichm\"uller spaces (with either the Weil-Petersson or Teichm\"uller metric) of finite type surfaces \cite{MMI}, \cite{MMII}, \cite{Brock}, \cite{Behrstock}, \cite{Rafi}, \cite{BKMM}, \cite{Durham}, \cite{EMR}, CAT(0) cubical groups \cite{HS}, right-angled Artin and Coxeter groups \cite{BHSI}, fundamental groups of 3-manifolds without Nil or Sol components in their prime decomposition \cite{BHSII}, and the non-separating curve graph \cite{vokes}.  Further, mapping class groups and CAT(0) cubical groups are HHGs. 
\end{example}

In \cite{ABD}, Abbott, Behrstock, and Durham introduced \textit{almost HHS structures}, a slight weakening of the HHS axioms. Almost HHS structures maintain nearly all of the features of an HHS structure including the partially ordered index set $\mf{S}$ with a unique maximal element $S$ and the associated hyperbolic spaces and projection maps for each $U \in \mf{S}$. 

The crux of HHS theory is that the coarse geometry of $\mc{X}$ can be recovered from the geometry of the associated spaces (whose geometry is in turn elucidated by hyperbolicity). This philosophy is displayed most predominately by the following hallmark theorem.

\begin{theorem}[\cite{BHSII} Theorem 4.5, \cite{ABD} Thoerem 7.2 ]\label{dist:formula}
Let $(\mc{X}, \mf{S})$ be an almost HHS.  For each $\sigma$ sufficiently large, there exist $A\geq 1$ and $B\geq 0$ such that 
\[d_{\mc{X}} (x,y) \stackrel{A,B}{\asymp} \sum\limits_{U\in\mf{S}} [d_{CU}( \pi_U(x), \pi_U(y) )]_{\sigma}  \hspace{20pt} \text{for all} \hspace{7pt}  x,y \in \mc{X},\]
where $[ M ]_\sigma = M$ if $M\geq \sigma$ and $0$ otherwise.
\end{theorem}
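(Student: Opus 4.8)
\noindent\textit{Proof proposal.} This is the hierarchically hyperbolic distance formula of Behrstock--Hagen--Sisto, which generalizes the Masur--Minsky distance formula for mapping class groups, so in the paper it is simply quoted; I only sketch the structure of an argument. Fix an (almost) HHS $(\mc{X},\mf{S})$ and induct on the \emph{complexity} of $\mf{S}$, namely the length of the longest $\propnest$--chain. In the base case $\mf{S}=\{S\}$ the projection $\pi_S\colon \mc{X}\to CS$ is a quasi-isometry and the formula is immediate. For the inductive step I would establish the two inequalities separately, choosing $\sigma$ at the outset to exceed every structure constant (the coarse Lipschitz constants, the consistency and bounded geodesic image thresholds, the large links constants), so that all error terms can later be absorbed into the additive constant.

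For the inequality $\sum_{U\in\mf{S}}[\,d_{CU}(\pi_U(x),\pi_U(y))\,]_\sigma \preceq d_{\mc{X}}(x,y)$, fix a geodesic $\gamma$ from $x$ to $y$. Since each $\pi_U$ is uniformly coarsely Lipschitz, $d_{CU}(\pi_U(x),\pi_U(y))\preceq d_{\mc{X}}(x,y)$ for every single $U$; the real content is that the \emph{sum} over domains whose projection exceeds $\sigma$ is still linear in $d_{\mc{X}}(x,y)$. To each such $U$ one attaches the subinterval of $\gamma$ on which $\pi_U$ does most of its travelling; the consistency (Behrstock) inequality together with bounded geodesic image forces the subintervals attached to pairwise transverse domains to be coarsely disjoint, while the large links axiom (the passing-up lemma) lets one trade a large family of $\propnest$--incomparable domains carrying large projections for a single $\propnest$--larger domain carrying comparable projection. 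An induction on complexity then collapses the sum to a quantity linear in the length of $\gamma$, which is $d_{\mc{X}}(x,y)$.

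For the reverse inequality $d_{\mc{X}}(x,y)\preceq \sum_{U\in\mf{S}}[\,d_{CU}(\pi_U(x),\pi_U(y))\,]_\sigma$, which is the harder direction, I would exhibit an explicit path from $x$ to $y$ of length bounded by the right-hand side. Begin with a geodesic in $CS$ from $\pi_S(x)$ to $\pi_S(y)$, subdivided into unit steps. Using the realization theorem---every sufficiently consistent tuple of coordinates is the image, up to uniformly bounded error, of a point of $\mc{X}$---together with the nested sub-HHSs $\mf{S}_U=\{V\in\mf{S} : V\nest U\}$, each of strictly smaller complexity, and the standard product regions they span, one lifts this $CS$--geodesic to a concatenation in $\mc{X}$ whose consecutive stages either move the $CS$--coordinate by a bounded amount or remain inside one such sub-HHS. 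Applying the inductive hypothesis inside each sub-HHS, the lengths telescope and give a bound of the form $d_{CS}(\pi_S(x),\pi_S(y)) + \sum_{V\propnest S}[\,d_{CV}(\pi_V(x),\pi_V(y))\,]_\sigma + (\text{error})$; because $\sigma$ was chosen large, the accumulated realization error stays linear, and a uniform choice of $\sigma$ keeps the constants from growing with complexity.

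The main obstacle is precisely this second inequality: it requires constructing the hierarchy paths, which in turn uses the full realization machinery and a careful treatment of orthogonality via the product regions, and it requires running the complexity induction while keeping both the multiplicative and additive constants uniform---this uniformity is the reason the statement only claims a formula for each \emph{sufficiently large} $\sigma$. Finally, the ``almost HHS'' refinement of \cite{ABD} follows by checking that none of the steps above invokes the full strength of the HHS axiom weakened there, so the same argument applies with the surviving axioms.
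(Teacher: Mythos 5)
This theorem is not proved in the paper at all: it is quoted verbatim from \cite{BHSII} (Theorem 4.5) and \cite{ABD} (Theorem 7.2), so there is no in-paper argument to compare yours against. Your sketch does faithfully reflect the strategy of the cited proofs---induction on complexity, the passing-up/large links and consistency machinery for the direction bounding the sum by $d_{\mc{X}}(x,y)$, and the realization theorem plus hierarchy-path construction for the harder reverse inequality, with $\sigma$ taken large to keep constants uniform---but it remains an outline whose deep inputs (realization, the passing-up lemma, and the uniformity of constants through the induction) are invoked as black boxes, which is acceptable only because the result is a citation rather than something this paper establishes.
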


\subsection{Morse quasi-geodesics in hierarchically hyperbolic spaces} 

In \cite{ABD}, for HHSs satisfying the bounded domain dichotomy, a characterization of Morse quasi-geodesics is given in terms of their projections to the associated hyperbolic spaces.  The bounded domain dichotomy, defined in \cite{ABD},  is a technical condition satisfied by many HHSs including those from Example \ref{ex:HHS_examples}. As pointed out in \cite{ABD}, all HHGs satisfy the bounded domain dichotomy.

\begin{theorem}[\cite{ABD} Theorem 6.2, 7.2]\label{thrm:HHS_Stability}
Let $\mc{X}$ be a geodesic metric space that admits an HHS structure with the bounded domain dichotomy.  Then there exists an almost HHS structure $\mf{S}$ for $\mc{X}$ such that for every quasi-geodesic $\gamma$ in $\mc{X}$ the following are equivalent: 
\begin{enumerate}
\item $\gamma$ is $N$--Morse. 
\item There exists $B$ such that  $\diam_{CU}(\pi_U(\gamma)) \leq B$ for all $U\in\mf{S} - \{S\}$.
\item $\pi_S \circ \gamma$ is an $(L,L)$--quasi-geodesic. 
\end{enumerate} Further $N, B$, and $L$ determine each other.
\end{theorem}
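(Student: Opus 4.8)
The strategy is to first replace the given HHS structure by one that is \emph{designed} to detect Morseness, and then to extract the three equivalences from the distance formula (Theorem~\ref{dist:formula}) together with the bounded geodesic image axiom and the theory of hierarchy paths. The construction of that structure — and checking it is an almost HHS — is the step I expect to be the main obstacle. Using the bounded domain dichotomy, one first discards every domain whose associated space is uniformly bounded; such domains contribute nothing to Theorem~\ref{dist:formula} once the threshold $\sigma$ exceeds their common bound. One then keeps $S$ together with exactly those $U\sqsubsetneq S$ for which the standard product region $P_U\cong F_U\times E_U$ is a \emph{genuine} product, i.e.\ both factors are unbounded (equivalently $CU$ is unbounded and some domain orthogonal to $U$ is unbounded); these are the $\mathbb{Z}^2$-like obstructions to hyperbolicity. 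Define the new $CS$ to be the cone-off $\widehat{\mathcal X}$ of $\mathcal X$ along the family of all these product regions, with $\pi_S\colon\mathcal X\to CS$ the natural (identity-on-points) map, and retain $CU,\pi_U$ for the remaining $U\neq S$. The technical heart is then to show that $\widehat{\mathcal X}$ is hyperbolic (a cone-off argument using that product regions are hierarchically quasiconvex and pairwise coarsely transverse or nested) and to re-verify the almost HHS axioms for this data; this is where ``almost'' is genuinely needed, and where the delicate point is the choice of which $U$ to retain — one needs $E_U$ unbounded for \emph{every} retained $U\neq S$, so that detours through $P_U$ exist.

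\textbf{(2) $\Leftrightarrow$ (3).} Given (2) with bound $B$, apply Theorem~\ref{dist:formula} with any admissible threshold $\sigma>B$: for points $x,y$ on $\gamma$ every term $[d_{CU}(\pi_U x,\pi_U y)]_\sigma$ with $U\neq S$ vanishes, so $d_{\mathcal X}(x,y)\stackrel{A,B_0}{\asymp}[d_{CS}(\pi_S x,\pi_S y)]_\sigma$; since $\pi_S$ is coarsely Lipschitz and $\gamma$ is a quasi-geodesic, this pins $d_{CS}(\pi_S\gamma(s),\pi_S\gamma(t))$ to within fixed multiplicative and additive error of $|s-t|$, which is (3). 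Conversely, fix $U\neq S$ and recall that $\rho^U_S$ is a uniformly bounded subset of the $\delta$-hyperbolic space $CS$. Since $\pi_S\gamma$ is an $(L,L)$-quasi-geodesic, the parameters $t$ with $d_{CS}(\pi_S\gamma(t),\rho^U_S)\le E+R$ lie in an interval $I_U$ of length $\le\ell(L,\delta)$ (a quasi-geodesic meets a bounded set in bounded time, with $R$ the fellow-traveling constant for quasi-geodesics in $CS$). By bounded geodesic image, whenever a parameter subinterval is disjoint from $I_U$ the corresponding $CS$-geodesic — which fellow-travels the relevant sub-arc of $\pi_S\gamma$ — avoids the $E$-neighborhood of $\rho^U_S$, forcing a jump of at most $E$ in $CU$; splitting an arbitrary interval at the ends of $I_U$, and using that $\gamma$ moves a bounded $CU$-amount over the bounded interval $I_U$, bounds $\diam_{CU}\pi_U\gamma$ in terms of $L,\delta$ and the quasi-geodesic constants of $\gamma$.

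\textbf{(1) $\Leftrightarrow$ (2).} For $(1)\Rightarrow(2)$, suppose $\gamma$ is $N$--Morse but $\diam_{CU}\pi_U\gamma$ is huge for some $U\neq S$. Pick $\gamma(a),\gamma(b)$ realizing a huge $CU$-distance, take a hierarchy path $\beta$ between them; $N$--Morseness together with a no-backtracking argument (both are quasi-geodesics with common endpoints) bounds $d_{Haus}(\gamma|_{[a,b]},\beta)$ uniformly. Since $\pi_U\beta$ is then a long unparametrized quasi-geodesic in $CU$, a sub-arc of $\beta$ of large $\mathcal X$-diameter lies in a uniform neighborhood of $P_U$ (the standard relationship between hierarchy paths and product regions, via bounded geodesic image on the domains transverse to or nested above $U$), hence so does a sub-arc $\gamma|_{[c,d]}$ of $\gamma$, along which progress in the $F_U$-factor is as large as we like; rerouting $\gamma|_{[c,d]}$ through a point of $P_U$ pushed far out in the (unbounded) $E_U$-direction produces a $(\lambda',\varepsilon')$-quasi-geodesic from $\gamma(a)$ to $\gamma(b)$, with $\lambda',\varepsilon'$ fixed, that strays arbitrarily far from $\gamma$ — contradicting $N$--Morseness. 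For $(2)\Rightarrow(1)$, assume (2), hence also (3). First, $\gamma|_{[a,b]}$ lies at uniformly bounded Hausdorff distance from every hierarchy path $\beta$ between its endpoints: such a $\beta$ has $\diam_{CU}\pi_U\beta$ bounded for $U\neq S$ (its endpoints are $B$-close in the hyperbolic space $CU$) and $\pi_S\beta$ fellow-travels the quasi-geodesic $\pi_S\gamma$ in the hyperbolic space $CS$, so the computation of the previous step — with $\sigma$ exceeding both projection bounds — places $\beta$ uniformly near $\gamma$ and conversely. It remains to place an arbitrary $(\lambda,\varepsilon)$-quasi-geodesic competitor $\alpha$ with endpoints on $\gamma$ near $\gamma$: since the product regions are coned off in $CS$ and $\gamma$ (hence $\beta$) makes bounded progress in each of them, any far excursion of $\alpha$ from $\gamma$ forces $\pi_S\alpha$ to make a large excursion off the quasi-geodesic $\pi_S\gamma$ in the hyperbolic space $CS$ and return; combined with $\alpha$ being a quasi-geodesic of $\mathcal X$ and the coarse Lipschitz control on $\pi_S$, one shows this is impossible beyond a bound depending only on $\lambda,\varepsilon,L$ and $\delta$, so $\gamma$ is $N$--Morse. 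Tracking the constants through these two steps yields the asserted mutual dependence of $N$, $B$, and $L$.
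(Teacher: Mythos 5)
This statement is not proved in the paper at all: it is imported verbatim from Abbott--Behrstock--Durham \cite{ABD} (their maximization procedure and Theorems 6.2, 7.2), and the present authors use it as a black box. So there is no in-paper argument to compare yours against; the relevant comparison is with \cite{ABD} itself. Your sketch does follow the ABD strategy at a high level: discard bounded domains, keep only domains with unbounded orthogonal complement (``unbounded products''), replace the top-level hyperbolic space by a cone-off of $\mc{X}$ along the surviving product regions, and then read off the equivalences from the distance formula and bounded geodesic image. You also correctly identify why the restriction to domains with unbounded $E_U$ is forced (otherwise (1)$\Leftrightarrow$(2) fails, e.g.\ for a hyperbolic space with a nontrivial HHS structure) and why ``almost'' HHS appears.

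However, as a proof the proposal has two genuine gaps. First, the step you yourself flag as the main obstacle --- hyperbolicity of the coned-off space and re-verification of the (almost) HHS axioms for the new structure, including realization and bounded geodesic image relative to the new $CS$ --- is precisely the technical content of \cite{ABD} and is only asserted, so the argument is circular at its core. Second, in the direction (2)$\Rightarrow$(1) the pivotal claim that ``any far excursion of $\alpha$ from $\gamma$ forces $\pi_S\alpha$ to make a large excursion off $\pi_S\gamma$ in $CS$'' is false as stated, for exactly the reason the new $CS$ was built: product regions are coned off, so a competitor quasi-geodesic can travel arbitrarily far from $\gamma$ in $\mc{X}$ (say, in the $E_U$-direction of some $P_U$) while its $\pi_S$-image barely moves. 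Ruling this out requires the hierarchical quasiconvexity / bounded-products (or contraction) machinery applied to $\alpha$'s projections to the non-maximal domains, not just hyperbolicity of $CS$ together with the coarse Lipschitz property of $\pi_S$; this is where the actual work in \cite{ABD} lies, and your sketch does not supply a substitute for it.
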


We shall call any almost HHS where conditions (1)--(3) of Theorem \ref{thrm:HHS_Stability} are equivalent a \emph{maximal almost HHS}. Observe that if $(\mc{X}, \mf{S})$ is a maximal almost HHS, we can build an injective map 
\[ \ol{\pi}_S: \partial \mc{X} \rightarrow \partial CS \hspace{10pt} \text{via} \hspace{10pt} [\gamma] \mapsto (\pi_S(\gamma(n)))_{n \in \mathbb{N}}.\]

\section{Preliminaries} 

\subsection{Properties of Morse geodesics}

Throughout this section, $X$ will denote a geodesic metric space. We begin by recalling a few standard facts about Morse geodesics.  

\begin{lemma}\label{lemma:close_implies_Morse}
If $\alpha$ is an $N$--Morse quasi-geodesic and $\beta$ is a quasi-geodesic such that \\ $d_{Haus}(\alpha,\beta) \leq D$, then $\beta$ is $N'$--Morse, where $N'$ depends only on $N$ and $D$.
\end{lemma}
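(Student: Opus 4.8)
The plan is to show that any $(\lambda,\varepsilon)$--quasi-geodesic $\sigma$ with endpoints on $\beta$ is contained in a uniformly bounded neighborhood of $\beta$, with the bound depending only on $N$ and $D$. The key observation is that being within Hausdorff distance $D$ of a quasi-geodesic is itself a quasi-geodesic property (up to adjusting constants): if $\beta$ is a $(\mu,\nu)$--quasi-geodesic and $d_{Haus}(\alpha,\beta)\le D$, then $\alpha$ is a $(\mu',\nu')$--quasi-geodesic with $\mu',\nu'$ depending only on $\mu,\nu,D$. In our setting $\beta$ is a $(\mu,\nu)$--quasi-geodesic for some fixed constants (which we may as well absorb into $N'$ since the statement quantifies over a fixed $\beta$), but the cleanest route is to work directly with the endpoints-on-$\beta$ definition of Morse.

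First I would take the endpoints $x = \sigma(a)$ and $y = \sigma(b)$ of $\sigma$, which lie on $\beta$. Since $d_{Haus}(\alpha,\beta)\le D$, there are points $x',y'$ on $\alpha$ with $d(x,x')\le D$ and $d(y,y')\le D$. Next I would form the concatenation $\sigma' = [x',x] \cup \sigma \cup [y,y']$, prepending and appending geodesic segments of length at most $D$. A standard lemma shows that concatenating a $(\lambda,\varepsilon)$--quasi-geodesic with two segments of length at most $D$ at its endpoints yields a $(\lambda'',\varepsilon'')$--quasi-geodesic with $\lambda'',\varepsilon''$ depending only on $\lambda,\varepsilon,D$. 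Now $\sigma'$ is a quasi-geodesic with endpoints $x',y'$ on $\alpha$, so by $N$--Morseness of $\alpha$, $\sigma'$ lies in the $N(\lambda'',\varepsilon'')$--neighborhood of $\alpha$. Since $\sigma \subseteq \sigma'$, the segment $\sigma$ lies in the $N(\lambda'',\varepsilon'')$--neighborhood of $\alpha$, hence in the $\bigl(N(\lambda'',\varepsilon'') + D\bigr)$--neighborhood of $\beta$. Setting $N'(\lambda,\varepsilon) = N(\lambda'',\varepsilon'') + D$ completes the argument, and this gauge depends only on $N$ and $D$.

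The only real technical point, and the step I would be most careful about, is justifying the two elementary "stability of quasi-geodesic constants under bounded perturbation" facts: that $\alpha$ within Hausdorff distance $D$ of a quasi-geodesic is again a quasi-geodesic with controlled constants, and that prepending/appending bounded segments preserves the quasi-geodesic property with controlled constants. Both are routine triangle-inequality estimates — for the concatenation, one checks the lower bound $d(p,q) \ge \frac{1}{\lambda''}|s-t| - \varepsilon''$ by splitting into cases according to which of the three pieces $p$ and $q$ lie in, and the upper bound is immediate from the triangle inequality. Since these are standard and appear in essentially this form in the literature, I would cite them or dispatch them in a sentence rather than grinding through the case analysis. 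Everything else is bookkeeping, and crucially all constants produced depend only on the input data $N$ and $D$, as required.
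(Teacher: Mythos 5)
Your proof is correct, and it is the standard argument: the paper states this lemma without proof as a recalled fact, and the route you take (push the endpoints of $\sigma$ over to $\alpha$ by segments of length at most $D$, note the concatenation is a quasi-geodesic with constants depending only on $\lambda,\varepsilon,D$, apply $N$--Morseness of $\alpha$, then add $D$ to return to $\beta$) is exactly how this is proved in the literature, yielding a gauge $N'(\lambda,\varepsilon)=N(\lambda'',\varepsilon'')+D$ depending only on $N$ and $D$. One caveat: your opening aside, that $d_{Haus}(\alpha,\beta)\leq D$ lets you control the quasi-geodesic constants of $\alpha$ by those of $\beta$ and $D$, is false as stated (Hausdorff distance says nothing about parametrization), but since you explicitly abandon it and your main argument never uses it, this does not affect the proof.
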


\begin{lemma}\label{lemma:morse_subgeodesics}
For each Morse gauge $N$, there exists $N'$, depending only on $N,$ such that if $\alpha$ is an $N$--Morse geodesic in $X$ and $\beta$ is a subgeodesic of $\alpha$, then $\beta$ is $N'$--Morse.
\end{lemma}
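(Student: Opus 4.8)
The plan is to show that a subgeodesic of an $N$--Morse geodesic is $N'$--Morse for some $N'$ depending only on $N$, by controlling how an arbitrary quasi-geodesic with endpoints on the subgeodesic $\beta$ interacts with the ambient geodesic $\alpha$. Let $\beta$ be the subgeodesic of $\alpha$ between points $x,y \in \alpha$, and let $\sigma$ be any $(\lambda,\varepsilon)$--quasi-geodesic with endpoints on $\beta$; I want to bound the Hausdorff-type distance from $\sigma$ to $\beta$ in terms of $N$ and $(\lambda,\varepsilon)$ only. Since the endpoints of $\sigma$ lie on $\alpha$ as well, the $N$--Morse property of $\alpha$ immediately gives that $\sigma$ is contained in the $N(\lambda,\varepsilon)$--neighborhood of $\alpha$. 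The issue is that $\sigma$ could in principle track the part of $\alpha$ lying \emph{outside} $\beta$, so the real content is to show that $\sigma$ cannot stray far along $\alpha$ beyond the endpoints $x$ and $y$.

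First I would observe that, because $\alpha$ is $N$--Morse and hence a Morse geodesic, $\alpha$ satisfies a one-sided estimate: any point of $\alpha$ lying ``past'' $x$ (on the side away from $\beta$) is far from $\beta$ in the intrinsic sense, and more importantly, for such a point $z$ the concatenation argument forces $\sigma$ to backtrack. Concretely, pick a point $w$ on $\sigma$; by the $N$--Morse property $w$ is within $N(\lambda,\varepsilon)$ of some point $z \in \alpha$. If $z$ were at parameter distance more than, say, $2N(\lambda,\varepsilon)$ beyond $x$ along $\alpha$ (away from $y$), then consider the path obtained by going along $\beta$ from $x$ to the endpoint of $\sigma$ near $x$, then along $\sigma$; comparing lengths and using that $\alpha$ is a genuine geodesic (so distances along $\alpha$ are additive) yields a contradiction with $\sigma$ being a $(\lambda,\varepsilon)$--quasi-geodesic — the quasi-geodesic would have to be much longer than the geodesic distance between its endpoints allows. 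This pins every point of $\sigma$ to within a bounded (in $N,\lambda,\varepsilon$) neighborhood of the segment of $\alpha$ from slightly before $x$ to slightly after $y$, i.e. within a bounded neighborhood of $\beta$.

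Alternatively, and perhaps more cleanly, I would prove this by a direct ``insertion'' trick: given the $(\lambda,\varepsilon)$--quasi-geodesic $\sigma$ with endpoints $p,q$ on $\beta$, form a new quasi-geodesic $\hat\sigma$ with endpoints $x,y$ (the endpoints of $\beta$) by prepending the subsegment of $\alpha$ from $x$ to $p$ and appending the subsegment from $q$ to $y$. Since $\alpha$ is a geodesic these attached pieces are geodesic segments, and one checks that $\hat\sigma$ is a $(\lambda',\varepsilon')$--quasi-geodesic with $\lambda',\varepsilon'$ depending only on $\lambda,\varepsilon$ (this is the standard fact that concatenating a geodesic, a quasi-geodesic, and a geodesic, where the geodesic pieces lie along a common geodesic, yields a quasi-geodesic; here additivity of distance along $\alpha$ and the triangle inequality do the work). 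Then $\hat\sigma$ has endpoints on $\alpha$, so by $N$--Morseness $\hat\sigma$, and in particular $\sigma \subseteq \hat\sigma$, lies in the $N(\lambda',\varepsilon')$--neighborhood of $\alpha$. To upgrade ``neighborhood of $\alpha$'' to ``neighborhood of $\beta$'', note that the endpoints $p,q$ of $\sigma$ are on $\beta$ and the portion of $\alpha$ outside $\beta$ is separated — by a second application of the quasi-geodesic length estimate, no point of $\sigma$ maps near the parts of $\alpha$ beyond $x$ or $y$ by more than a bounded amount, so in fact $\sigma$ lies in a $C(N,\lambda,\varepsilon)$--neighborhood of $\beta$. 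Setting $N'(\lambda,\varepsilon) = C(N,\lambda,\varepsilon)$ gives the claim.

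The main obstacle is exactly the last point: ensuring that the $(\lambda,\varepsilon)$--quasi-geodesic $\sigma$ does not run along $\alpha$ far outside the segment $\beta$. This is where the quantitative length comparison is needed — using that $\alpha$ is a true geodesic so that $d(x, z) = d(x,y) + d(y,z)$ for $z$ beyond $y$, combined with the $(\lambda,\varepsilon)$ bounds on $\sigma$, to derive a contradiction if $\sigma$ ventured too far. Everything else (the concatenation being a quasi-geodesic, invoking the $N$--Morse property, tracking constant dependencies) is routine and depends only on $N$, $\lambda$, and $\varepsilon$, hence only on $N$ after we note that $N'$ must be a function of $(\lambda,\varepsilon)$ with coefficients determined by $N$.
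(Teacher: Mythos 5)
You have isolated the right difficulty---keeping the quasi-geodesic $\sigma$ from running along $\alpha$ far beyond the endpoints of $\beta$---but both mechanisms you propose for resolving it fail, so the proof has a genuine gap at its central step. A $(\lambda,\varepsilon)$--quasi-geodesic satisfies only a \emph{multiplicative} comparison between its length and the distance between its endpoints: if $\sigma$ runs from $p$ to $q$ and $w$ is a point on it, then $d(p,w)+d(w,q)\le \lambda^{2}d(p,q)+(\lambda^{2}+2)\varepsilon$. So if $w$ lies within $D=N(\lambda,\varepsilon)$ of a point $z\in\alpha$ beyond $x$, additivity along the geodesic $\alpha$ yields only $d(x,z)\le \tfrac{\lambda^{2}-1}{2}\,d(p,q)+D+O(\varepsilon)$; for $\lambda>1$ and $d(p,q)$ large there is no contradiction at your threshold $2N(\lambda,\varepsilon)$, and nothing in your first argument excludes an excursion of size proportional to $d(p,q)$. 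The ``standard fact'' behind your insertion trick is likewise false as stated: concatenating $[x,p]\subseteq\alpha$, a $(\lambda,\varepsilon)$--quasi-geodesic from $p$ to $q$, and $[q,y]\subseteq\alpha$ need not be a quasi-geodesic with constants depending only on $(\lambda,\varepsilon)$. For instance, in $\mathbb{R}^{2}$ with $\alpha$ the $x$-axis, the broken path $(0,0)\to(-R,0)\to(-R,R)\to(2R,R)\to(2R,0)$ is a $(6,1)$--quasi-geodesic from $p=(0,0)$ to $q=(2R,0)$ with constants independent of $R$, yet prepending the segment of $\alpha$ from $(-3R,0)$ to $p$ gives a path that passes through $(-R,0)$ twice with parameter gap about $2R$, violating every uniform lower quasi-geodesic bound. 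Your justification (``additivity along $\alpha$ and the triangle inequality'') never uses Morseness, so it cannot suffice; any repair must invoke the Morse property of $\alpha$ a second time, beyond the initial containment $\sigma\subseteq\mc{N}_{D}(\alpha)$.

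For comparison, the paper closes exactly this gap with a dichotomy rather than a no-straying claim: after replacing $\sigma$ by a continuous (tame) quasi-geodesic at bounded Hausdorff distance, either $\sigma\subseteq\mc{N}_{D}(\beta)$ and one is done, or $\sigma$ comes $D$--close to the portion of $\alpha$ outside $\beta$; in the case where a single point of $\sigma$ is $D$--close to both components of $\alpha-\beta$, the fact that $\alpha$ is a geodesic forces $\mathrm{length}(\beta)\le 2D$, so $d(p,q)$ and hence the diameter of $\sigma$ is bounded in terms of $D,\lambda,\varepsilon$ and $\sigma$ trivially lies in a controlled neighborhood of $\beta$ (an excursion near only one component is handled similarly, its entry and exit points being forced to lie within about $3D$ of the corresponding endpoint of $\beta$). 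Alternatively, your outline could be completed by a genuine no-backtracking estimate: if the coarse shadow of $\sigma$ on $\alpha$ passes beyond $x$ and later returns (as it must, since $q\in\beta$), then the outgoing and returning passes both lie in $\mc{N}_{D}(\alpha)$ and are therefore roughly $2D$--close at matched points of $\alpha$, and applying the upper quasi-geodesic inequality of $\sigma$ between those two times bounds the depth of the excursion by a constant depending only on $N,\lambda,\varepsilon$. Either way, the bound you need is a constant, not something proportional to $d(p,q)$, and it does not follow from the length comparison or triangle inequality alone.
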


\begin{proof}
Let $\gamma: [a,b] \rightarrow X$ be a $(\lambda, \varepsilon)$--quasi-geodesic with $\gamma(a)$ and $\gamma(b)$ on $\beta$. First we prove the case when $\gamma$ is continuous.  Let $D= N(\lambda,\varepsilon)$, and suppose $\gamma$ is not contained in the $D$--neighborhood of $\beta$. Continuity of $\gamma$ ensures that there exists $c\in [a,b]$ such that $\gamma(c)$ is within $D$ of each component of $\alpha - \beta$. If $\alpha-\beta$ has two components, this implies that the length of $\beta$ is bounded above by $2D$, and thus the length of $\gamma$ is no more than $2D\lambda+\varepsilon$, which guarantees that $\gamma \subseteq \mc{N}_{2D\lambda+\varepsilon}(\beta)$. A similar argument shows that if $\alpha-\beta$ has just one component, then $\gamma$ is contained in a uniform (in terms of $N, \lambda$, and $\varepsilon$) neighborhood of $\beta$. 
 
If $\gamma$ is not continuous, then there exists a continuous $(\lambda, 2\lambda+2\varepsilon)$--quasi-geodesic $\gamma':[a,b] \rightarrow X$ with the same endpoints as $\gamma$ satisfying $d_{Haus}(\gamma, \gamma')\leq \lambda + \varepsilon$ (see \cite{BH} III.H Lemma 1.11). Thus repeating the above argument with $\gamma'$ produces $D'$ depending only on $N$, $\lambda$, and $\varepsilon$ such that $\gamma \subseteq \mc{N}_{D'}(\beta)$. Hence there exists an $N'$ depending only on $N$ such that $\beta$ is $N'$--Morse.
\end{proof} 

\begin{convention} In light of Lemma \ref{lemma:morse_subgeodesics},  for the remainder of this paper when we say a geodesic is $N$--Morse, we will always assume that $N$ was taken large enough so that all subgeodesics  are also $N$--Morse. 
\end{convention} 

In  \cite{cordes} Section 2, Cordes outlines many properties of Morse geodesics, showing that Morse geodesics behave like geodesics in hyperbolic spaces. The following two lemmas are based on \cite{cordes} Section 2, but reformulated to fit our needs.

\begin{lemma} \label{(3,0)-lemma}
Let $\alpha$ be a $(\lambda,\varepsilon)$--quasi-geodesic in $X$ and $x\in X$. If $x'$ is a point on $\alpha$ closest to $x$ and $\gamma$ is the concatenation of a geodesic connecting $x$ to $x'$ and a subsegment of $\alpha$ with endpoint $x'$, then $\gamma$ is a $(2\lambda+1,\varepsilon)$--quasi-geodesic. 
\end{lemma}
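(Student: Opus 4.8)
The plan is to estimate the length of a subsegment of $\gamma$ joining two of its points and show it is comparable, with the claimed constants, to the distance between those two points. Label the concatenation $\gamma$ as follows: let $x'=\alpha(t_0)$ be the closest point on $\alpha$ to $x$, let $\sigma$ denote the geodesic from $x$ to $x'$ (parametrized by arc length, so $\sigma$ is a $(1,0)$-quasi-geodesic), and let $\alpha' $ be the subsegment of $\alpha$ with endpoint $x'$; reparametrize so that $\gamma$ runs first along $\sigma$ and then along $\alpha'$. Given two parameters $s\le u$ in the domain of $\gamma$, there are three cases: both $\gamma(s),\gamma(u)$ lie on $\sigma$; both lie on $\alpha'$; or $\gamma(s)$ lies on $\sigma$ and $\gamma(u)$ lies on $\alpha'$. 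The first two cases are immediate since $\sigma$ is a geodesic and $\alpha'$ is already a $(\lambda,\varepsilon)$-quasi-geodesic (hence a $(2\lambda+1,\varepsilon)$-quasi-geodesic). So the entire content is the mixed case.

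In the mixed case, write $a=d(\gamma(s),x')$, the distance traveled along $\sigma$, and let $b$ be the parameter-length of the piece of $\alpha'$ from $x'$ to $\gamma(u)$; then $u-s = a+b$. For the upper bound I would use the triangle inequality together with the $(\lambda,\varepsilon)$-bound on $\alpha'$: $d(\gamma(s),\gamma(u)) \le d(\gamma(s),x') + d(x',\gamma(u)) = a + d(x',\gamma(u))$, and since $d(x',\gamma(u))\ge \frac{1}{\lambda}b-\varepsilon$ is the wrong direction, instead note $d(\gamma(s),\gamma(u))\le a+\lambda b+\varepsilon \le \lambda(a+b)+\varepsilon = \lambda(u-s)+\varepsilon\le (2\lambda+1)(u-s)+\varepsilon$. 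The lower bound is the crucial point: I want $d(\gamma(s),\gamma(u)) \ge \frac{1}{2\lambda+1}(a+b) - \varepsilon$. The key geometric input is that $x'$ is the closest point of $\alpha$ to $x$, so the point $\gamma(u)\in\alpha$ satisfies $d(x,\gamma(u))\ge d(x,x')$; writing $d(x,x') = d(x,\gamma(s))+a$ (since $\gamma(s)$ lies on the geodesic $\sigma$ between $x$ and $x'$) and $d(x,\gamma(u))\le d(x,\gamma(s)) + d(\gamma(s),\gamma(u))$, we get $d(\gamma(s),\gamma(u)) \ge a$. Combining this with the quasi-geodesic estimate $d(\gamma(s),\gamma(u)) \ge d(x',\gamma(u)) - a \ge (\frac{1}{\lambda}b - \varepsilon) - a$, I would add a suitable multiple of these two inequalities: say $2\lambda$ times the first plus one times the second gives $(2\lambda+1)\,d(\gamma(s),\gamma(u)) \ge 2\lambda a + \frac{1}{\lambda}b - \varepsilon - a \ge a + b - \varepsilon$ (using $2\lambda - 1 \ge 1$ and $\tfrac1\lambda\cdot 1 \le 1$, so each coefficient on $a,b$ is at least $1$), hence $d(\gamma(s),\gamma(u)) \ge \frac{1}{2\lambda+1}(u-s) - \varepsilon$.

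The main obstacle — really the only subtlety — is getting the lower bound with the specific constant $2\lambda+1$ rather than something weaker: one has to exploit the closest-point property precisely once (to obtain $d(\gamma(s),\gamma(u))\ge a$, controlling the geodesic part $\sigma$) and combine it correctly with the intrinsic quasi-geodesic bound on $\alpha'$, balancing the coefficients so that both $a$ and $b$ end up with coefficient at least $1$ after dividing by $2\lambda+1$. I should also handle the degenerate possibility that the closest point $x'$ is not unique or that $\gamma$ is not injective, but the argument above only uses that $x'$ realizes the distance and never uses injectivity, so no extra care is needed there. Finally, I would remark that continuity of $\gamma$ is not required, since all estimates are between values of the map at pairs of parameters.
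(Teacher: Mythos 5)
Your strategy is the standard one and is exactly in the spirit of the argument the paper points to (the paper merely cites Cordes' Lemma~2.2 and calls the proof a near-identical exercise): reduce to the mixed case, use the closest-point property once to get $d(\gamma(s),\gamma(u))\ge a$, use the quasi-geodesic bound on $\alpha'$ to get $d(\gamma(s),\gamma(u))\ge \tfrac{1}{\lambda}b-\varepsilon-a$, and combine linearly. The upper bound and the two lower-bound inequalities are all correct.

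However, the final combination you chose does not work. Weighting the first inequality by $2\lambda$ and the second by $1$ gives $(2\lambda+1)\,d(\gamma(s),\gamma(u))\ \ge\ (2\lambda-1)a+\tfrac{1}{\lambda}b-\varepsilon$, and your claim that this is at least $a+b-\varepsilon$ is false for $\lambda>1$: the coefficient on $b$ is $\tfrac{1}{\lambda}\le 1$, not $\ge 1$ (your parenthetical justification has the inequality backwards), so for example with $\lambda=2$, $a$ tiny and $b$ large the bound you get is roughly $d\ge \tfrac{b}{2\lambda(2\lambda+1)}$, weaker than the required $d\ge\tfrac{b}{2\lambda+1}$. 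The fix is a different choice of weights: take $(\lambda+1)$ times the first inequality plus $\lambda$ times the second, which yields $(2\lambda+1)\,d(\gamma(s),\gamma(u))\ \ge\ (\lambda+1)a+\bigl(b-\lambda\varepsilon-\lambda a\bigr)=a+b-\lambda\varepsilon$, hence $d(\gamma(s),\gamma(u))\ \ge\ \tfrac{1}{2\lambda+1}(u-s)-\varepsilon$ since $\tfrac{\lambda}{2\lambda+1}\le 1$. With that single correction your proof is complete and gives the stated constants.
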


\begin{proof}
An exercise nearly identical to the proof of Lemma 2.2 in \cite{cordes}.
\end{proof}

\begin{lemma} \label{var:cordes}
Let $\alpha$ be an $N$--Morse geodesic ray in $X$.  If $\beta$ is a $(\lambda,\varepsilon)$--quasi-geodesic ray such that $\alpha$ and $\beta$ have the same initial point and $d_{Haus}(\alpha, \beta) < \infty$, then there exists $D$, depending only on $N$, $\lambda$, and $\varepsilon$, such that $\beta \subseteq \mc{N}_{D}(\alpha)$.
\end{lemma}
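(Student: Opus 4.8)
The plan is to bound $d(\beta(t),\alpha)$ by a constant depending only on $N,\lambda,\varepsilon$, uniformly over all $t \ge 0$. The trivial bound $\beta \subseteq \mathcal{N}_{D_0}(\alpha)$ with $D_0 := d_{Haus}(\alpha,\beta)$ is immediate but useless, since $D_0$ depends on $\beta$; the whole content is to make the bound independent of $D_0$. The idea is to cap a long initial segment of $\beta$ off near $\alpha$ so as to produce a quasi-geodesic $\sigma$ that passes through $\beta(t)$, has both endpoints on $\alpha$, and has quasi-geodesic constants depending only on $\lambda,\varepsilon$; the $N$--Morse property of $\alpha$ then gives $\sigma \subseteq \mathcal{N}_{N(2\lambda+1,\varepsilon)}(\alpha)$ and in particular controls $d(\beta(t),\alpha)$. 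In this scheme $D_0$ will only control how far out along $\beta$ we must look, never the final constant.

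Concretely, I would fix $x = \beta(t)$, and, using that $\beta$ is a $(\lambda,\varepsilon)$--quasi-geodesic ray (so $d(\beta(0),\beta(s)) \ge s/\lambda - \varepsilon \to \infty$), choose $s$ larger than the finite threshold $\lambda(\lambda t + 2\varepsilon + 2D_0)$; put $q = \beta(s)$. Since $d_{Haus}(\alpha,\beta) = D_0$, pick a point $q'$ on $\alpha$ with $d(q,q') \le D_0$, and let $q'' = \beta(t'')$ be a point of the sub-quasi-geodesic $\beta|_{[0,s]}$ closest to $q'$. Now apply Lemma \ref{(3,0)-lemma} to $\beta|_{[0,s]}$, the point $q'$, and its closest point $q''$: the concatenation $\sigma$ of a geodesic from $q'$ to $q''$ with the subsegment $\beta|_{[0,t'']}$ (run back to $\beta(0)$) is a $(2\lambda+1,\varepsilon)$--quasi-geodesic, and its endpoints $q'$ and $\beta(0) = \alpha(0)$ both lie on $\alpha$ (this last point is exactly where the common-initial-point hypothesis enters). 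Because $q''$ is a closest point, $d(q',q'') \le d(q',q) \le D_0$, hence $d(\beta(0),\beta(t'')) \ge d(\beta(0),q) - 2D_0 \ge s/\lambda - \varepsilon - 2D_0$; comparing with the upper bound $d(\beta(0),\beta(t'')) \le \lambda t'' + \varepsilon$ and using the choice of $s$ shows $t'' > t$, so $x = \beta(t)$ lies on $\beta|_{[0,t'']} \subseteq \sigma$. Applying the Morse gauge of $\alpha$ to $\sigma$ then gives $d(x,\alpha) \le N(2\lambda+1,\varepsilon)$, and since $t$ was arbitrary we conclude $\beta \subseteq \mathcal{N}_D(\alpha)$ with $D = N(2\lambda+1,\varepsilon)$.

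Two small points I would dispatch in passing: to guarantee that the closest point $q''$ exists I would first replace $\beta$ by a continuous quasi-geodesic at Hausdorff distance $\le \lambda+\varepsilon$ in the standard way (cf. \cite{BH} III.H.1.11), absorbing the extra $\lambda+\varepsilon$ into $D$; and I would note that the image of a geodesic ray is closed, which is what lets me choose $q'$. The only genuinely conceptual step --- and the one I would be most careful to check --- is that $D_0$ appears solely in the threshold on $s$ and nowhere in the constants of $\sigma$ or in the final estimate; once that is confirmed, the rest is routine bookkeeping with the quasi-geodesic inequalities and a single appeal to Lemma \ref{(3,0)-lemma}.
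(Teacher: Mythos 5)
Your proposal is correct and follows essentially the same route as the paper: cap an initial segment of $\beta$ off with a geodesic to a closest point involving a point of $\alpha$, apply Lemma \ref{(3,0)-lemma} to get a $(2\lambda+1,\varepsilon)$--quasi-geodesic with both endpoints on $\alpha$ (using the common initial point), and then invoke the $N$--Morse property so that $D_0=d_{Haus}(\alpha,\beta)$ only governs how far out to truncate, never the final constant $N(2\lambda+1,\varepsilon)$. Your explicit parameter estimate showing $t''>t$ (and the continuity/taming remark) just makes precise what the paper asserts when it says the chosen point lies on the capped subsegment.
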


\begin{proof}
Let $p$ be the shared initial point of $\alpha$ and $\beta$ and $q$ be a point on $\beta$. Since $d_{Haus}(\alpha,\beta)<\infty$, there must exist a point $x$ on $\alpha$ and $x'$ on $\beta$ such that $x'$ is a closest point to $x$ on $\beta$ and $q$ is on the subsegment of $\beta$ between $p$ and $x'$. Let $\beta'$ be the concatenation of the subsegment of $\beta$ between $p$ and $x'$ with a geodesic from $x$ to $x'$. By Lemma \ref{(3,0)-lemma}, $\beta'$ is a $(2\lambda+1, \varepsilon)$--quasi-geodesic, and thus $q \in \beta' \subseteq \mc{N}_{N(2\lambda+1,\varepsilon)}(\alpha)$.  Hence we have $\beta\subseteq \mc{N}_{N(2\lambda+1,\varepsilon)}(\alpha)$.
\end{proof}

\begin{corollary}\label{inf:to:finite}
Let $\alpha$ be an $N$--Morse geodesic ray in $X$ and $\beta:[0,\infty) \rightarrow X$ a $(\lambda,\varepsilon)$--quasi-geodesic ray asymptotic to $\alpha$. There exists  $t_0\geq0$ such that $\beta[t_0,\infty) \subseteq \N_D(\alpha)$, where $D$ depends only on $N$, $\lambda,$ and $\varepsilon$.
\end{corollary}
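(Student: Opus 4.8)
The plan is to reduce to Lemma \ref{var:cordes}, which already treats the case of a Morse geodesic ray and an asymptotic quasi-geodesic ray \emph{sharing} an initial point. The only issue is that $\beta$ need not begin at $p := \alpha(0)$, so the first task is to replace $\beta$ by a quasi-geodesic ray that does begin at $p$ and that differs from $\beta$ only along a bounded piece.

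First I would pick a point $x' = \beta(t_0)$ on $\beta$ that is closest to $p$ (if no exact closest point exists in $X$, take $x'$ realizing $\inf_{y\in\beta} d(p,y)$ up to an additive $1$; this only changes the constants). Let $\gamma\colon[0,\infty)\to X$ be the concatenation of a geodesic from $p$ to $x'$ with the subray $\beta|_{[t_0,\infty)}$, reparametrized suitably. Applying Lemma \ref{(3,0)-lemma} with $\beta$ playing the role of the $(\lambda,\varepsilon)$--quasi-geodesic and $x = p$, the ray $\gamma$ is a $(2\lambda+1,\varepsilon)$--quasi-geodesic ray, and it is based at $p = \alpha(0)$ by construction.

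Next I would check that $\gamma$ is asymptotic to $\alpha$. Every point of $\gamma$ lies either on the geodesic $[p,x']$ (which lies within $d(p,x')$ of $x'\in\beta$) or on $\beta|_{[t_0,\infty)}\subseteq\beta$, so $\gamma\subseteq\N_{d(p,x')}(\beta)$; conversely every point of $\beta$ lies either on $\beta|_{[t_0,\infty)}\subseteq\gamma$ or on $\beta|_{[0,t_0]}$, whose image is bounded since $t_0$ is a fixed finite number. Hence $d_{Haus}(\gamma,\beta)<\infty$, and therefore $d_{Haus}(\gamma,\alpha)\leq d_{Haus}(\gamma,\beta)+d_{Haus}(\beta,\alpha)<\infty$. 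Only finiteness is needed here, not any uniform bound.

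Finally, I would apply Lemma \ref{var:cordes} to the $N$--Morse geodesic ray $\alpha$ and the $(2\lambda+1,\varepsilon)$--quasi-geodesic ray $\gamma$, which shares $\alpha$'s initial point and is at finite Hausdorff distance from $\alpha$: this yields $D$, depending only on $N$, $2\lambda+1$, and $\varepsilon$ — hence only on $N$, $\lambda$, $\varepsilon$ — with $\gamma\subseteq\N_D(\alpha)$. Since $\beta[t_0,\infty)$ is (a reparametrization of) a sub-ray of $\gamma$, we conclude $\beta[t_0,\infty)\subseteq\N_D(\alpha)$, as desired. I do not anticipate a genuine obstacle here; the only points requiring care are that the concatenation $\gamma$ is an honest quasi-geodesic ray based at $p$ (supplied by Lemma \ref{(3,0)-lemma}) and that in verifying $\gamma\sim\alpha$ we need no control on the constants, only finiteness.
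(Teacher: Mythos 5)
Your proposal is correct and matches the paper's own argument: the paper likewise picks a point on $\alpha$, takes a closest point $p'$ on $\beta$, concatenates a geodesic $[p,p']$ with the infinite subray of $\beta$ past $p'$, and applies Lemmas \ref{(3,0)-lemma} and \ref{var:cordes}. Your version is if anything slightly more careful (taking $p=\alpha(0)$ so the hypothesis of Lemma \ref{var:cordes} is literally met, and verifying finiteness of the Hausdorff distance), but it is essentially the same proof.
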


\begin{proof}
 Pick any point $p$ on $ \alpha$ and let $p'$ be a point on $\beta$ closest to $p$. Let $\beta'$ be the concatenation of a geodesic from $p$ to $p'$ and the infinite component of $\beta[0,\infty) -\{p'\}$.  Lemmas \ref{(3,0)-lemma} and \ref{var:cordes} provide a constant $D$ depending only on $N$, $\lambda$, and $\varepsilon$ such that $\beta' \subseteq \mc{N}_D(\alpha)$.
\end{proof}

We shall also require the following result of Cordes \cite{cordes}. 
\begin{lemma} [\cite{cordes} Lemma 2.1, 2.2, 2.3] \label{cordes:Morse:geodesics}
Let $X$ be a geodesic metric space.
\begin{enumerate}[(1)]
    \item If $\alpha$ is a finite $N$--Morse geodesic in $X$ and $\beta$ is a $(\lambda,\varepsilon)$--quasi-geodesic with the same endpoints as $\alpha$, then there exists a constant $D$ and a Morse gauge $N'$ both depending only on $N$, $\lambda$, and $\varepsilon$ such that $d_{Haus}(\alpha,\beta) \leq D$ and $\beta$ is $N'$--Morse. 
    \item If $T$ is a finite geodesic triangle in $X$ with all sides $N$--Morse, then $T$ is $\delta_N$--slim, where $\delta_N$ depends only on $N$.  
    \item If $T$ is a finite geodesic triangle in $X$ such that two sides of $T$ are $N$--Morse, then there exists an $N'$ depending only on $N$ such that the third side of $T$ is $N'$--Morse.
\end{enumerate}
\end{lemma}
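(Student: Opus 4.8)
The plan is to cite and adapt the corresponding statements from \cite{cordes} Section 2, checking that each conclusion depends only on the Morse gauge $N$ (and the quasi-geodesic constants $\lambda,\varepsilon$ where relevant), which is the only real content beyond what Cordes already proved. For part (1), I would first reduce to the case where $\beta$ is a continuous $(\lambda', \varepsilon')$--quasi-geodesic by the standard taming argument (\cite{BH} III.H Lemma 1.11), at the cost of replacing $\lambda,\varepsilon$ by controlled constants. Given the $N$--Morse hypothesis on $\alpha$, every point of $\beta$ lies within $N(\lambda',\varepsilon')$ of $\alpha$ by the definition of Morseness, so $\beta \subseteq \mc{N}_{N(\lambda',\varepsilon')}(\alpha)$. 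For the reverse inclusion $\alpha \subseteq \mc{N}_D(\beta)$, I would use a continuity/connectedness argument: if some point $\alpha(t_0)$ were far from $\beta$, then since the endpoints of $\alpha$ lie on $\beta$, one could produce a long quasi-geodesic loop or a short cut contradicting that $\beta$ is a $(\lambda',\varepsilon')$--quasi-geodesic, all with constants controlled by $N,\lambda,\varepsilon$; this is precisely the argument in \cite{cordes} Lemma 2.1. Once $d_{Haus}(\alpha,\beta)\leq D$ is established, Lemma \ref{lemma:close_implies_Morse} immediately gives that $\beta$ is $N'$--Morse with $N'$ depending only on $N$ and $D$, hence only on $N,\lambda,\varepsilon$.

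For part (2), let $T$ have vertices $x,y,z$ with sides $[x,y], [y,z], [x,z]$ all $N$--Morse. Fix a point $w$ on $[x,y]$; I want to show $w$ is within a uniform distance of $[y,z] \cup [x,z]$. Form the geodesic $[x,w]$ and the geodesic $[w,z]$. Then $[x,w]$ is a subgeodesic of $[x,y]$, hence $N$--Morse (by the convention following Lemma \ref{lemma:morse_subgeodesics}), and the concatenation $[x,w] \cup [w,z]$ is a $(\lambda_0,\varepsilon_0)$--quasi-geodesic from $x$ to $z$ for universal $\lambda_0,\varepsilon_0$ depending only on $N$, using the Morse property of $[x,w]$ together with a comparison of distances — essentially Lemma \ref{(3,0)-lemma} applied twice, or directly the estimate that a concatenation of two geodesics meeting at a point that is a near-closest-point projection is a quasi-geodesic. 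Since $[x,z]$ is $N$--Morse and has the same endpoints $x,z$, this concatenation lies in $\mc{N}_{N(\lambda_0,\varepsilon_0)}([x,z])$; in particular $w$ itself is within $N(\lambda_0,\varepsilon_0)$ of $[x,z]$. (If the concatenation fails to be a quasi-geodesic because $w$ is not close to the projection structure, I instead replace it by the geodesic $[x,z]$ routed through a closest-point projection of $w$, as in Lemma \ref{(3,0)-lemma}.) Setting $\delta_N = N(\lambda_0,\varepsilon_0)$ gives the slimness with $\delta_N$ depending only on $N$.

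For part (3), suppose $[x,y]$ and $[y,z]$ are $N$--Morse and we want the third side $[x,z]$ to be $N'$--Morse. The concatenation $\eta = [x,y] \cup [y,z]$ is an $N''$--Morse (unparametrized) quasi-geodesic from $x$ to $z$: indeed any $(\lambda,\varepsilon)$--quasi-geodesic with endpoints on $\eta$ has those endpoints lying on the two Morse geodesic pieces, and a routine case analysis — splitting the quasi-geodesic at its last point near $[x,y]$ and first point near $[y,z]$ — shows it stays in a uniform neighborhood of $\eta$, with constants depending only on $N$; this is exactly \cite{cordes} Lemma 2.3. Now $[x,z]$ is a geodesic with the same endpoints as the $N''$--Morse quasi-geodesic $\eta$, so part (1) of this very lemma (applied with $\eta$ in the role of the quasi-geodesic and $[x,z]$ in the role of the geodesic — or symmetrically) yields that $[x,z]$ is $N'$--Morse with $N'$ depending only on $N''$, hence only on $N$.

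The main obstacle I expect is purely bookkeeping: making sure that in the concatenation arguments of parts (2) and (3) the resulting object really is a quasi-geodesic with constants depending \emph{only} on $N$ and not on, say, the lengths of the sides of the triangle. The key mechanism for this is that a geodesic concatenated with a geodesic to a closest-point projection is a $(2\lambda+1,\varepsilon)$--quasi-geodesic (Lemma \ref{(3,0)-lemma}), together with the fact that on an $N$--Morse geodesic, closest-point projection is coarsely well-defined with constant depending only on $N$ — a standard consequence of the Morse property that can be extracted from \cite{cordes} Section 2. With that in hand, all three parts follow by citing \cite{cordes} Lemmas 2.1--2.3 and tracking the dependence of constants, which is why I would keep the written proof to a short remark that these are the results of \cite{cordes} with the stated dependence of constants, exactly as the paper does.
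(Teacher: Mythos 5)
Since the paper gives no argument for this lemma at all---it is quoted verbatim from \cite{cordes} (Lemmas 2.1, 2.2, 2.3)---your bottom-line plan of citing Cordes and checking that the constants depend only on $N$ (and $\lambda,\varepsilon$) is exactly the paper's approach, and your sketch of part (1) (Morseness gives $\beta\subseteq\mc{N}_{N(\lambda',\varepsilon')}(\alpha)$ after taming, a connectedness argument for the reverse inclusion, then Lemma \ref{lemma:close_implies_Morse}) is sound.

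The independent sketches you give for (2) and (3), however, each rest on a step that fails: the claim that $[x,w]\cup[w,z]$ (for an arbitrary $w$ on $[x,y]$), respectively $[x,y]\cup[y,z]$, is a quasi-geodesic---even an unparametrized one---with constants depending only on $N$. Take $X$ a tree, $z$ very close to $x$, $y$ far away, and $w$ the midpoint of $[x,y]$: every side is Morse with one universal gauge, yet both concatenations travel out a distance comparable to $d(x,y)$ and return to a point near $x$, so they are not quasi-geodesics for any uniform constants. Lemma \ref{(3,0)-lemma} does not repair this, since it applies only when the concatenation point is a closest point on the geodesic to the other endpoint, which an arbitrary $w$ (or the vertex $y$) need not be; your parenthetical fallback does not explain how the rerouted path controls $d(w,[x,z]\cup[y,z])$. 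The standard fix---and the pattern the paper itself follows in Lemmas \ref{var:cordes} and \ref{2-side:Morse}---is to route through the closest-point projection: for (2), let $z'$ be a closest point to $z$ on $[x,y]$; then $[z,z']$ concatenated with the half of $[x,y]$ containing $w$ is a $(3,0)$--quasi-geodesic by Lemma \ref{(3,0)-lemma}, and the Morse property of $[x,z]$ (or $[y,z]$, depending on which side of $z'$ the point $w$ lies) puts $w$ within $N(3,0)$ of that side, giving $\delta_N=N(3,0)$. Part (3) likewise is not a formal consequence of part (1) applied to the concatenation of the two Morse sides and needs its own argument, which is the actual content of \cite{cordes} Lemma 2.3. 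So if your written proof is only the citation you are in the same position as the paper, but the sketches for (2) and (3) would not stand on their own.
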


In the remainder of the paper, we will be working primarily with infinite Morse geodesics, and we therefore need to generalize parts (1) and (3) of  Lemma \ref{cordes:Morse:geodesics} to the infinite case. Our strategy is to utilize Corollary \ref{inf:to:finite} to pass from infinite geodesics to finite geodesics and then appeal to Lemma \ref{cordes:Morse:geodesics}.

\begin{lemma}\label{Morse:end-points}
Let $\alpha$ be an $N$--Morse infinite geodesic and $\beta$ a $(\lambda, \varepsilon)$--quasi-geodesic such that $d_{Haus}(\alpha, \beta) < \infty$. If $\alpha$ and $\beta$ are rays, assume their initial points are the same. Then there exist a constant $D$ and a gauge $N'$  both depending only on $N$, $\lambda$, and $\varepsilon$  such that $d_{Haus}(\alpha, \beta) \leq D$ and $\beta$ is $N'$--Morse.
\end{lemma}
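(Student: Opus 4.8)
The plan is to bound the two Hausdorff inclusions $\beta \subseteq \N_{D_1}(\alpha)$ and $\alpha \subseteq \N_{D_2}(\beta)$ by constants depending only on $N,\lambda,\varepsilon$, and then feed the resulting uniform Hausdorff bound into Lemma~\ref{lemma:close_implies_Morse} to conclude that $\beta$ is $N'$--Morse. First I would note that $\beta$ is itself infinite (having finite Hausdorff distance from the unbounded set $\alpha$) and that $\alpha$ and $\beta$ are necessarily of the same type --- both rays or both bi-infinite --- by a brief comparison of the ends of a geodesic and a quasi-geodesic lying at finite Hausdorff distance; so there are just two cases, both rays (sharing an initial point, by hypothesis) or both bi-infinite. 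The central difficulty is uniformity: the hypothesis only gives $d_{Haus}(\alpha,\beta) < \infty$, so a naive attempt to apply the finite-geodesic statement Lemma~\ref{cordes:Morse:geodesics}(1) to truncations of $\alpha$ and $\beta$ produces a constant depending on the a priori uncontrolled Hausdorff distance, precisely because reconnecting $\beta$ to $\alpha$ costs a segment of exactly that length. So the real work is to first establish $\beta \subseteq \N_{D_1}(\alpha)$ with $D_1$ uniform, avoiding this circularity, and then to bootstrap the reverse inclusion.

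When $\alpha,\beta$ are rays with common initial point $p$, Lemma~\ref{var:cordes} immediately gives $\beta \subseteq \N_{D_1}(\alpha)$ with $D_1$ depending only on $N,\lambda,\varepsilon$. For the reverse inclusion I would fix $\alpha(t)$; since $d(p,\beta(s)) \ge s/\lambda - \varepsilon$, for $s$ large enough every point $\alpha(u)$ within $D_1$ of $\beta(s)$ satisfies $u = d(p,\alpha(u)) > t$. Fixing such $s$ and $u$, the concatenation $\eta$ of $\beta|_{[0,s]}$ with a geodesic of length $\le D_1$ from $\beta(s)$ to $\alpha(u)$ is a quasi-geodesic from $p$ to $\alpha(u)$ with constants depending only on $\lambda,\varepsilon,D_1$ (appending a uniformly bounded segment to one end of a quasi-geodesic is harmless). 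Then Lemma~\ref{cordes:Morse:geodesics}(1), applied to $\eta$ and the $N$--Morse geodesic $\alpha|_{[0,u]}$ (a subgeodesic of $\alpha$, hence Morse by the convention above), puts $\alpha(t) \in \alpha|_{[0,u]}$ within a uniform distance of $\eta$, hence within $D_1$ plus that distance of $\beta$. As $t$ was arbitrary, this gives $\alpha \subseteq \N_{D_2}(\beta)$ with $D_2$ uniform.

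In the bi-infinite case I would first tame the two tails of $\beta$ using Corollary~\ref{inf:to:finite}, which is the key ``basepoint-free'' tool: each sub-ray of $\beta$ is asymptotic to a sub-ray of $\alpha$, so there is a uniform constant $D$ and a bounded interval $[c_1,c_2]$ with $\beta(\R \setminus [c_1,c_2]) \subseteq \N_D(\alpha)$; in particular the endpoints $\beta(c_1),\beta(c_2)$ of the finite quasi-geodesic $\beta|_{[c_1,c_2]}$ already lie in the uniform neighborhood $\N_D(\alpha)$. Reconnecting $\beta|_{[c_1,c_2]}$ to $\alpha$ at both ends by geodesics of length $\le D$ yields a quasi-geodesic with uniform constants running between two points of $\alpha$, so Lemma~\ref{cordes:Morse:geodesics}(1) places $\beta|_{[c_1,c_2]}$ in a uniform neighborhood of $\alpha$ as well; together with the tails, $\beta \subseteq \N_{D_1}(\alpha)$ for a uniform $D_1$. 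The reverse inclusion $\alpha \subseteq \N_{D_2}(\beta)$ is then obtained exactly as in the ray case, except that a given $\alpha(t)$ must be bracketed on both sides: since the two ends of $\beta$ track the two ends of $\alpha$ and $\beta \subseteq \N_{D_1}(\alpha)$ is now known, I would choose $\sigma_1 \ll 0 \ll \sigma_2$ with $\beta(\sigma_i)$ within $D_1$ of $\alpha(u_i)$ and $u_1 < t < u_2$, reconnect $\beta|_{[\sigma_1,\sigma_2]}$ to $\alpha(u_1)$ and $\alpha(u_2)$ by short geodesics, and compare with $\alpha|_{[u_1,u_2]}$ via Lemma~\ref{cordes:Morse:geodesics}(1).

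With both inclusions in hand, $d_{Haus}(\alpha,\beta) \le D$ for a $D$ depending only on $N,\lambda,\varepsilon$, and Lemma~\ref{lemma:close_implies_Morse} then gives that $\beta$ is $N'$--Morse with $N'$ depending only on $N$ and $D$, hence only on $N,\lambda,\varepsilon$, as required. The only routine point to check along the way is that attaching a geodesic of uniformly bounded length to an endpoint of a $(\lambda,\varepsilon)$--quasi-geodesic keeps the quasi-geodesic constants uniformly controlled. I expect the main obstacle to be the bi-infinite case: there is no shared basepoint to which Lemma~\ref{var:cordes} can be applied, and the uniformity problem bites hardest there; the idea that resolves it is that Corollary~\ref{inf:to:finite} controls each tail of $\beta$ by a uniform constant, leaving only a finite central segment whose endpoints are already uniformly close to $\alpha$ --- precisely the situation in which the finite-geodesic lemmas apply with uniform constants.
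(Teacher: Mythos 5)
Your proposal is correct and follows essentially the same route as the paper: reduce to a uniform Hausdorff bound and then apply Lemma \ref{lemma:close_implies_Morse}, obtain $\beta \subseteq \N_{D}(\alpha)$ in the ray case from Lemma \ref{var:cordes}, and get the reverse inclusion by reconnecting finite pieces to $\alpha$ with short geodesics and comparing via Lemma \ref{cordes:Morse:geodesics} part (1). The only cosmetic difference is in the bi-infinite case, where the paper reduces directly to the ray case using the closest-point concatenation of Lemma \ref{(3,0)-lemma}, whereas you package the same trick through Corollary \ref{inf:to:finite} applied to the two tails of $\beta$ before treating the central finite segment.
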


\begin{proof}
By Lemma \ref{lemma:close_implies_Morse}, once we bound the Hausdorff distance between $\alpha$ and $\beta$ by a constant depending only on $N, \lambda$, and $\varepsilon$, we are done. In  the ray case, Lemma \ref{var:cordes} provides a constant $D$ depending on $N, \lambda$, and $\varepsilon$ such that $\beta \subseteq \mc{N}_D(\alpha)$. If it is not the case that $\alpha \subseteq \mc{N}_D(\beta)$, then $\alpha$ is only disjoint from $\mc{N}_D(\beta)$ on intervals of finite length; thus we can apply Lemma \ref{cordes:Morse:geodesics} part (1) to show that there exists  $D'$ depending only on $N, \lambda$, and $\varepsilon$ such that $\alpha \subseteq \mc{N}_{D'}(\beta)$. In the bi-infinite case, we get the same conclusion by reducing to the ray case as follows: select a point $p$ on $\alpha$ and concatenate a geodesic from $p$ to a closest point $p'$ on $\beta$ with a subray of $\beta$ with initial point $p'$. Observe that by Lemma \ref{(3,0)-lemma}, this is a $(2\lambda+1,\varepsilon)$--quasi-geodesic with basepoint on $\alpha$. 
\end{proof}

\begin{corollary}\label{cor:Morse_end-points}
Suppose $X$ is proper. For each Morse gauge $N$, there exists $N'$ such that for all $a,b \in \partial X$ if there is an $N$--Morse geodesic from $a$ to $b$, then every geodesic from $a$ to $b$ is $N'$--Morse.
\end{corollary}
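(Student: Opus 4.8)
The plan is to reduce the statement about bi-infinite (or ray) geodesics between boundary points to the finite-geodesic situation already handled by Lemma \ref{cordes:Morse:geodesics}(1). Fix a Morse gauge $N$ and suppose $\alpha, \beta$ are two geodesics from $a$ to $b$, with $\alpha$ being $N$--Morse. By definition of "geodesic from $a$ to $b$," the negative and positive subrays of $\alpha$ are asymptotic to the negative and positive subrays of $\beta$, respectively; in particular $\alpha$ and $\beta$ are both asymptotic to a common representative on each end, so after adjusting by finite Hausdorff distance on each half we expect $d_{Haus}(\alpha,\beta)<\infty$. The first step is therefore to establish this finiteness rigorously: apply Corollary \ref{inf:to:finite} to each of the two asymptotic ray-pairs (the two positive subrays, and the two negative subrays) to get that, outside a compact part, each ray of $\beta$ lies in a uniform neighborhood of the corresponding ray of $\alpha$ and vice versa. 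Since the left-over compact parts have bounded image in a proper space, this gives $d_{Haus}(\alpha,\beta) \leq D_0$ for some $D_0$ — though a priori $D_0$ depends on $\lambda,\varepsilon$, which we do not want.

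To remove the dependence on $(\lambda,\varepsilon)$ — note $\beta$ here is an honest geodesic, so $(\lambda,\varepsilon)=(1,0)$ is fixed, and that is the whole point: we only need the argument for the quasi-geodesic constants $(1,0)$. So the second step is simply to run Lemma \ref{Morse:end-points} (or re-run its proof) with $\beta$ a $(1,0)$--quasi-geodesic that is asymptotic to $\alpha$ on both ends. Lemma \ref{Morse:end-points} as stated requires $d_{Haus}(\alpha,\beta)<\infty$ as a hypothesis, which we have just verified, and it requires matching initial points in the ray case; in the ray case we may translate or invoke the standard fact that two asymptotic geodesic rays from possibly different basepoints are within finite Hausdorff distance, then feed this into Lemma \ref{cordes:Morse:geodesics}(1) exactly as in the proof of Lemma \ref{Morse:end-points}. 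The output is a constant $D$ and a gauge $N'$, depending only on $N$ and on $(1,0)$, hence only on $N$, with $d_{Haus}(\alpha,\beta) \leq D$ and $\beta$ being $N'$--Morse. This $N'$ is the desired uniform gauge.

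The one genuine subtlety — and the step I expect to be the main obstacle — is the passage from "asymptotic on each end" to "finite Hausdorff distance between the two bi-infinite geodesics," because asymptoticity is only controlled on the two ends and one must argue the middle portions cannot wander far apart. The clean way to handle this is: pick a point $p$ on $\alpha$; by Corollary \ref{inf:to:finite} applied to the positive subray of $\beta$, it eventually enters $\N_D(\alpha)$, and similarly for the negative subray; the finitely many remaining points of $\beta$ form a bounded set, and in a proper geodesic space a bounded subpath of a geodesic asymptotic on both ends to $\alpha$ must stay in a bounded neighborhood of $\alpha$ (using $\delta_N$--slimness of Morse triangles from Lemma \ref{cordes:Morse:geodesics}(2): form a triangle with one side a long subsegment of $\alpha$ and the other two sides geodesics to $\beta$, all $N$-- or $N'$--Morse, and conclude the subsegment of $\beta$ tracks $\alpha$). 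Symmetrically $\alpha \subseteq \N_{D'}(\beta)$ as in Lemma \ref{Morse:end-points}. Once $d_{Haus}(\alpha,\beta)$ is bounded in terms of $N$ alone, Lemma \ref{lemma:close_implies_Morse} immediately yields the uniform Morse gauge $N'$, completing the proof.
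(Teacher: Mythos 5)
Your proposal is correct and follows the route the paper intends: since $a,b\in\partial X$, any two geodesics from $a$ to $b$ are bi-infinite, each half of $\beta$ is asymptotic to the corresponding half of $\alpha$ (both halves being asymptotic to representatives of the same boundary point), so $d_{Haus}(\alpha,\beta)<\infty$, and Lemma \ref{Morse:end-points} applied with $(\lambda,\varepsilon)=(1,0)$ immediately gives a gauge $N'$ depending only on $N$. The ``main obstacle'' you describe is illusory --- a bi-infinite geodesic is exactly the union of its two rays, so finiteness of the Hausdorff distance follows at once from the definition of asymptoticity, and neither Corollary \ref{inf:to:finite} nor the slimness argument is needed (indeed your ``vice versa'' use of Corollary \ref{inf:to:finite} would require $\beta$ to be Morse, which is not yet known); likewise the ray clause of Lemma \ref{Morse:end-points} never comes into play here.
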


\begin{lemma}\label{2-side:Morse}
Let $T$ be a geodesic triangle (possibly with some infinite sides). For each $N,$ there exists $N'$ such that if two sides of $T$ are $N$--Morse, then the third side of $T$ is $N'$--Morse.
\end{lemma}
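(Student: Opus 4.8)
The plan is to follow the strategy announced just before the statement: use Corollary \ref{inf:to:finite} to replace infinite sides by finite geodesics and then feed everything into the finite–triangle statement Lemma \ref{cordes:Morse:geodesics}(3). Write $e_1,e_2$ for the two $N$--Morse sides, $V$ for their common vertex, and $e_3$ for the third side, joining the remaining two vertices $A$ and $B$; whenever two Morse gauges appear I tacitly replace them by their pointwise maximum, so that all constants produced below depend only on $N$. The one small fact I would lean on repeatedly is a strengthening of Lemma \ref{cordes:Morse:geodesics}(1): if $\alpha$ is a finite $N$--Morse geodesic, $\beta$ a finite geodesic, and the endpoints of $\beta$ lie within $D$ of the corresponding endpoints of $\alpha$, then $\beta$ is $N'$--Morse with $N'=N'(N,D)$. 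This follows by prepending and appending the two short geodesics joining the endpoints of $\beta$ to those of $\alpha$: the result is a $(1,4D)$--quasi-geodesic with the same endpoints as $\alpha$, to which Lemma \ref{cordes:Morse:geodesics}(1) and then Lemma \ref{lemma:close_implies_Morse} apply.

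First I would eliminate the case $V\in\partial X$. There $e_1$ and $e_2$ are asymptotic, so Corollary \ref{inf:to:finite} gives $D=D(N)$ and points $p\in e_1$, $q\in e_2$ with $d(p,q)\le D$; then $[p,q]$ is $N_0$--Morse with $N_0=N_0(D)$, while $[A,p]\subseteq e_1$ and $[B,q]\subseteq e_2$ remain $N$--Morse. Applying the case $V\in X$ (proved next) to $\{A,p,q\}$ and to $\{B,p,q\}$ shows $[A,q]$ and $[B,p]$ are Morse with gauge controlled by $N$, and applying it once more to $\{A,B,p\}$ shows $e_3$ is. Now for $V\in X$: if also $A,B\in X$ this is exactly Lemma \ref{cordes:Morse:geodesics}(3), so assume at least one endpoint, say $A$, lies in $\partial X$. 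I would show that every finite subsegment of $e_3$ is Morse with a gauge independent of the subsegment; since any quasi-geodesic with endpoints on $e_3$ has those endpoints on a finite subsegment, this forces $e_3$ itself to be uniformly Morse. Because $e_3$ represents $A$ it is asymptotic to $e_1$, so by Corollary \ref{inf:to:finite} a subray $e_3|_{[t_0,\infty)}$ lies in the $D$--neighborhood of $e_1$, with $D=D(N)$. A subsegment of $e_3$ inside this tail region has both endpoints within $D$ of $e_1$, hence is $N'$--Morse by the endpoint–comparison fact above. A subsegment inside the complementary, bounded–depth region near $B$ is handled once we know $e_3|_{[0,t_0]}$ is uniformly Morse, and that follows from Lemma \ref{cordes:Morse:geodesics}(3) applied to $\{B,e_3(t_0),p\}$ for $p$ far out on $e_1$: the side $[B,p]$ is Morse because in $\{V,B,p\}$ the sides $[V,B]=e_2$ and $[V,p]\subseteq e_1$ are $N$--Morse, and $[e_3(t_0),p]$ is Morse by the endpoint–comparison fact. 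A subsegment straddling the two regions is then caught by two more applications of Lemma \ref{cordes:Morse:geodesics}(3), and when both $A,B\in\partial X$ the same argument is run from both ends, with the ``middle'' subsegment of $e_3$ caught inside the triangle $\{e_3(-t_0),e_3(t_1),V\}$.

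The crux, and the only real obstacle, is precisely that $e_3$ may be an infinite geodesic, so one cannot simply invoke Lemma \ref{cordes:Morse:geodesics} on a finite geodesic with the same endpoints; the remedy is to truncate the two good sides instead, build auxiliary finite triangles, and keep every Morse gauge and additive constant introduced along the way a function of $N$ alone — which is exactly what the endpoint–comparison strengthening of Lemma \ref{cordes:Morse:geodesics}(1) secures. If one is willing to assume $X$ proper there is a cleaner alternative to the subsegment bookkeeping: take finite $N'$--Morse geodesics $[B,p_k]$ (or $[p_k,q_k]$) with $p_k\to A$ and $q_k\to B$, extract an Arzel\`a--Ascoli limit, note it is Morse by Lemma \ref{lemma:cordes_uniform_convergence} and asymptotic to the correct boundary point(s) by slimness of Morse triangles (Lemma \ref{cordes:Morse:geodesics}(2)), and conclude with Lemma \ref{Morse:end-points} or Corollary \ref{cor:Morse_end-points}. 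I would present the subsegment argument, since it matches the generality of the statement (no properness assumed).
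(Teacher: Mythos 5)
Your core mechanism is the same as the paper's: truncate using Corollary \ref{inf:to:finite}, upgrade Lemma \ref{cordes:Morse:geodesics}(1) to an ``endpoints within $D$ of a finite Morse geodesic'' comparison (your $(1,4D)$--quasi-geodesic argument is correct, and is exactly what the paper uses implicitly when it declares its truncated sides $\gamma_1,\gamma_2$ Morse), feed the resulting finite triangle into Lemma \ref{cordes:Morse:geodesics}(3), and finish by noting that any quasi-geodesic with endpoints on the third side has its endpoints on a uniformly Morse finite subsegment. Your treatment of the case where the apex $V$ of the two Morse sides lies in $X$ is complete and, as you intend, needs no properness.

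The gap is in the $V\in\partial X$ branch. There you erect new sides $[A,q]$ and $[B,p]$, i.e.\ geodesics from the interior points $p\in e_1$, $q\in e_2$ to the vertices $A,B$, and apply the already-proved case to the auxiliary triangles $\{A,p,q\}$, $\{B,p,q\}$, $\{A,B,p\}$. When the relevant vertex is itself ideal --- in particular for a triangle all three of whose vertices lie in $\partial X$, a configuration the statement must cover --- these sides are not part of the given data: in a geodesic space that is not assumed proper, a geodesic ray from $p$ asymptotic to the class of $B$ need not exist, and even granting properness its existence is not supplied by any result you cite (Proposition \ref{prop:cordes_visual} produces bi-infinite geodesics between boundary points, not rays from an arbitrary interior point); it would require an Arzel\`a--Ascoli argument of the kind you relegate to your ``alternative,'' contradicting your claim that the main argument is properness-free. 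The repair is to never build infinite sides at all, which is what the paper's single uniform construction does: choose $x_0$ on $e_1$ within $D=D(N)$ of $e_2$ via Corollary \ref{inf:to:finite} (or $x_0=V$ if $V\in X$), choose $x_1,x_2$ on $e_3$ within $D$ of $e_1$ and $e_2$ respectively (or equal to the finite vertices), taken far enough out that the endpoints of the given quasi-geodesic lie between them, and run your endpoint-comparison plus part (3) argument on the finite triangle with vertices $x_0,x_1,x_2$. This handles every configuration of ideal vertices at once, using only finite geodesics, and is essentially your $V\in X$ argument applied without the preliminary reduction.
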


\begin{proof}
Let $\alpha_1, \alpha_2$, and $\alpha_3$ denote the sides of $T$, where $\alpha_1$ and $\alpha_2$ are $N$--Morse. Let $\beta$ be a $(\lambda, \varepsilon)$--quasi-geodesic with endpoints on $\alpha_3$. By Corollary \ref{inf:to:finite}, we can find $x_1$ and $x_2$ on $\alpha_3$ such that each $x_i$ is within $D$ of $\alpha_i$, where $D$ is a constant determined by $N$.  Moreover, we can choose $x_1$ and $x_2$ so that the endpoints of $\beta$ lie between $x_1$ and $x_2$ on $\alpha_3$. Similarly, there exists  $x_0$ on $\alpha_1$ such that $x_0$ is within $D$ of $\alpha_2$.

Let $\gamma_1$ be a geodesic from $x_0$ to $x_1$, $\gamma_2$ a geodesic from $x_0$ to $x_2$, and $\gamma_3$ the subsegment of $\alpha_3$ connecting $x_1$ and $x_2$ (see Figure \ref{fig:truncated_inf_triangle}). Because the endpoints of $\gamma_1$ (resp. $\gamma_2$) are within $D$ of $\alpha_1$ (resp. $\alpha_2$), and $N$ determines $D$, Lemma \ref{cordes:Morse:geodesics} part (1) says that $\gamma_1$ and $\gamma_2$ are Morse for some gauge depending only on $N$. Therefore, by Lemma \ref{cordes:Morse:geodesics} part (3), $\gamma_3$ is Morse for some gauge $N'$ depending only on $N$. By construction, the endpoints of $\beta$ lie on $\gamma_3$, and  thus we have 
\[ \beta \subseteq \mc{N}_{N'(\lambda, \varepsilon)}(\gamma_3)\subseteq \mc{N}_{N'(\lambda, \varepsilon)}(\alpha_3),\]
establishing that $\alpha_3$ is $N'$--Morse.

\begin{figure}
\begin{center}
\begin{tikzpicture}[scale=2.5]
\draw[thick] (0,0) circle (1);
\clip (0,0) circle (1);
\hgline{90}{210}
\hgline{210}{330}
\hgline{330}{450}

\node[fill=black, circle, scale=.5,label=left:{$x_0$}] (x0) at (-.075,.5){}; 
\node[fill=black, circle, scale=.5,label=below:{$x_1$}] (x1) at (-.45,-.33){}; 
\node[fill=black, circle, scale=.5,label=below:{$x_2$}] (x2) at (.45,-.32){}; 
\node(x0prime) at (.13,.5){}; 
\coordinate(x1prime) at (-.52,-.24){}; 
\node(x2prime) at (.55,-.17){}; 
\node(gamma3) at (0.0,-.2){$\gamma_3$}; 
\coordinate (betastart) at (-.25,-.285){}; 
\coordinate (betaend) at (.25,-.285){}; 

\draw[thick] (x0)--(x0prime); 
\draw[thick] (x1) --(x1prime); 
\draw[thick] (x2) --(x2prime); 
\draw[thick] (x0) to [out=290, in=20 ] node[left]{ \hspace{100pt}$\gamma_1$} (x1); 
\draw[thick] (x0) to [out=295, in=160 ] node[right]{\hspace{-7pt} $\gamma_2$} (x2); 
\draw[photon,thick] (betaend) to (betastart);  
\path (.25,-.55) to node[below]{$\beta$}(-.25,-.55); 
\end{tikzpicture}
\end{center} 
\captionsetup{width=.88\linewidth}
\caption{\small An ideal triangle with Morse sides can be truncated to a finite geodesic triangle. See proof of Lemma \ref{2-side:Morse}. } \label{fig:truncated_inf_triangle}
\end{figure}
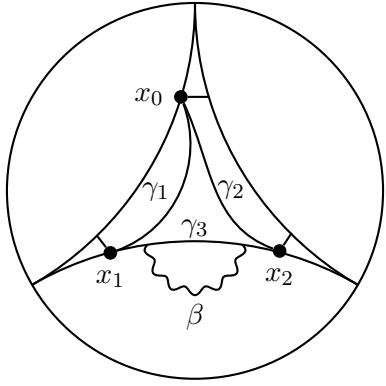 
\end{proof}

\begin{remark}\label{remark:Morse_qausi-geoedsic}
For simplicity, we have stated the results in this section in terms of Morse geodesics. However, if you replace ``$N$--Morse geodesic" with ``$N$--Morse $(\lambda',\varepsilon')$--quasi-geodesic" in any of the results, the conclusions hold with the modification that the constant $D$ or the Morse gauge $N'$ now also depends on $\lambda'$ and $\varepsilon'.$
\end{remark}

\subsection{Triangle centers} 
In this section, we develop a coarse notion of center for ideal geodesic triangles with Morse sides. Having a notion of triangle centers turns out to be the key ingredient to generalizing Paulin's notion of cross-ratio to non-hyperbolic spaces. Throughout this section, let $X$ be a proper geodesic metric space with $|\partial X| \geq 3$. 

For $n \geq 2$ and Morse gauge $N$,  let $\partial X^{(n,N)}$ denote the collection of tuples of $n$ pairwise distinct points in $\partial X$ such that every geodesic between any pair in the tuple is $N$--Morse. If $\partial X^{(3,N)} \neq \varnothing$, then for $K >0$ define 
\[m_X^{N,K}: \partial X^{(3,N)} \rightarrow 2^X\hspace{13pt} \text{by}\] 
\[(a,b,c) \mapsto  \{x \in X : x \text{ is  within } K \text{ of all sides of some geodesic triangle with vertices } a, b, c \}.\]
We call each point in the set  $m_X^{N,K}(a,b,c)$ a \textit{$K$--center} of $(a,b,c).$ 

The goal of this section is to show  for sufficiently large $K$ that $m_X^{N,K}(a,b,c)$ is non-empty  for all $(a,b,c) \in \partial X^{(3,N)}$ (Lemma \ref{lemma:internal_points_exist}) and has uniformly bounded diameter in $X$ (Lemma \ref{lemma:bounded_centers}). First, we present some notation and terminology. 

Given a triangle $T$ with vertices $a,b,$ and $c$, we will often write $T(a,b,c)$ in place of $T$ to emphasize the vertices of $T$. We let $T(a,b)$ denote the oriented side of $T$ from $a$ to $b$, and similarly denote the other sides of $T$. A \textit{$K$--internal triple} of a triangle $T$ is a set of three points, one on each side of $T$, that are pairwise at most distance $K$ apart.

\begin{lemma} \label{lemma:internal_points_exist} 
There exists a constant $K_N$ such that every geodesic triangle in $X$ with all sides  $N$--Morse has a  $K_N$--internal triple. Thus, for all $K \geq K_N$, the map $m_X^{N,K}$ sends every tuple to a non-empty set.  
\end{lemma}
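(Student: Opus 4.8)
The plan is to reduce the infinite (ideal) triangle to a finite geodesic triangle, where we can invoke the slimness result of Lemma \ref{cordes:Morse:geodesics}(2), and then use a connectedness/continuity argument to locate the internal triple. First I would fix $(a,b,c) \in \partial X^{(3,N)}$ and fix an ideal geodesic triangle $T$ with vertices $a,b,c$ and all sides $N$--Morse. Using Corollary \ref{inf:to:finite} (applied as in the proof of Lemma \ref{2-side:Morse}), I can truncate $T$ to a finite geodesic triangle $T'$ with vertices $x_{ab}, x_{bc}, x_{ca}$, where each $x_{ij}$ lies on one side of $T$ and is within a constant $D = D(N)$ of the two ideal vertices' sides it replaces; the new sides $\gamma_{ab}, \gamma_{bc}, \gamma_{ca}$ are geodesics whose endpoints are within $D$ of the corresponding sides of $T$. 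By Lemma \ref{cordes:Morse:geodesics}(1) each $\gamma_{ij}$ is $N''$--Morse for some $N''$ depending only on $N$, and crucially each $\gamma_{ij}$ stays within a constant $D'' = D''(N)$ of the union of the two sides of $T$ meeting at the corresponding (ideal or truncated) endpoints.

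Next I would apply Lemma \ref{cordes:Morse:geodesics}(2) to the finite triangle $T'$: since all three sides of $T'$ are $N''$--Morse, $T'$ is $\delta_{N''}$--slim, with $\delta_{N''}$ depending only on $N$. Standard hyperbolic-triangle geometry then produces a $\delta_{N''}$-internal triple of $T'$: picking a point $y$ on $\gamma_{ab}$ that is within $\delta_{N''}$ of the other two sides $\gamma_{bc}\cup\gamma_{ca}$ realizing the slimness, a short argument (using continuity of distance along $\gamma_{ab}$ and the fact that its two endpoints are respectively close to $\gamma_{ca}$ and $\gamma_{bc}$) yields points $y_{ab} \in \gamma_{ab}$, $y_{bc} \in \gamma_{bc}$, $y_{ca} \in \gamma_{ca}$ pairwise within some constant multiple of $\delta_{N''}$. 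Finally, since each $\gamma_{ij}$ lies within $D''$ of the corresponding pair of sides of the original ideal triangle $T$, the points $y_{ab}, y_{bc}, y_{ca}$ lie within $D'' + (\text{const})\delta_{N''}$ of all three sides of $T$; taking $K_N$ to be this total constant (which depends only on $N$), we get a $K_N$--internal triple of $T$. Since $T$ was an arbitrary ideal triangle with $N$--Morse sides and $K_N$ depends only on $N$, every $(a,b,c) \in \partial X^{(3,N)}$ admits such a triple, which is exactly the statement that $m_X^{N,K}(a,b,c) \neq \varnothing$ for all $K \geq K_N$.

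The main obstacle I anticipate is the connectedness step used to extract the internal triple from slimness: one needs to know that as a point moves along the side $\gamma_{ab}$ from the endpoint near $\gamma_{ca}$ to the endpoint near $\gamma_{bc}$, the "nearest side" switches, so by a discrete intermediate-value argument there is a point roughly equidistant from both. This requires care because $\gamma_{ab}$ may pass in and out of neighborhoods of the other two sides; the clean way is to consider the function measuring (signed) difference $d(\gamma_{ab}(t),\gamma_{bc}) - d(\gamma_{ab}(t),\gamma_{ca})$, note it changes sign along $\gamma_{ab}$ by the slimness and the near-endpoints, apply continuity to find $t_0$ where both distances are comparable and each at most $\delta_{N''}$ plus a bounded error, and then select the nearest points on the other two sides and bound their mutual distance via the triangle inequality. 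A secondary bookkeeping point is making sure every constant introduced — $D$, $N''$, $D''$, $\delta_{N''}$ — depends only on $N$ and not on the particular triple or triangle, which follows because all the cited lemmas give uniform dependence.
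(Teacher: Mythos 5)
Your proposal follows essentially the same route as the paper's proof: truncate the ideal triangle using Corollary \ref{inf:to:finite}, use Lemma \ref{cordes:Morse:geodesics} part (1) to see the resulting finite triangle has uniformly Morse sides, apply slimness from part (2) to extract an internal triple with constants depending only on $N$, and transfer back to the ideal triangle. The only differences are cosmetic: the paper quotes the standard insize fact for slim triangles (Bridson--Haefliger III.H, Proposition 1.17) where you run an intermediate-value argument, and it ends by explicitly replacing each chosen point by a nearby point $q_i$ on the corresponding side of the original triangle $T$ so the triple literally lies on the sides of $T$ as the definition of internal triple requires---a one-line projection step you should add, since your points lie on the truncated triangle (your conclusion as stated already gives non-emptiness of $m_X^{N,K}$).
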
 

\begin{proof}
Let $T$ be a geodesic triangle with  $N$--Morse sides that are denoted by $\alpha_1,\alpha_2$ and $\alpha_3$.  By Corollary \ref{inf:to:finite}, we can find $x_1$ and $x_2$ on $\alpha_3$ such that each $x_i$ is within $D$ of $\alpha_i$, where $D$ is a constant determined by $N$. Similarly, there exists  $x_0$ on $\alpha_1$ such that $x_0$ is within $D$ of $\alpha_2$. Let $\gamma_1$ be a geodesic from $x_0$ to $x_1$, $\gamma_2$ a geodesic from $x_0$ to $x_2$, and $\gamma_3$ the subsegment of $\alpha_3$ connecting $x_1$ and $x_2$. Because the endpoints of $\gamma_1$ (resp. $\gamma_2$) are within $D$ of $\alpha_1$ (resp. $\alpha_2$), and $N$ determines $D$, Lemma \ref{cordes:Morse:geodesics} part (1)  says that $\gamma_1$ and $\gamma_2$ are Morse with gauge depending only on $N$.  Thus there exists an $N'$, depending only on $N$, such that $\gamma_1 \cup \gamma_2 \cup \gamma_3$ is a finite geodesic triangle with $N'$--Morse sides. Hence, by Lemma \ref{cordes:Morse:geodesics} part (2),  there exists $\delta_{N'}$ depending only on $N'$ such that the geodesic triangle $\gamma_1 \cup \gamma_2 \cup \gamma_3$ is 
$\delta_{N'}$--slim. This allows us to select $p_i$ on  $\gamma_i$ so that $d(p_i,p_j)\leq 4 \delta_{N'}$ for all pairs $i,j$ (see \cite{BH} III.H, Proposition 1.17). Because for all $i$, the endpoints of $\gamma_i$ are each within $D$ of the $N$--Morse geodesic $\alpha_i$, there exists $q_i$ on $\alpha_i$ such that $d(q_i,p_i) \leq N(1,4D)$.  
Combining all of this, for all pairs $i,j$ we find
\[d(q_i,q_j) \leq d(q_i,p_i) + d(p_i,p_j) + d(p_j,q_j) \leq 4\delta_{N'} + 2N(1,4D). \]
Therefore, if we let $K_N = 4\delta_{N'} + 2N(1,4D)$, then $\{q_1,q_2,q_3\}$ has the desired property.
\end{proof}

\begin{lemma} \label{lemma:bounded_centers} 

For all $N$ and $K$, there exists a constant $M$, depending only on $N$ and $K$, such that for all $(a,b,c) \in \partial X^{(3,N)}$  \[\diam ( m^{N,K}_X(a,b,c)) \leq M.\] 
\end{lemma}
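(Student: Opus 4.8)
The plan is to show that any two $K$--centers of $(a,b,c)$ are uniformly close by exploiting the slimness of triangles with Morse sides together with the fact that two points far apart on an $N$--Morse geodesic cannot both be near the ``same place'' on another side. Suppose $x$ and $y$ both lie in $m_X^{N,K}(a,b,c)$; by definition there are geodesic triangles $T = T(a,b,c)$ and $T' = T'(a,b,c)$ (possibly different choices of sides) with $x$ within $K$ of every side of $T$ and $y$ within $K$ of every side of $T'$. First I would upgrade the Morse gauges: by Corollary \ref{cor:Morse_end-points} every geodesic between a pair among $a,b,c$ is $N'$--Morse for some $N'$ depending only on $N$, so all six sides of $T$ and $T'$ are $N'$--Morse. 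Since $|\partial X| \geq 3$ and the three points are distinct, the sides are genuinely bi-infinite or rays and all relevant neighborhood/slimness constants below will depend only on $N$ (through $N'$) and $K$.

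Next I would build a finite geodesic triangle that ``traps'' both $x$ and $y$, imitating the truncation argument used in Lemma \ref{lemma:internal_points_exist} and Lemma \ref{2-side:Morse}. Using Corollary \ref{inf:to:finite}, choose points far enough out along the three sides $T(a,b), T(b,c), T(c,a)$ so that the truncated finite triangle $\Delta$ has vertices within a bounded distance $D = D(N')$ of the three sides, and arrange the truncation points to be far enough out that $x$ (and likewise $y$, which is within $2K + (\text{bounded})$ of the same sides) lies ``inside'' the truncated region, i.e.\ near all three sides of $\Delta$. By Lemma \ref{cordes:Morse:geodesics} parts (1) and (3) the sides of $\Delta$ are $N''$--Morse with $N''$ depending only on $N'$, and by part (2) the triangle $\Delta$ is $\delta_{N''}$--slim. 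The upshot is that both $x$ and $y$ are within a constant $C = C(N, K)$ of each of the three sides of the single finite $\delta_{N''}$--slim triangle $\Delta$.

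Finally I would conclude by the standard fact that the set of points within a bounded distance of all three sides of a slim triangle has bounded diameter. Concretely: pick a side $\sigma$ of $\Delta$ and let $x_\sigma, y_\sigma$ be nearest-point projections of $x,y$ to $\sigma$, so $d(x,x_\sigma), d(y,y_\sigma) \leq C$. If $d(x,y)$ were large, then $d(x_\sigma, y_\sigma)$ would be large, so the subsegment $[x_\sigma, y_\sigma] \subseteq \sigma$ is long; a point $p$ on this subsegment far (distance $> \delta_{N''} + C$) from both endpoints is, by $\delta_{N''}$--slimness, within $\delta_{N''}$ of one of the other two sides, say $\tau$. But $x$ and $y$ are both within $C$ of $\tau$ as well, and $[x_\sigma,y_\sigma]$ together with the geodesics $[x,x_\sigma],[x,\tau\text{-point}]$ etc.\ forces $p$ to be near an endpoint region — more cleanly, one runs the argument of \cite{BH} III.H Proposition 1.17 (used already in Lemma \ref{lemma:internal_points_exist}) to see that the ``center region'' of a $\delta$--slim triangle has diameter $O(\delta)$, and every point within $C$ of all three sides lies within $O(\delta + C)$ of it. Setting $M$ equal to the resulting bound (a function of $\delta_{N''}$ and $C$, hence ultimately of $N$ and $K$ alone) gives $d(x,y) \leq M$, and since $x,y \in m_X^{N,K}(a,b,c)$ were arbitrary, $\diam(m_X^{N,K}(a,b,c)) \leq M$.

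The main obstacle I anticipate is the bookkeeping in the second step: ensuring that a \emph{single} truncated finite triangle simultaneously captures \emph{both} $x$ and $y$ near all three of its sides, with the truncation points chosen uniformly. One must first observe that $x$ and $y$ are each within $K$ of the infinite sides of $T$ and $T'$ respectively, and that by Lemma \ref{Morse:end-points} the corresponding sides of $T$ and $T'$ (sharing the same ideal endpoints) are within a bounded Hausdorff distance of each other; hence both $x$ and $y$ are within a common constant of all three sides of $T$. Only then can one truncate far enough out to trap both points, and care is needed so that the choice of how far to truncate depends only on $N$ and $K$ and not on the points $a,b,c$ themselves — this is exactly the kind of uniformity Corollary \ref{inf:to:finite} is designed to provide.
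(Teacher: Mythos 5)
Your proposal is correct, but its second half takes a genuinely different route from the paper's. The first step is the same: both you and the authors use Lemma \ref{Morse:end-points} to reduce to bounding the set of points uniformly close to the three sides of a single fixed ideal triangle $T(a,b,c)$ (your appeal to Corollary \ref{cor:Morse_end-points} is redundant here, since by definition of $\partial X^{(3,N)}$ every geodesic between a pair of $a,b,c$ is already $N$--Morse). From there the paper does not truncate: it takes the $K_N$--internal triple of $T$ supplied by Lemma \ref{lemma:internal_points_exist} and shows each $K$--center $x$ lies within $3K+2D$ of one internal point, by concatenating a geodesic between two internal points with the initial segment of a side to get a $(1,2K_N)$--quasi-geodesic and invoking Lemma \ref{var:cordes}. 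You instead truncate $T$, for each pair of centers $x,y$, to a finite Morse-sided triangle $\Delta$ that traps both points near all three of its sides, and then conclude from the finite-triangle center fact. Your worry about uniformity is resolved exactly as you suggest: the truncation points may depend on $x$, $y$, and the triangle (Corollary \ref{inf:to:finite} lets you push them arbitrarily far out while staying $D$--close to the adjacent sides), but the Hausdorff bounds from Lemma \ref{cordes:Morse:geodesics}(1), the slimness constant, and hence the final bound depend only on $N$ and $K$, which is all that is required. One simplification: your last step needs no slimness at all, since for a fixed finite geodesic triangle with vertices $u,v,w$, a point within $C$ of all three sides lies within $4C$ of the internal point $i\in[u,v]$ with $d(u,i)=(v,w)_u$, by a direct computation with Gromov products; so in your argument the Morse hypotheses are only used for the reduction to one triangle and for the truncation. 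The paper's route is shorter because it reuses the internal triple already built in Lemma \ref{lemma:internal_points_exist}; yours trades that for an elementary argument about finite triangles, at the cost of the truncation bookkeeping.
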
 

\begin{proof}
Without any loss of generality, we can assume $K\geq K_N$, where $K_N$ is the constant from Lemma \ref{lemma:internal_points_exist}. Choose $(a,b,c) \in \partial X^{(3,N)}$ and a geodesic triangle $T(a,b,c)$.  Let $\{i_{ac},i_{ab}, i_{bc}\}$ denote a $K_N$--internal triple of $T$. By Lemma \ref{Morse:end-points}, it is enough to show that the diameter of 
\[\mathcal{S}=\{ x \in X: x \text{ is within } K\text{ of each side of } T\}\] is bounded above by a constant depending only on $N$ and $K$. To show $\diam ( \mc{S})$ is bounded, we will prove that the distance from each point $x\in \mathcal{S}$ to some point in $\{i_{ac},i_{ab}, i_{bc}\}$  is bounded above by a constant depending only on $K$ and $N$. 

 Let $p$ and $q$ denote points on $T(a,b)$ and $T(a,c)$ respectively satisfying 
 \begin{equation} \label{eq:close_to_sides1} 
 d(x,p) \leq K \hspace{10pt} \text{ and } \hspace{10pt} d(x,q) \leq K. 
 \end{equation}  
By permuting the labels $a,b,c$ if needed, we may assume that $p$ comes before $i_{ab}$ on $T(a,b)$ and that $q$ comes after $i_{ac}$ on $T(a,c)$. Now the concatenation of a geodesic from $i_{ab}$ to $i_{ac}$ with the segment of $T(a,b)$ from $a$ to $i_{ab}$ is a $(1,2K_N)$--quasi-geodesic. By Lemma \ref{var:cordes}, there exists a constant $D$ determined by $N$ and a point $p'$ before $i_{ac}$ on $T(a,c)$ such that 
\begin{equation} \label{eq:close_to_sides2} d(p',p) \leq D.
\end{equation} 
Combining Inequalities (\ref{eq:close_to_sides1}) and (\ref{eq:close_to_sides2}),  we have 

\begin{equation} \label{eq:bound} d(p',q) \leq d(p',p)+d(p,x)+d(x,q) \leq D+2K.
\end{equation} 
Because $i_{ac} $ is between $p'$ and $q$ on the geodesic $T(a,c)$, Inequalities (\ref{eq:close_to_sides1}) and  (\ref{eq:bound}) imply that 
\[d(x,i_{ac}) \leq d(x,p)+d(p,p') +d(p',i_{ac}) \leq 3K+2D, \]
completing the proof. 

\end{proof} 

\subsection{Morse center spaces} \label{subsec:Morse_center_space}

Our main results will apply to Morse center spaces. We say that a proper geodesic metric space $X$ is an $N$--\textit{Morse center space} if there exists $K$ such that 
\[X= \bigcup \limits_{(a,b,c) \in \partial X^{(3,N)}} m^{N,K}_X(a,b,c).\]
That is, an $N$--Morse center space is one where every point in $X$ is a  $K$--center of some ideal geodesic triangle with $N$--Morse sides. We say that $X$ is a \textit{Morse center space} if it is an $N$--Morse center space for some gauge $N$. 

By Proposition \ref{prop:cordes_visual}, if $|\partial X| \geq 3$, then for some gauge $N$, there exists an ideal geodesic triangle $T$ with each side $N$--Morse. If $X$ also admits a cocompact group action by isometries, then every point in $X$ will be uniformly close to each side of some translate of $T$, and thus $X$ is a Morse center space. In particular, every finitely generated group equipped with a word metric is a Morse center space as long as its Morse boundary has at least three points.

\subsection{Cross-ratios and quasi-m\"obius and 2--stable maps} \label{subsec:quasimobius} 

Let $X$ and $Y$ be proper geodesic metric spaces and $N$ a Morse gauge. 
For $(a,b,c,d) \in \partial X^{(4,N)}$, define its \emph{$N$--cross-ratio} to be
\[ [a,b,c,d]_N = \diam ( m_X^{N,K_N}(a,b,c) \cup m_X^{N,K_N}(a,d,c)),\]
where $K_N$ is the constant in Lemma \ref{lemma:internal_points_exist}.
This definition of cross-ratio generalizes both the cross-ratio for CAT(0) spaces (see \cite{CM} Lemma 3.7) and hyperbolic spaces \cite{paulin} (see Section \ref{sec:paulin_cross_ratio}). 

A map $h:\partial {X} \rightarrow \partial Y$ is \emph{2--stable} if for every $N$, there exists an $N'$ such that \[h(\partial {X}^{(2,N)}) \subseteq \partial {Y}^{(2,N')}.\] 
We say that a 2--stable map $h:\partial X \rightarrow \partial Y$ is \emph{quasi-m\"obius} if  for every pair of Morse gauges $N$ and $N'$ satisfying $h(\partial X^{(2,N)}) \subseteq \partial Y^{(2,N')},$ there exists a continuous increasing function \linebreak $\psi:[0,\infty) \rightarrow [0,\infty)$ such that for all $(a,b,c,d) \in \partial X^{(4,N)}$ we have 
\[ [h(a), h(b), h(c), h(d)]_{N'}  \leq \psi([a,b,c,d]_N). \] 

\begin{remark}
Let $p,p' \in X$ be two choices of basepoint and $\phi:\partial X_{p'} \rightarrow \partial X_{p}$ the canonical homeomorphism. Because $\phi(\partial X^{(2,N)}_{p'})=\partial X^{(2,N)}_p$, a homeomorphism $h:\partial X_p \rightarrow \partial Y$ is $2$--stable if and only if $h \circ \phi$ is.  Further observe that because the sets of $K$--centers for $(a,b,c)$ and $(\phi(a), \phi(b), \phi(c))$ are identical, the $N$--cross-ratio is basepoint independent. Thus, $h$ is quasi-m\"obius if and only if $h \circ \phi$ is. 
\end{remark}

\subsection{Homeomorphisms that preserve distance between centers} \label{subsec:controlled_distortion}
In this section, we will show that in spaces with the small cross-ratio property (Definition \ref{def:small_cross} below), $2$--stable, quasi-m\"obius boundary homeomorphisms increase the distances between \linebreak $K$--centers a controlled amount (Lemma \ref{well:def}). Our approach is a generalization of the one taken in \cite{CM}, and just as in the CAT(0) setting, this is the critical step in being able to build a quasi-isometry from a quasi-m\"obius boundary homeomorphism. In this section, let $X$ and $Y$ be proper geodesic metric spaces.

\begin{definition} \label{def:small_cross} 
We say that  $X$ has the \emph{small cross-ratio property} if for each Morse gauge $N$, there exists  $C>0$ such that for any $(a,b,c,d) \in \partial X^{(4,N)}$ at least one of $[a,b,c,d]_N$,  $[a,c,b,d]_N$, or $[b,a,c,d]_N$ is less than $C$.
\end{definition}

If $[a,b,c,d]_N < C$, the $K_N$--centers of $(a,b,c)$ and $(a,d,c)$ are $C$--close, and we call the move from  $(a,b,c)$ to $(a,d,c)$ a \emph{small flip}. If $X$ has the small cross-ratio property, then given $(a,b,c,d) \in \partial {X}^{(4,N)}$, we can always use a small flip to move from the tuple $(a,b,c)$ to one of $(a,d,c)$, $(a,b,d)$, or $(b,d,c)$. Examples of spaces with the small cross-ratio property include hyperbolic spaces (Corollary \ref{cor:paulin1}, Lemma \ref{lem:paulin2}), CAT(0) spaces (\cite{CM} Lemma 4.1), and hierarchically hyperbolic groups (Proposition \ref{small:cross-ratio}).

Lemma \ref{well:def} below  is the generalization of  \cite{CM} Proposition 4.2 to all proper metric spaces. While stated in terms of CAT(0) spaces, the core of the argument presented by Charney-Murray does not utilize the CAT(0) hypothesis. After proving the following technical lemma, we will be able to follow their proof verbatim. 

\begin{lemma}\label{3-6:Morse}
Let $(a,b,c),(u,v,w) \in \partial{X}^{(3,N)}$ and $L \geq 0$ and $K \geq K_N$. If 
\[\diam( m_X^{N,K}(a,b,c) \cup m_X^{N,K}(u,v,w)) \leq L,\] then there exists $N'$ depending only on $N$, $L,$ and $K$ such that any geodesic with both endpoints in $\{a,b,c,u,v,w\}$ is $N'$--Morse.
\end{lemma}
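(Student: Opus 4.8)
The plan is to leverage the hypothesis that the $K$--centers of $(a,b,c)$ and $(u,v,w)$ are all contained in a set of diameter $\leq L$, so that there is a single point $z \in X$ that is simultaneously uniformly close to all three sides of \emph{some} geodesic triangle $T(a,b,c)$ and all three sides of \emph{some} geodesic triangle $T(u,v,w)$. Fix such a $z$, and fix a $K_N$--internal triple on each of these triangles (which exists by Lemma \ref{lemma:internal_points_exist}); combining with the diameter bound, all six points of these two internal triples lie within distance $2L + 2K_N$ of $z$. The key observation is then: any geodesic $\sigma$ from a vertex in $\{a,b,c\}$ to a vertex in $\{u,v,w\}$ can be broken at $z$ (coarsely) into two geodesic rays, one asymptotic to a side of $T(a,b,c)$ emanating ``past'' $z$ and one asymptotic to a side of $T(u,v,w)$, each of which is $N$--Morse by hypothesis and Convention (subgeodesics of $N$--Morse geodesics are $N$--Morse). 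So $\sigma$ is close to a concatenation of two $N$--Morse rays sharing a common near-endpoint near $z$.

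The main steps, in order. First, extract $z$ and the near-$z$ structure as above. Second, for a target geodesic, say from $a$ to $u$: choose a side of $T(a,b,c)$ with endpoint $a$ — say $T(a,b)$ — which is $N$--Morse and passes within $K$ of $z$; similarly choose $T(u,v)$ within $K$ of $z$. Let $a'$ be a point on $T(a,b)$ within $K$ of $z$ and $u'$ a point on $T(u,v)$ within $K$ of $z$, so $d(a',u') \leq 2K$. Now the geodesic ray $T(a,a') \subseteq T(a,b)$ is $N$--Morse, the ray $T(u,u') \subseteq T(u,v)$ is $N$--Morse, and the concatenation $\rho$ of $T(a,a')$, a geodesic $[a',u']$, and $T(u',u)$ (reversed) is a quasi-geodesic from $a$ to $u$ with uniformly controlled constants — this is a double application of Lemma \ref{(3,0)-lemma} (or rather a two-sided variant: concatenating a bounded segment between two Morse rays based at the junction), giving uniform $(\lambda_0,\varepsilon_0)$ depending only on $N$ and $K$. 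Third, invoke Lemma \ref{2-side:Morse} (triangle with two $N$--Morse sides $T(a,a')$ and $T(u,u')$... ) — more precisely, build the geodesic triangle on vertices $a$, $u$, and the junction region, conclude its third side, a geodesic from $a$ to $u$, is $N''$--Morse for $N''$ depending only on $N$ (and on $K$, via the bound on $d(a',u')$, using Remark \ref{remark:Morse_qausi-geoedsic}). Fourth, apply Corollary \ref{cor:Morse_end-points}: since \emph{some} geodesic from $a$ to $u$ is $N''$--Morse, \emph{every} geodesic from $a$ to $u$ is $N'$--Morse, with $N'$ depending only on $N''$, hence only on $N$, $L$, $K$. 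Fifth, the pairs $\{a,b\},\{a,c\},\{b,c\}$ are $N$--Morse by the hypothesis $(a,b,c) \in \partial X^{(3,N)}$ (and the $(u,v,w)$ pairs similarly); running the above for each of the nine ``mixed'' pairs and taking the maximum gauge yields the uniform $N'$.

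The main obstacle is the third step: making the concatenation-to-Morse argument run cleanly in the bi-infinite-ray setting with uniform constants. Lemma \ref{2-side:Morse} as stated handles geodesic triangles with possibly infinite sides where two sides are Morse, and its proof already does the ``truncate and use $K_N$--internal triples / slimness'' bookkeeping; so the cleanest route is probably to form the triangle with ideal vertices $a$ and $u$ directly — take $T(a,b)$ as one side (Morse), take $T(u,v)$ as another side (Morse), and observe these two rays come within $2K + 2L$ of each other near $z$, so that the triangle $T(a,b) \cup [\text{near-}z\text{ bridge}] \cup T(u,v)$ degenerates appropriately; then the third side from $a$ to $u$ is controlled. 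One must be slightly careful that $T(a,b)$ and $T(u,v)$ are not literally a triangle's sides — the fix is to pass through the finite geodesic $[a', u']$ and absorb it using the uniform bound $d(a',u') \le 2K$ exactly as in the truncation argument of Lemma \ref{2-side:Morse}, so no genuinely new estimate is needed, only a careful reassembly of existing ones. I would also note the degenerate cases where a target pair already lies in $\{a,b,c\}$ or in $\{u,v,w\}$ (handled trivially by the $(3,N)$ hypothesis), and the case where the two vertices coincide (no geodesic needed).
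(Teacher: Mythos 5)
Your skeleton (pick $L$--close centers of the two triples, find $a'$ on the $N$--Morse geodesic $T(a,b)$ and $u'$ on $T(u,v)$ with $d(a',u')$ bounded in terms of $K$ and $L$, then finish with a two-Morse-sides argument) is the same as the paper's, but the load-bearing step fails. The concatenation $\rho = T(a,a')\cup[a',u']\cup T(u',u)$ need not be a quasi-geodesic with constants depending only on $N,K,L$: two Morse rays based at nearby points but asymptotic to \emph{different} boundary points can fellow-travel for an arbitrarily long time before diverging. Concretely, in a tree let $b,c,v,w$ branch off at a vertex $o$, and let the rays to $a$ and to $u$ share a common prefix $[o,p]$ of length $R$ before separating. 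Then $o$ is a $K$--center of both ideal triangles, so your hypothesis holds with $L$ comparable to $K$, yet $\rho$ traverses (most of) $[o,p]$ twice, a backtrack of length roughly $2R$ with $R$ arbitrary; moreover the actual geodesic from $a$ to $u$ stays at distance $R$ from the centers, so no argument forcing it near $z$ can work. Lemma \ref{(3,0)-lemma} cannot be ``applied twice'' to save this: its closest-point hypothesis is exactly what excludes backtracking, and it concatenates a bounded segment with a \emph{single} quasi-geodesic, not two infinite rays. Your fallback --- take $T(a,b)$ and $T(u,v)$ as two sides and ``absorb'' the bridge as in Lemma \ref{2-side:Morse} --- has the same defect: those two geodesics are not two sides of a geodesic triangle whose third side joins $a$ to $u$, and the missing ingredient, namely that some geodesic ray from the junction point to the far ideal vertex $u$ is Morse with controlled gauge, is not a routine reassembly of the quoted lemmas; it is precisely the content of the paper's proof.

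The paper's way around this is to concatenate the bounded bridge with only \emph{one} infinite ray: with $x'$ on $T(a,b)$ near a center of $(a,b,c)$ and $y'$ on $T(u,v)$ near a center of $(u,v,w)$, the path $\beta'$ consisting of the geodesic $[y',x']$ (of length at most $2K+L$) followed by a geodesic ray from $x'$ to $u$ is automatically a $(1,2(2K+L))$--quasi-geodesic. Since $\beta'$ starts at $y'\in T(u,v)$, has the same initial point as the $N$--Morse subray of $T(u,v)$ toward $u$, and is asymptotic to it, Lemma \ref{Morse:end-points} applies and shows the ray from $x'$ to $u$ is $N'$--Morse with $N'=N'(N,K,L)$. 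Now the ideal triangle with vertices $a$, $u$, $x'$ has two Morse sides ($[x',a)\subseteq T(a,b)$ and $[x',u)$), so Lemma \ref{2-side:Morse} makes \emph{any} geodesic from $a$ to $u$ Morse, which also renders your separate ``some-to-every'' step via Corollary \ref{cor:Morse_end-points} unnecessary. Your handling of the degenerate pairs is fine, and note the bound $d(a',u')\le 2K$ should read $2K+L$, which is where the dependence on $L$ enters.
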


\begin{proof}
Without loss of generality, it suffices to just check for any geodesic between $a$ and $u$.  Fix  $x \in m_X^{N,K}(a,b,c)$ and  $y \in m_X^{N,K}(u,v,w)$. Choose  an $N$--Morse geodesic $\alpha$ between $a$ and $b$ so that there is a point $x'$ on $\alpha$ within $K$ of $x$.   Let $\alpha'$ be the $N$--Morse subgeodesic of $\alpha$ from $x'$ to $a$. Now choose an $N$--Morse geodesic $\beta$ between $u$ and $v$ so that  there is a point $y'$ on $\beta$ within $K$ of $y$.  

Let $\beta'$ be the concatenation of a geodesic between $y'$ and $x'$ with any geodesic ray from $x'$ to $u$. By the triangle inequality, $d_{{X}}(x',y') \leq 2K+L$, so $\beta'$ is a $(1,2(2K+L))$--quasi-geodesic. Thus Lemma \ref{Morse:end-points} tells us that there exists a Morse gauge $N' \geq N,$ depending only on $N$, $K,$ and $L$, such that the subray of $\beta'$ from $x'$ to $u$ is $N'$--Morse.  Because $\alpha$ (and thus $\alpha'$) is also $N'$--Morse,  any geodesic from $a$ to $u$ is the third side of a triangle with two $N'$--Morse sides and hence  $N''$--Morse by Lemma \ref{2-side:Morse}, where $N''$ depends only on $N$, $K,$ and $L$.

\end{proof}
To be self-contained, we now present the Charney-Murray proof of Lemma \ref{well:def}. 
\begin{lemma}\label{well:def}
Suppose $X$ has the small cross-ratio property and  $h: \partial {X} \rightarrow \partial {Y}$ is a $2$--stable, quasi-m\"obius homeomorphism.  Suppose $h(\partial X^{(3,N)}) \subseteq \partial Y^{(3,N_h)}$ for some Morse gauges $N$ and $N_h$. Then for any $L\geq 0$ and $K \geq K_N$, there exists a constant $B$, depending on $N, N_h, K$, and $L$ only, such that for any $(a,b,c) , (u,v,w) \in \partial X^{(3,N)}$ if
\begin{equation} \label{eq:center_bound}  \diam_X (m_X^{N,K}(a,b,c)\cup m_X^{N,K}(u,v,w)) \leq L,
\end{equation} 
then
\[\diam_Y ( m_Y^{N_h,K}(h(a), h(b), h(c))\cup m_Y^{N_h,K}(h(u), h(v),  h(w))) \leq B.\]
\end{lemma}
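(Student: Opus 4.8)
The plan is to reduce the general case to a chain of \emph{small flips} using the small cross-ratio property, handle a single small flip by a direct application of the quasi-möbius and 2--stable hypotheses together with Lemma \ref{3-6:Morse}, and then chain the estimates. First I would fix $(a,b,c),(u,v,w)\in\partial X^{(3,N)}$ satisfying Inequality (\ref{eq:center_bound}). If two of the six points coincide or if the tuples $(a,b,c)$ and $(u,v,w)$ differ by fewer than three coordinates, the argument is shorter; the interesting case is when all six points are distinct, and one wants a path in ``triple space'' from $(a,b,c)$ to $(u,v,w)$ each of whose steps changes exactly one coordinate. The key point is that by Lemma \ref{3-6:Morse}, the hypothesis (\ref{eq:center_bound}) forces every geodesic between any two of $a,b,c,u,v,w$ to be $N'$--Morse for a uniform $N'=N'(N,K,L)$, so the entire configuration lives in $\partial X^{(6,N')}$ and we may work with a single enlarged gauge throughout.

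Next I would analyze one small flip. Suppose $(p,q,r)$ and $(p,s,r)$ lie in $\partial X^{(3,\tilde N)}$ and $[p,q,r,s]_{\tilde N}<C$ (the constant from the small cross-ratio property at gauge $\tilde N$); by definition of the cross-ratio this says $\diam(m_X^{\tilde N,K_{\tilde N}}(p,q,r)\cup m_X^{\tilde N,K_{\tilde N}}(p,s,r))<C$. Since $h$ is quasi-möbius, there is a continuous increasing $\psi$ with $[h(p),h(q),h(r),h(s)]_{N_h}\le\psi(C)$, i.e.\ $\diam_Y(m_Y^{N_h,K_{N_h}}(h(p),h(q),h(r))\cup m_Y^{N_h,K_{N_h}}(h(p),h(s),h(r)))\le\psi(C)$. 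One then needs to pass from the fixed scale $K_{N_h}$ back to the scale $K$ in the statement: by Lemma \ref{lemma:bounded_centers} each of $m_Y^{N_h,K}(\cdot)$ and $m_Y^{N_h,K_{N_h}}(\cdot)$ has diameter bounded in terms of $N_h$ and $K$, and they are non-empty for $K\ge K_{N_h}$ by Lemma \ref{lemma:internal_points_exist}, and any two of these (sharing a triple) are within bounded distance; combining these observations gives a bound of the form $\diam_Y(m_Y^{N_h,K}(h(p),h(q),h(r))\cup m_Y^{N_h,K}(h(p),h(s),h(r)))\le B_0$ with $B_0=B_0(N_h,K,C)$. Thus one small flip on the $X$ side produces a bounded jump on the $Y$ side.

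The third step is to assemble the flips. The small cross-ratio property guarantees that from a triple $(x,y,z)\in\partial X^{(4,\tilde N)}$-type configuration one can always reach $(x,w,z)$, $(x,y,w)$, or $(y,w,z)$ by a small flip; iterating, I would construct a bounded-length sequence of triples $(a,b,c)=T_0,T_1,\dots,T_k=(u,v,w)$ (some fixed $k$, say $k\le 6$ after suitable relabelling, since each flip can be arranged to install one of the target coordinates while keeping the others) with each consecutive pair differing by a small flip at gauge $N'$. Applying the single-flip estimate to each step and using the triangle inequality on diameters (all the intermediate center sets are non-empty and pairwise overlapping along shared triples), the diameters telescope: $\diam_Y(m_Y^{N_h,K}(h(a),h(b),h(c))\cup m_Y^{N_h,K}(h(u),h(v),h(w)))\le k\cdot B_0 + (\text{bounded overlap corrections})=:B$, with $B$ depending only on $N,N_h,K,L$ as required.

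The main obstacle I expect is the bookkeeping in the third step: verifying that the small cross-ratio property really does let one connect an \emph{arbitrary} pair of triples on the same six-point set by a \emph{uniformly} bounded number of small flips, with every intermediate triple remaining in $\partial X^{(3,N')}$ so that Lemma \ref{3-6:Morse} and the quasi-möbius bound keep applying at a controlled gauge. One must be careful that a ``flip'' $[a,b,c,d]_N$ compares $(a,b,c)$ with $(a,d,c)$ — it fixes the \emph{first and third} coordinates — so reaching an arbitrary permutation/substitution of coordinates requires composing several such moves and possibly relabelling; checking that this can always be done in a bounded number of steps, and that the relevant cross-ratios are indeed the ``small'' one among the three alternatives, is the delicate combinatorial heart of the argument. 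The scale-change bookkeeping (moving between $K$ and $K_{N_h}$, $K_N$, etc.) in step two is routine given Lemmas \ref{lemma:internal_points_exist} and \ref{lemma:bounded_centers}, and the Morse-gauge control is exactly what Lemma \ref{3-6:Morse} was proved to supply.
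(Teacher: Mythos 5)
Your reduction to a uniform gauge via Lemma \ref{3-6:Morse}, your treatment of a single small flip through the quasi-m\"obius hypothesis, and the final rescaling via Lemmas \ref{lemma:internal_points_exist} and \ref{lemma:bounded_centers} all match the paper's argument. The genuine gap is exactly the step you flag as the ``combinatorial heart'' and then leave unresolved: you propose to join $(a,b,c)$ to $(u,v,w)$ by a chain in which \emph{every} consecutive pair differs by a small flip, arguing that each flip ``can be arranged to install one of the target coordinates while keeping the others.'' The small cross-ratio property does not permit this arrangement: applied to a $4$--tuple it only guarantees that \emph{some one} of the three possible replacements is small, and you have no control over which. For instance, after installing $u$ to reach $\{a,u,c\}$, the only small flip involving $v$ might be the one replacing $u$, undoing your progress; there is no evident uniform bound on the length of a pure small-flip chain, and no argument is given that such a chain exists at all.

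The paper resolves this differently, and the resolution is the missing idea in your proposal: one does \emph{not} need every step to be a small flip, only that consecutive triples share two entries (so that the diameter of the union of their center sets is literally a cross-ratio) and that this cross-ratio is \emph{bounded}, since the quasi-m\"obius inequality converts any bound, not just a small one, into a bound on the $Y$--side. Concretely, the paper builds a chain $V_1=(a,b,c),V_2,V_3,V_4,V_5=(u,v,w)$ in which the steps $V_1\to V_2$, $V_2\to V_3$ and $V_4\to V_5$ are small flips produced by working \emph{from both ends}, while the middle step $V_3\to V_4$ is generally not small; its cross-ratio is instead bounded by $L'+3C$ using the hypothesis (\ref{eq:center_bound}) (inflated to the bound (\ref{eq:bound_on_center_sets}) at scale $K_{N'}$) together with the triangle inequality, because the accumulated flips have moved the centers of $V_1$ and $V_5$ only a bounded amount. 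Without this device your telescoping estimate has nothing to telescope over, so the proof cannot be completed as written. A secondary, fixable issue: after enlarging the gauge to $N'$ you apply quasi-m\"obius with target gauge $N_h$, but the definition requires a gauge $N''$ with $h(\partial X^{(2,N')})\subseteq \partial Y^{(2,N'')}$ supplied by $2$--stability, followed by a final adjustment back to $N_h$ and $K$ via Lemma \ref{lemma:bounded_centers}, as in the paper.
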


\begin{proof}
If either $m_Y^{N_h,K}(h(a), h(b), h(c))$ or $m_Y^{N_h,K}(h(u), h(v),  h(w))$ is empty, then we are done by Lemma \ref{lemma:bounded_centers}. Suppose both are non-empty. By Lemma \ref{3-6:Morse}, there exists $N'$ depending only on $N$, $K$, and $L$ such that $N\leq N'$ and any geodesic with both endpoints in $\{a,b,c,u,v,w\}$ is $N'$--Morse.  Note that $m_X^{N',K}(a,b,c) = m_X^{N,K}(a,b,c)$ because every geodesic between a pair of $a,b,c$ is $N$--Morse. Similarly, $m_X^{N',K}(u,v,w) = m_X^{N,K}(u,v,w).$ 
So Lemma \ref{lemma:bounded_centers} together with assumption (\ref{eq:center_bound}) implies that there exists $L'$ depending only on $L$ and $N'$ (and thus only on $L, K$, and $N$) such that 
\begin{equation} \label{eq:bound_on_center_sets}
\diam_X(m_X^{N',K_{N'}}(a,b,c) \cup m_X^{N',K_{N'}}(u,v,w)) \leq L'.
\end{equation} 
 Let $C$ be the constant given by the small cross-ratio property for $N'$, and for convenience, let $[a,b,c,d] = [a,b,c,d]_{N'}$. We will obtain a sequence of tuples $V_1,\hdots,V_5\in \partial X^{(3,N')}$ such that:

\begin{itemize}
    \item Each $V_i$ has entries in $\{a,b,c,u,v,w\}$.
    \item $V_1 = (a,b,c)$ and $V_5 = (u,v,w)$.
    \item   For $1\leq i \leq 4$, $V_i$ and $V_{i+1}$ share two entries and the diameter of the union of the \linebreak $K_{N'}$--centers of $V_i$ and $V_{i+1}$ is bounded by a constant depending only on $L$, $K$, and $N.$
\end{itemize}

Let $V_1 = (a,b,c)$ and $V_5 = (u,v,w)$. Because $X$ has the small cross-ratio property, at least one of  $[a,b,c,u]$,  $[a,c,b,u]$, or $[b,a,c,u]$  is smaller than $C$.  Without loss of generality, we can assume $[a,b,c,u]$ is small, and we can perform a small flip from $(a,b,c)$ to $(a,u,c)$. Let $V_2 = (a,u,c)$. There are two cases for $V_3$ and $V_4$.

\textbf{Case 1}: If  $[a,u,c,w]$ or $[a,u,c,v]$ is at least $C$, then we can perform a small flip from $(a,u,c)$ onto one of $(a,u,w)$, $(w,u,c)$, $(a,u,v)$, or $(v,u,c)$. In this case we let $V_3$ be the tuple from the list above separated from $(a,u,c)$ by a small flip and define $V_4=(u,v,w)$. 

\textbf{Case 2}: If both $[a,u,c,w]$ and $[a,u,c,v]$ are smaller than $C$, we can perform small flips from $(a,u,c)$ onto both $(a,w,c)$ and $(a,v,c)$. 
Applying the small cross-ratio property to $(u,v,w,a)$, we define $V_4$ to be the tuple in the list 
$(u,a,w)$, $(u,v,a)$, $(a,v,w)$ that differs from $(u,v,w)$ by a small flip.  We now define $V_3$ to be whichever of $(a,w,c)$ or $(a,v,c)$ shares two entries with $V_4.$

In either case,  $\diam( m_X^{N',K_{N'}}(V_i) \cup m_X^{N',K_{N'}}(V_{i+1})) < C$ for $i=1,2,4$. This together with Inequality (\ref{eq:bound_on_center_sets}) and the triangle inequality implies that the diameter of the set of $K_{N'}$--centers of $V_i$ and $V_{i+1}$ is at most $B'=L'+3C$ for all $1\leq i\leq 4$. 

We now apply $h$ to our sequence of tuples. Because $h$ is 2--stable, there exists a gauge $N''$ such that $h(\partial X^{(2,N')}) \subseteq \partial Y^{(2,N'')}$. Because $h$ is quasi-m\"obius, for $1 \leq i \leq 4$, the diameter of the set of $K_{N''}$--centers of $h(V_i)$ and $h(V_{i+1})$ is bounded by a constant depending only on $B'$. Thus, by the triangle inequality, the diameter of the set of $K_{N''}$--centers of $h(V_1)$ and $h(V_5)$ is bounded by a constant determined by $B'$.  If $K \leq K_{N''}$, this completes the proof of the lemma. Otherwise, $K > K_{N''}$, and we can apply Lemma \ref{lemma:bounded_centers} to bound the diameter of the set of $K$--centers of $h(V_1)$ and $h(V_5)$ by a constant depending only on $K,B',$ and $N_h$, and thus only on $K, L$, $N,$ and $N_h$ as desired. 
\end{proof}

\section{Main Theorems}

\subsection{Quasi-isometries induce quasi-m\"obius, 2--stable homeomorpshisms}
\begin{theorem}\label{thrm:quasi-isometry_to_quasi-mobius} 
Let $f:X \rightarrow Y$ be a quasi-isometry between proper geodesic metric spaces. Then the induced homeomorphism $f_*:\partial X \rightarrow \partial Y$ and its inverse are $2$--stable and quasi-m\"obius. 
\end{theorem}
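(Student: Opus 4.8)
The plan is to verify the two conditions---2-stability and the quasi-M\"obius inequality---directly from the fact that $f$ carries $N$-Morse geodesics to (uniformly close to) $N'$-Morse geodesics, with $N'$ depending only on $N$ and the quasi-isometry constants $(\lambda,\varepsilon)$. Since $f$ is a quasi-isometry with quasi-inverse $\hat f$, and $\hat f$ is again a quasi-isometry inducing $(f_*)^{-1}$, it suffices to prove the two properties for $f_*$; the statement for the inverse follows by applying the same argument to $\hat f$. Fix a Morse gauge $N$ and $(a,b)\in\partial X^{(2,N)}$. By Proposition \ref{prop:cordes_visual} there is a bi-infinite $N$-Morse geodesic $\gamma$ from $a$ to $b$ (inflating $N$ if needed via Corollary \ref{cor:Morse_end-points} so every geodesic from $a$ to $b$ is $N$-Morse). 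By Proposition \ref{prop:cordes_inducedmap} and Remark \ref{remark:Morse_qausi-geoedsic}, $f(\gamma)$ is an $N_1$-Morse $(\lambda,\varepsilon)$-quasi-geodesic for some $N_1=N_1(N,\lambda,\varepsilon)$, and by Lemma \ref{Morse:end-points} it lies uniformly Hausdorff close to an $N'$-Morse geodesic with $N'=N'(N,\lambda,\varepsilon)$. Since $f(\gamma)$ limits to $f_*(a)$ and $f_*(b)$ on its two ends, this geodesic realizes $f_*(a)$ and $f_*(b)$, so $(f_*(a),f_*(b))\in\partial Y^{(2,N')}$; this is exactly 2-stability, with $N'$ independent of the particular pair.

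For the quasi-M\"obius condition, fix $N$ and let $N'$ be as produced above (any larger gauge also works, so we may assume $h(\partial X^{(2,N)})\subseteq\partial Y^{(2,N')}$). The key observation is that $f_*$ and the $K$-center map are compatible up to bounded error: if $x\in m_X^{N,K_N}(a,b,c)$, then $x$ is within $K_N$ of all three sides of some $N$-Morse geodesic triangle $T(a,b,c)$; applying $f$, each side maps to an $N_1$-Morse $(\lambda,\varepsilon)$-quasi-geodesic uniformly close to an $N'$-Morse geodesic side of a triangle $T'$ on $f_*(a),f_*(b),f_*(c)$ (using Lemma \ref{Morse:end-points} and Corollary \ref{cor:Morse_end-points} to control gauges), and $f(x)$ is within $\lambda K_N+\varepsilon+(\text{Hausdorff error})=:K^\ast$ of each side of $T'$. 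Hence $f(m_X^{N,K_N}(a,b,c))\subseteq m_Y^{N',K^\ast}(f_*(a),f_*(b),f_*(c))$, and by Lemma \ref{lemma:bounded_centers} the latter has diameter bounded by some $M=M(N',K^\ast)$; combining with $K^\ast\geq K_{N'}$ (enlarge $K^\ast$ if necessary, which only enlarges the center sets and their bounded diameter) we get $m_Y^{N',K^\ast}(f_*(a),f_*(b),f_*(c))$ within bounded Hausdorff distance of $m_Y^{N',K_{N'}}(f_*(a),f_*(b),f_*(c))$. A symmetric argument using $\hat f$ shows $f$ is also coarsely surjective onto the target center set, so $f$ restricts to a quasi-isometry (uniform constants depending only on $N,\lambda,\varepsilon$) between $m_X^{N,K_N}(a,b,c)$ and $m_Y^{N',K_{N'}}(f_*(a),f_*(b),f_*(c))$, and likewise for the $(a,d,c)$ center sets.

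Given $(a,b,c,d)\in\partial X^{(4,N)}$, write $S=m_X^{N,K_N}(a,b,c)\cup m_X^{N,K_N}(a,d,c)$, so $[a,b,c,d]_N=\diam_X S$. The previous paragraph gives that $f(S)$ is within uniformly bounded Hausdorff distance of $m_Y^{N',K_{N'}}(f_*(a),f_*(b),f_*(c))\cup m_Y^{N',K_{N'}}(f_*(a),f_*(d),f_*(c))$, whose diameter is by definition $[f_*(a),f_*(b),f_*(c),f_*(d)]_{N'}$. Since $f$ is a $(\lambda,\varepsilon)$-quasi-isometry, $\diam_Y f(S)\leq \lambda\,\diam_X S+\varepsilon$, so
\[
[f_*(a),f_*(b),f_*(c),f_*(d)]_{N'}\ \leq\ \lambda\,[a,b,c,d]_N+\varepsilon+2D_0,
\]
where $D_0$ is the uniform Hausdorff error; taking $\psi(t)=\lambda t+\varepsilon+2D_0$ (a continuous increasing function on $[0,\infty)$) proves $f_*$ is quasi-M\"obius. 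The same reasoning applied to $\hat f$ handles $(f_*)^{-1}$.

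The main obstacle is bookkeeping the Morse gauges and ``center tolerances'' uniformly: one must check that when $f$ pushes a triangle forward, the resulting sides can be straightened to genuine geodesic triangle sides on the image boundary points with a gauge depending only on $N$ and $(\lambda,\varepsilon)$, and that the $K$-center parameter needed on the target side is likewise uniform---this is where Lemma \ref{Morse:end-points}, Corollary \ref{cor:Morse_end-points}, Lemma \ref{lemma:bounded_centers}, and Remark \ref{remark:Morse_qausi-geoedsic} all get used, and where care is needed to avoid a gauge that secretly depends on the tuple. Everything else is a routine application of the quasi-isometry inequality and the triangle inequality.
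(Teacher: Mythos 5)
Your argument is correct and follows essentially the same route as the paper: push a $K_N$--center of an ideal triangle forward under $f$, straighten the image sides via Lemma \ref{Morse:end-points}, and use the bounded diameter of center sets (Lemma \ref{lemma:bounded_centers}) to get the linear bound $\psi(t)=\lambda t+\varepsilon+2M$, with the superfluous coarse-surjectivity remark not affecting anything. The only (easily repaired) wrinkle is in your 2--stability step: membership in $\partial Y^{(2,N')}$ requires that \emph{every} geodesic from $f_*(a)$ to $f_*(b)$ be uniformly Morse, so after producing one Morse geodesic near $f(\gamma)$ you should also invoke Corollary \ref{cor:Morse_end-points} in $Y$ (the paper instead builds two Morse rays from a point of $f(\alpha)$ and applies Lemma \ref{2-side:Morse}), which also sidesteps having to justify that the bi-infinite image $f(\gamma)$ is itself Morse with controlled gauge.
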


\begin{proof}
Let $f:X \rightarrow Y$ be a $(\lambda, \varepsilon)$--quasi-isometry and $f_*:\partial X \rightarrow \partial Y$ the homeomorphism induced by $f$ (see Proposition \ref{prop:cordes_inducedmap}). Because $f_*^{-1}$ will be induced by the correct choice of quasi-inverse for $f$, it is sufficient to only check the theorem for $f_*$. To see $f_*$ is $2$--stable, consider $(a,b) \in \partial {X}^{(2,N)}$ and let $\alpha$ be an $N$--Morse geodesic between $a$ and $b$. Pick $y$ on $f(\alpha)$. By Proposition \ref{prop:cordes_inducedmap}, there exist two $N'$--Morse geodesic rays based at $y$, one with endpoint $f_*(a)$ and the other with endpoint $f_*(b)$, where $N'$ depends only on $N, \lambda$, and $\varepsilon$. Thus, by Lemma \ref{2-side:Morse}, there exists an $N''$ depending only on $N, \lambda,$ and $\varepsilon$ such that any geodesic with endpoints $f_*(a)$ and $f_*(b)$ is $N''$--Morse, establishing that $f_*$ is 2--stable. 

To see $f_*$ is quasi-m\"obius, let $N$ and $N'$ be Morse gauges such that $f_*(\partial X^{(2,N)}) \subseteq \partial Y ^{(2,N')}$ and consider $(a,b,c,d) \in \partial X^{(4,N)}$. 

Choose ideal geodesic triangles $T(a,b,c)$ and $T(a,c,d)$ in $X$ and $T(f_*(a), f_*(b), f_*(c))$ and $T(f_*(a), f_*(c), f_*(d))$ in $Y$. By Lemma \ref{Morse:end-points}, there exists a constant $D$, depending on $\lambda, \varepsilon, $ and $N'$ only, such that each side of the quasi-geodesic triangle $f(T(a,b,c))$ is contained in the \linebreak $D$--neighborhood of the corresponding side of $T(f_*(a), f_*(b), f_*(c))$. Similarly, for $f(T(a,c,d))$. 
Thus, if $x$ and $y$  are within $K_N$ of each side of $T(a,b,c)$ and $T(a,c,d)$, respectively, then $f(x)$ and $f(y)$ are within $\lambda K_N+\varepsilon +D$ of each side of $T(f_*(a),f_*(b), f_*(c))$ and $T(f_*(a), f_*(c), f_*(d))$ respectively. 

Now define $K=\max\{\lambda K_N +\varepsilon+D, K_{N'}\}$ and recall  that Lemma \ref{lemma:bounded_centers} bounds the diameter of the image of a tuple under $m_Y^{N',K}$ by a constant $M$ depending only on $N'$ and $K$, and thus only on $N, N', \lambda,$ and $\varepsilon$. Thus, 
\begin{align} \label{eq:distpq_bound} 
[f_*(a), f_*(b),f_*(c), f_*(d)]_{N'} &\leq \diam( m_Y^{N',K}(f_*(a), f_*(b),f_*(c) ) \cup m_Y^{N',K}(f_*(a), f_*(c),f_*(d) )) \\ &\leq  d(f(x),f(y))+2M\nonumber  \leq   \lambda[a,b,c,d]_N +\varepsilon+ 2M. \nonumber 
\end{align}
\end{proof} 

\begin{remark}
The above proof also shows that being a Morse center space is a quasi-isometry invariant among proper geodesic metric spaces.
\end{remark}

\subsection{Quasi-m\"obius, 2--stable homeomorphisms induce quasi-isometries}

Throughout this section, $X$ and $Y$ will denote proper geodesic Morse center spaces with basepoints $p \in X$ and $r \in Y$. We assume there exists a $2$--stable homeomorphism $h: \partial_MX_p\rightarrow \partial_M Y_r$. We will show that, under some additional assumptions, $h$ is induced by a quasi-isometry, which we now construct.

Under our assumptions, there must exist a Morse gauge $N$  and a constant $K_0$ such that for all $K\geq K_0$
\[X = \bigcup \limits_{V \in \partial X^{(3,N)}} m_X^{N,K}(V) \hspace{5pt} \text{ and } \hspace{5pt} Y = \bigcup \limits_{V \in \partial Y^{(3,N)}} m_Y^{N,K}(V).\]
Furthermore, because $h$ is 2--stable,  $h(\partial X^{(3,N)}) \subseteq \partial Y^{(3,N_h)}$ for some Morse gauge $N_h \geq N$. 

For $K\geq \max\{K_0,K_{N_h}\}$, we define a map $f_K\colon X \rightarrow Y$ by sending $p$ to $r$,  and for each point $x \in X -\{p\}$, we choose an ideal triangle $T_x$ so that $x$ is a $K$--center of $T_x$ and the vertex tuple $(a,b,c)$ of $T_x$ is in $\partial X^{(3,N)}$.  We then define $f_K(x)$ to be a choice of $K$--center of $(h(a),h(b),h(c))$. We will now show that, under additional assumptions which we describe, $f_K$ is a quasi-isometry and that the induced map $f_K{_*}:\partial_M X_p \rightarrow \partial_M Y_r$ agrees with $h$.

\begin{theorem}\label{thrm:main_body_part1} 
Let $X$ and $Y$ be proper Morse center spaces with the small cross-ratio property. If $h:\partial X \rightarrow \partial Y $ is a $2$--stable, quasi-m\"obius homeomorphism  with 2--stable and quasi-m\"obius inverse, then $f_K:X \rightarrow Y$ is a  quasi-isometry for $K$ sufficiently large. 
\end{theorem}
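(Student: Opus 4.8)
\textbf{Proof plan for Theorem \ref{thrm:main_body_part1}.}

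The plan is to follow the strategy of \cite{CM} closely: show that $f_K$ is a quasi-isometric embedding by bounding $d_Y(f_K(x), f_K(x'))$ above and below by an affine function of $d_X(x,x')$, and then show $f_K$ is coarsely surjective, which follows from the fact that $h^{-1}$ gives a map $g$ defined symmetrically and one checks $g \circ f_K$ and $f_K \circ g$ are bounded distance from the identity. Throughout, the key bookkeeping device is Lemma \ref{well:def}: if two points $x, x'$ lie within $2K$ of each other in $X$, then choosing triangles $T_x$ and $T_{x'}$ realizing them as $K$--centers, the union of $K$--centers of the vertex tuples has diameter at most $L = 2K + (\text{diameter bounds from Lemma \ref{lemma:bounded_centers}})$, and hence by Lemma \ref{well:def} the union of $K$--centers of the $h$--images of these tuples has diameter bounded by a constant $B$ depending only on $N, N_h, K$. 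Since $f_K(x)$ and $f_K(x')$ are chosen among those image centers, $d_Y(f_K(x), f_K(x')) \leq B$ whenever $d_X(x,x') \leq 2K$.

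For the general upper bound, I would chain this local estimate along a geodesic: given $x, x' \in X$, pick points $x = z_0, z_1, \dots, z_n = x'$ along a geodesic from $x$ to $x'$ with consecutive points at distance at most $2K$ and $n \leq \lceil d_X(x,x')/2K \rceil + 1$; applying the local estimate to each consecutive pair and the triangle inequality gives $d_Y(f_K(x), f_K(x')) \leq B(d_X(x,x')/2K + 2)$, an affine upper bound. For the lower bound, apply the same argument to the map $g: Y \to X$ built from $h^{-1}$ (which is legitimate since $h^{-1}$ is also $2$--stable and quasi-m\"obius by hypothesis, and $Y$ is a Morse center space with the small cross-ratio property): this yields $d_X(g(y), g(y')) \leq B'(d_Y(y,y')/2K' + 2)$. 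Then one must check that $g \circ f_K$ is bounded distance from $\mathrm{id}_X$: given $x$, both $g(f_K(x))$ and $x$ are $K$--centers of triangles whose vertex tuples are related by $h^{-1} \circ h = \mathrm{id}$ up to the choices involved, so they are $K$--centers of tuples whose union of centers has controlled diameter, and Lemma \ref{lemma:bounded_centers} bounds $d_X(g(f_K(x)), x)$; similarly $f_K \circ g$ is close to $\mathrm{id}_Y$. Combining, $d_X(x,x') - C \leq 2K \cdot d_Y(f_K(x), f_K(x'))/B'$ roughly, giving the affine lower bound, and the boundedness of $g \circ f_K$ near the identity gives coarse surjectivity of $f_K$.

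The main obstacle I expect is the bookkeeping around the \emph{choices} inherent in the definition of $f_K$ and $g$: $T_x$ is only \emph{some} triangle realizing $x$ as a $K$--center, $f_K(x)$ is only \emph{some} center of the image tuple, and likewise for $g$. The subtle point is that in checking $g \circ f_K \simeq \mathrm{id}$, the triangle chosen for $f_K(x)$ when defining $g$ need not be $h(T_x)$; one resolves this by noting that $f_K(x)$ lies within $K$ of all sides of $h(T_x)$ (suitably interpreted, using Lemma \ref{Morse:end-points} to control the Morse gauges of $h$--image geodesics and Lemma \ref{lemma:bounded_centers} to see all $K$--center sets of a fixed tuple have bounded diameter), so $f_K(x)$ is within a controlled constant of $m_Y^{N_h, K'}$ of the image tuple for a possibly larger $K'$, and then $g(f_K(x))$ and $x$ are both controlled-distance from $m_X^{N, K''}$ of the tuple $(a,b,c)$, whence Lemma \ref{lemma:bounded_centers} applies. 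Making all the Morse gauges and the constants $K, K', K''$ match up — ensuring each is large enough relative to the others (at least $K_N$, $K_{N_h}$, $K_0$, and the various gauges produced by Lemmas \ref{well:def} and \ref{3-6:Morse}) — is the technical heart, but it is exactly the argument of \cite{CM} Proposition 4.2 and its corollaries, which (as remarked before Lemma \ref{3-6:Morse}) does not use the CAT(0) hypothesis once Lemma \ref{3-6:Morse} and Lemma \ref{well:def} are in hand.
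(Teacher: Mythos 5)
Your proposal follows essentially the same route as the paper's proof: the paper also defines the coarse maps $g=m^{N_h,K}_Y\circ h\circ (m_X^{N,K})^{-1}$ and $\hat g=m^{N',K}_X\circ h^{-1}\circ (m_Y^{N_h,K})^{-1}$, proves the Lipschitz-type upper bounds by chaining unit steps along a geodesic via Lemma \ref{well:def}, and handles the choice/gauge issues you identify by working with set-valued maps (plus an auxiliary map $\tilde g=m^{N'',K}_Y\circ h\circ (m^{N',K}_X)^{-1}$ to compare gauges) so that $\hat g\circ g$ and $g\circ\hat g$ are bounded distance from the identity. Your pointwise bookkeeping with Lemmas \ref{lemma:bounded_centers} and \ref{well:def} is just a reformulation of this, so the argument is correct and matches the paper's.
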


\begin{proof} The following argument is very similar to the proof in the CAT(0) setting presented in Theorem 4.4 of \cite{CM}.

Because both $h$ and $h^{-1}$ are 2--stable, there exist Morse gauges $N'$ and $N''$ (both greater than $N$) such that
\[ h^{-1}(\partial Y^{(3,N_h)} ) \subseteq \partial X^{(3,N')}  \hspace{5pt}  \text{ and }  \hspace{3pt}  h(\partial X^{(3,N')} ) \subseteq \partial Y^{(3,N'')}. \]
Choose $K \geq \max\{K_0,K_N,K_{N_h},K_{N'},K_{N''}\}$ and  define maps $g \colon X \rightarrow 2^Y$ and $\hat{g}: Y \rightarrow 2^X$ by:
\[g(x)=m^{N_h,K}_Y \circ h \circ (m_X^{N,K})^{-1}(x) \hspace{10pt} \text{and} \hspace{10pt} \hat{g}(y) = m^{N',K}_X \circ h^{-1} \circ (m_Y^{N_h,K})^{-1}(y). \]

\noindent Lemmas \ref{lemma:internal_points_exist}, \ref{lemma:bounded_centers},  and \ref{well:def} imply that $g$ and $\hat{g}$ are well defined coarse maps. We further assume $K$ is large enough so that $q \in g(p)$. Observe that $f_K$ is a choice of induced function for $g$. Thus showing that $f_K$ is a quasi-isometry is equivalent to showing the following:
\begin{enumerate}[(1)]
    \item There exist constants $A\geq1$ and $B\geq 0$ such that $d_Y(g(x),g(x')) \leq Ad_X(x,x')+B$ and $d_X(\hat{g}(y),\hat{g}(y')) \leq Ad_Y(y,y')+B$ for all $x,x'\in X$ and $y,y' \in Y$.
    \item  $\hat{g}\circ g$ and $g\circ \hat{g}$ are bounded distance away from the identity.
\end{enumerate}

\noindent \textbf{Proof of (1):} Let $x,x' \in X$. Select a sequence of points $x = x_0,x_1,\ldots, x_n=x'$  along a geodesic connecting $x$ and $x'$ so that $d_X(x_i,x_{i+1}) = 1$ for $0\leq i \leq n-2$ and $d_X(x_{n-1}, x_n) \leq 1$. Let $M$ be the constant from Lemma \ref{lemma:bounded_centers} bounding the images of $m_X^{N,K}$, and  let $B$ be the constant from Lemma \ref{well:def} corresponding to $L=2M+1$. Then for $0 \leq i \leq n-1$ we have $d_Y(g(x_i),g(x_{i+1})) \leq B$, and thus
\[d_Y(g(x),g(x') )\leq Bn \leq B(d_X(x,x') +1).\]

\noindent Because Lemma \ref{well:def} also applies to $h^{-1}$, the proof of  (1) is completed by an analogous argument for $\hat{g}$.\\

\noindent \textbf{Proof of (2):} Let $x \in X$ and $(a,b,c) \in (m_X^{N,K})^{-1} (x)$. Then $ m_Y^{N_h,K}(h(a), h(b), h(c)) \subseteq g(x)$, implying that  
\begin{equation} 
\label{eq:bounded_from_id} 
x \in m_X^{N',K} (a,b,c) \subseteq \hat{g}(g(x)).
\end{equation}
Thus because $\hat{g}$ and $g$ are coarse maps, (\ref{eq:bounded_from_id}) implies that $\hat{g}\circ g$  is bounded distance from the identity. 
For the $ g \circ \hat{g}$ case, define \[\tilde{g} = m^{N'',K}_Y \circ h \circ (m^{N',K}_X)^{-1}.\] By an  argument analogous to the $\hat{g} \circ g$ case above, we have that $y \in \tilde{g} (\hat{g}(y))$ for all $y \in Y$. Thus we only need to uniformly bound the distance between $g(x)$ and $\tilde{g}(x)$ for all $x \in X$.  To do so, observe that  $\partial X^{(3,N)} \subseteq \partial X^{(3,N')} $ and $\partial Y^{(3,N_h)} \subseteq \partial Y^{(3,N'')}$. Thus,  $g(x) \subseteq \tilde{g}(x)$. This completes the proof of (2) and thus establishes that $f_K$ is a quasi-isometry. 

\end{proof}

\begin{theorem} \label{thrm:main_body_part2} 
Let $X$ and $Y$ be proper Morse center spaces with the small cross-ratio property. 
Suppose $h:\partial X \rightarrow \partial Y $  is $2$--stable and quasi-m\"obius with 2--stable and quasi-m\"obius inverse. Then for all $K$ sufficiently large, $h$ is induced by  $f_K:X \rightarrow Y$. 
\end{theorem}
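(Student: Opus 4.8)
The goal is to show that the quasi-isometry $f_K$ produced in Theorem~\ref{thrm:main_body_part1} induces precisely the homeomorphism $h$ on Morse boundaries, i.e.\ $(f_K)_* = h$. The plan is to fix a boundary point $a \in \partial X$ and an $N$--Morse geodesic ray $\gamma$ from $p$ representing $a$, and to show that $f_K(\gamma)$ stays bounded distance from a geodesic ray representing $h(a)$; since by Proposition~\ref{prop:cordes_inducedmap} the point $(f_K)_*(a)$ is exactly the class of the Morse ray tracking $f_K(\gamma)$, this will force $(f_K)_*(a) = h(a)$.

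\textbf{Key steps.} First, I would pick two further boundary points $b, c$ so that $(a,b,c) \in \partial X^{(3,N)}$ (possible since $|\partial X| \ge 3$ and, after enlarging $N$, using Proposition~\ref{prop:cordes_visual} to realize all three pairs by Morse geodesics with controlled gauge), and fix an ideal triangle $T(a,b,c)$ with $N$--Morse sides. Second, I would produce a sequence of points $x_n \to a$ marching out along $\gamma$ (or along the side $T(a,b)$, which by Corollary~\ref{inf:to:finite} is uniformly Hausdorff-close to $\gamma$); the point is that for $n$ large, $x_n$ is a $K$--center of a triangle $T_{x_n}$ one of whose vertices can be taken to be $a$ itself, with the other vertices varying but with all pairwise geodesics $N$--Morse. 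This uses the Morse-center-space hypothesis together with the small cross-ratio property: starting from $T(a,b,c)$, a $K$--center near $x_n$ exists, and a sequence of small flips lets me replace the vertex tuple by one containing $a$ while keeping the $K$--centers within a bounded distance of $x_n$ — essentially the same flip argument as in Lemma~\ref{well:def}. Third, applying $h$ and the definition of $f_K$ (together with Lemmas~\ref{lemma:bounded_centers} and \ref{well:def}), $f_K(x_n)$ lies uniformly close to a $K$--center of a triangle in $Y$ with vertex set containing $h(a)$; since those triangles all have one side a geodesic to $h(a)$ that is $N_h$--Morse (or some uniform gauge), and since a $K$--center lies within $K$ of that side, the points $f_K(x_n)$ lie in a uniform neighborhood of an $N_h$--Morse geodesic ray to $h(a)$. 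Fourth, $f_K(\gamma)$ is a quasi-geodesic ray (as $f_K$ is a quasi-isometry) whose image is tracked by the discrete set $\{f_K(x_n)\}$ going out to $h(a)$; by Corollary~\ref{inf:to:finite} and Lemma~\ref{Morse:end-points}, a quasi-geodesic ray that repeatedly comes uniformly close to an infinite Morse geodesic ray must be asymptotic to it. Hence $\overline{f_K(\gamma)}$ represents $h(a)$, giving $(f_K)_*(a) = h(a)$, and since $a$ was arbitrary, $(f_K)_* = h$.

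\textbf{Main obstacle.} The delicate point is Step~two/three: ensuring that as $x_n$ runs out to infinity along $\gamma$, one can \emph{consistently} arrange the triangles $T_{x_n}$ (or suitable bounded-distance replacements) to share the vertex $a$, with all the auxiliary vertices and the resulting Morse gauges controlled \emph{independently of $n$}. The small cross-ratio property gives each individual flip, but one must check that a bounded number of flips suffices uniformly and that the gauge does not degrade along the sequence — this is where the uniformity in Lemma~\ref{well:def} (constant $B$ depending only on $N, N_h, K, L$) is essential. Once that uniformity is in hand, the convergence argument in Step~four is routine, relying only on the already-established Morse-geodesic lemmas of Section~3. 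I would also need to separately dispatch the basepoint: $f_K(p) = r$ is forced by construction, and one checks $r$ is uniformly close to the relevant centers so the ray argument still applies at $n = 0$.
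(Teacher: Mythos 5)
Your overall setup (take $x_n\to\infty$ along a ray $\gamma$ representing $a$, use the triangles defining $f_K(x_n)$, and control everything with Lemmas \ref{lemma:bounded_centers} and \ref{well:def}) matches the paper's, but the crucial Step three/four has a genuine gap. After your flips you know that $f_K(x_n)$ is uniformly close to a $K$--center of a triple of the form $(h(a),h(u_n),h(v_n))$, where $u_n,v_n$ vary with $n$. From this you infer that the points $f_K(x_n)$ lie in a uniform neighborhood of a \emph{single} Morse geodesic ray representing $h(a)$. That inference is false: each center is only within $K$ of bi-infinite geodesics one of whose endpoints is $h(a)$, and nothing forces the centers to sit on the ends of those geodesics pointing toward $h(a)$. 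Already in a tree, if $u_n,v_n$ both converge to a direction $B\neq h(a)$, the median of the ideal triangle $(h(a),u_n,v_n)$ is the branch point of $u_n$ and $v_n$, which escapes to infinity toward $B$, not toward $h(a)$. So ``center of a triangle with vertex $h(a)$ going to infinity'' does not imply ``going to infinity toward $h(a)$,'' and your Step four then has no fixed ray $\rho$ to which Corollary \ref{inf:to:finite} and Lemma \ref{Morse:end-points} can be applied. To close this you must control the \emph{other} vertices: this is exactly what the paper's Claims 1 and 2 do, showing (by contradiction, using Lemma \ref{lemma:bounded_centers} and the basepoint: otherwise both $p$ and $x_n$ would be uniform centers of the same triple while $d(p,x_n)\to\infty$) that at least two of the vertex sequences converge to $q$ in $\partial X$, and similarly in $Y$ to $f_*(q)$; continuity of $h$ then identifies the limits, with the non-Hausdorffness of $\partial Y$ handled via $2$--stability, Lemma \ref{lemma:convergence_in_strata}, and Proposition \ref{prop:fundneigh}. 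Your outline never confronts this convergence-of-vertices issue, and it is the heart of the proof.

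A secondary, fillable point: your flip in Step two needs the $4$--tuple $(a_n,b_n,c_n,a)$ to lie in some $\partial X^{(4,N')}$ with $N'$ independent of $n$ before the small cross-ratio property applies. The Morse-center hypothesis only gives uniform gauges among $a_n,b_n,c_n$; uniformity of the gauges of geodesics from $a$ to these points is not automatic and requires the argument in the first half of the paper's Claim 1 (concatenating at the center $x_n$ and invoking Lemmas \ref{Morse:end-points} and \ref{2-side:Morse} together with the gauge of $\gamma$), not merely Proposition \ref{prop:cordes_visual}.
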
 

\begin{proof} 
Let $K$ be large enough so that Theorem \ref{thrm:main_body_part1} implies $f_K:X\rightarrow Y$ is a quasi-isometry, and let $f=f_K$. Choose a point $q \in \partial X$ and a geodesic ray $\gamma$ based at $p$ representing $q$. To prove $h$ and the boundary map $f_*$ induced by $f$ are the same map, we must show $f_*(q)=h(q)$. 

Choose a sequence of points $(x_n)$ on $\gamma$ satisfying $\lim \limits_{n \rightarrow \infty} d(p,x_n) = \infty$. Let $a_n, b_n,$ and  $c_n$ denote the vertices of the triangle $T_{x_n}$ with $K$--center $x_n$ used to define $f(x_n)$. To proceed, we require the following claims.

\vspace{5pt} 
\noindent \textbf{Claim 1:} 
\textit{There exists a Morse gauge $N'$ such that $a_n,b_n,c_n \in \partial^{N'} X$ for all $n$. Furthermore, at least two of $(a_n), (b_n), $ and $(c_n)$ converge to $q$ in $\partial X$. }

\vspace{5pt} 

\noindent \textbf{Claim 2:}  
\textit{At least two of $(h(a_n)),( h(b_n)),$ and $(h(c_n))$ converge to $f_*(q)$ in $\partial Y$.}

\vspace{5pt}

For now, assume Claims 1 and 2 hold.  Then without loss of generality, we may assume  $(a_n)$ converges to $q$ in $\partial X$  and $(h(a_n))$ converges to $f_*(q)$ in $\partial Y$.  The continuity of $h$ implies that $(h(a_n))$ converges to $h(q)$ in $\partial Y$. We would like to conclude that $h(q)=f_*(q)$, but it is unknown if $\partial  Y$ is Hausdorff. To finish we therefore need the following argument. 

There exists a Morse gauge $N''$ such that
\begin{equation} \label{eq:strata_to_strata}
h(\partial^{N'} X ) \subseteq \partial^{N''} Y. 
\end{equation} 
Together with Claim 1, this implies that $h(a_n)$ is in $\partial^{N''}Y$ for all $n$. We can now apply Lemma \ref{lemma:convergence_in_strata} to conclude that $h(a_n)$ converges to both $h(q)$ and $f_*(q)$  in $(\partial^{N''} Y, \T_v^{N''})$.  Because $(\partial^{N''} Y, \T_v^{N''})$ is Hausdorff (Proposition \ref{prop:fundneigh}), $f_*(q)=h(q)$, as desired. It remains to prove Claims 1 and 2. 

\noindent \textbf{Proof of Claim 1:}
Choose a geodesic triangle $T(p, a_n, x_n)$ so that the edge from $p$ to $x_n$ is a subpath of $\gamma$. 
 Because $x_n$ is a $K$--center of $T_{x_n}$, concatenating $T(x_n, a_n)$ with a geodesic from $x_n$ to a nearest point  on $T_{x_n}(a_n,c_n)$ yields a $(1,2K)$--quasi-geodesic. Thus by  Lemma \ref{Morse:end-points}, for some $D$ depending only on $N$ and $K$, we have 
\begin{equation} \label{eq:neigh_containment} 
T(x_n, a_n) \subseteq \mathcal{N}_D(T_{x_n}(a_n, c_n)),
\end{equation}
and 
 $T(x_n, a_n)$ is Morse for some gauge determined by $N$ and $K$. 
It then follows from Lemma \ref{2-side:Morse} that each side of $T(p, a_n,x_n)$ is $N'$--Morse for some gauge $N'$ depending only on $N$, $K$, and the gauge of $\gamma$. So, $q, a_n \in \partial^{N'} X$, and by similar arguments $b_n, c_n \in \partial^{N'} X$. Thus the first part of Claim 1 is proved. 

Suppose the second half of Claim 1 is false. Without loss of generality, we may assume that $(a_n)$ and $(b_n)$ do not converge to $q$ in $\partial X$. 

Let $\{i_{px_n}, i_{pa_n}, i_{x_na_n}\}$ be a  $K_{N'}$--internal triple for $T(p,x_n,a_n)$, where $K_{N'}$ is the constant determined by $N'$ from Lemma \ref{lemma:internal_points_exist}. To simplify notation, we let $\alpha_n$ denote $T(p, a_n)$. 
The concatenation of the segment of $\alpha_n$ from $p$ to $i_{pa_n}$ with a geodesic segment from $i_{pa_n}$ to $i_{px_n}$ is a $(1, 2K_{N'})$--quasi-geodesic. 
So because $\gamma$ is $N'$--Morse, we have that 
\[d(\alpha_n(t), \gamma) \leq N'(1,2K_{N'}) \text{ for all } 0\leq t \leq d(p, i_{pa_n}),\]
which implies that 
\begin{equation} \label{eq:fellow_travel} d(\alpha_n(t), \gamma(t)) \leq 2N'(1,2K_{N'}) \text{ for all } 0 \leq t \leq d(p, i_{pa_n}).
\end{equation} 

 Because $(a_n)$ does not converge to $q$ in $\partial X$, it does not converge to $q$ in $(\partial^{\ol{N}} X, \T_v^{\ol{N}})$ for any Morse gauge $\ol{N}$. Thus, Inequality (\ref{eq:fellow_travel}) and Proposition \ref{prop:fundneigh} imply that there exists a constant $L$ such that after passing to a subsequence we have that $d(p,i_{pa_n}) \leq L$ for all $n$. Combining this with (\ref{eq:neigh_containment}), we find that for all $n$ 
\[d(p,T_{x_n}(a_n,c_n)) \leq d(p, i_{pa_n})+d(i_{pa_n}, i_{x_na_n})+d(i_{x_na_n}, T_{x_n}(a_n,c_n))  \leq L+K_{N'}+D.\]
We can repeat the above argument replacing $c_n$ with $b_n$ to conclude that, after passing to a subsequence, $d(p,T_{x_n}(a_n,b_n)) \leq L+K_{N'}+D$ for all $n$. Furthermore, because $(b_n)$  does not converge to $q$ in $\partial X$, a similar argument shows that after passing to a subsequence, $d(p,T_{x_n}(b_n,c_n))$ is also bounded above by $L+K_{N'}+D$ for all $n$. 

So after passing to a subsequence, both $p$ and $x_n$ are $(L+K_{N'}+D+K) $--centers  of $(a_n,b_n,c_n)$  for every $n$. But this cannot be the case because $\lim \limits_{n \rightarrow \infty} d(p,x_n)=\infty$, contradicting Lemma \ref{lemma:bounded_centers}, which says center sets have bounded diameter. Therefore, at least two of $(a_n), (b_n), $ and $(c_n)$ converge to $q$ in $\partial X$, completing the proof of Claim 1. 

\noindent \textbf{Proof of Claim 2:} 
The points $h(a_n), h(b_n), $ and $h(c_n)$ are in $\partial ^{N''}Y$ for all $n$ by  (\ref{eq:strata_to_strata}). 
By definition of $f_*$, the quasi-geodesic $f(\gamma)$ stays bounded distance from a Morse geodesic ray $\ol{\gamma}$ based at $f(p)$ representing $f_*(q)$. Thus, for each $n$, there exists a point $\ol{f(x_n)}$ on $\ol{\gamma}$ that is a uniformly bounded distance from $f(x_n)$. Because $f$ is a quasi-isometry and because $\lim \limits_{n\rightarrow \infty} d(p, x_n)=\infty$, we have $\lim \limits_{n \rightarrow \infty}d(f(p), \ol{f(x_n)})=\infty$. Moreover, by the definition of $f$, each $f(x_n)$ is  a $K$--center of $(h(a_n), h(b_n),h(c_n))$, which implies that there exists a constant $K'$ (independent of $n$) such  that  $\ol{f(x_n)}$ is a $K'$--center of $(h(a_n), h(b_n), h(c_n))$. 

 We can now apply an argument like that used to prove Claim 1 to see that at least two of $(h(a_n)), (h(b_n)),$ and $(h(c_n))$ converge to $f_*(q)$ in $\partial Y$. 
\end{proof}

\section{Applying main results to HHSs} \label{section:HHSs}
The goal of this section is to show that Theorems \ref{thrm:main_body_part1} and \ref{thrm:main_body_part2} apply to maximal almost HHSs.  In Section \ref{sec:paulin_cross_ratio}, we generalize results about Paulin's cross-ratio to non-proper hyperbolic spaces. Then in Section \ref{sec:HHS_smallcross}, we use these results to show that maximal almost HHSs have the small cross-ratio property, thus achieving the goal. 

\subsection{Paulin's cross-ratio for hyperbolic spaces} \label{sec:paulin_cross_ratio}

Let $X$ be a $\delta$--hyperbolic metric space. Paulin defined the cross-ratio of a tuple of four distinct points $(a,b,c,d)$ in $\partial X$ to be 
\[[a,b,c,d]=\sup   \frac{1}{2} \liminf \limits_{n \rightarrow \infty} (d(a_n,d_n)+d(b_n,c_n)-d(a_n,b_n)-d(c_n,d_n) ), \]
where the supremum is taken over all sequences $(a_n)$, $(b_n)$, $(c_n)$, $(d_n)$ in $X$ representing $a, b,c,$ and $d$, respectively. 
At first glance, our definition of cross-ratio may seem drastically different from Paulin's original definition.
However, the following proposition of Paulin demonstrates that the definitions are in coarse agreement when $X$ is a proper hyperbolic space. 

In what follows, to simplify notation,  $m_X^K(a,b,c)$ will denote $m_{X}^{N,K}(a,b,c)$, where $N$ is the uniform Morse gauge for geodesics in $X$ that depends only on $\delta$.

\begin{proposition} [\cite{paulin} Lemma 4.2]\label{prop:paulin} Let $X$ be a proper $\delta$--hyperbolic space.  There exists $K_0>0$ depending only on $\delta$ such that for any $K\geq K_0$, there exists a constant $Q>0$ such that for any distinct points $a,b,c,d\in \partial X$
\[|[a,b,c,d]| \stackrel{1,Q}{\asymp} \diam(m_{X}^{K}(a,b,c) \cup m_{X}^{K}(a,d,c)).\]
 \end{proposition}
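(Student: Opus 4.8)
The plan is to establish the two inequalities $|[a,b,c,d]| \leq \diam(m_X^K(a,b,c) \cup m_X^K(a,d,c)) + Q$ and $\diam(m_X^K(a,b,c) \cup m_X^K(a,d,c)) \leq |[a,b,c,d]| + Q$ separately, using the $\delta$--hyperbolicity of $X$ to relate the Gromov-product description of the cross-ratio to the geometry of ideal triangle centers. The starting observation is that in a $\delta$--hyperbolic space, if $(a_n)$, $(b_n)$, $(c_n)$, $(d_n)$ represent $a,b,c,d$, then the quantity $\frac12(d(a_n,d_n)+d(b_n,c_n)-d(a_n,b_n)-d(c_n,d_n))$ is, up to an additive error depending only on $\delta$, equal to $(a_n,b_n)_w + (c_n,d_n)_w - (a_n,d_n)_w - (b_n,c_n)_w$ for a basepoint $w$; and one can further arrange the supremum to be realized (up to bounded error) by taking $w$ to be a center of the ideal triangle $T(a,b,c)$ or $T(a,d,c)$. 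So the first step is to recall/record the standard hyperbolic facts: (i) a $K$--center of $(a,b,c)$ for $K \geq K_0 = K_0(\delta)$ is a point where the three sides of an ideal triangle come $O(\delta)$--close, and any two such centers are $O(\delta)$--close (this is just Lemma \ref{lemma:internal_points_exist} and Lemma \ref{lemma:bounded_centers} specialized to the uniform gauge $N = N(\delta)$); (ii) the distance between a center of $T(a,b,c)$ and a center of $T(a,d,c)$ is, up to $O(\delta)$, the length of the ``overlap'' the two triangles share along the common side $ac$ — equivalently it measures how far apart the feet of the internal triples are on a geodesic from $a$ to $c$.

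The second step is the upper bound $|[a,b,c,d]| \lesssim \diam(\cdots)$. Fix a geodesic $\sigma$ from $a$ to $c$ (bi-infinite, uniformly Morse since $N = N(\delta)$), let $x$ be a center of $T(a,b,c)$ and $y$ a center of $T(a,d,c)$, so both $x$ and $y$ are within $O(\delta)$ of $\sigma$; say $x$ is near $\sigma(s)$ and $y$ near $\sigma(t)$. Given any representative sequences, I would estimate each Gromov product $(a_n,b_n)_w$ etc.\ using the fact that in a hyperbolic space $(u,v)_w$ is coarsely the distance from $w$ to a geodesic $[u,v]$; choosing $w = x$ (a center of $T(a,b,c)$), the products $(a_n,b_n)_x$, $(a_n,c_n)_x$, $(b_n,c_n)_x$ are all $O(\delta)$ (that is what a center means), while $(a_n,d_n)_x$, $(c_n,d_n)_x$ are governed by where the sides $[a,d]$ and $[c,d]$ of $T(a,d,c)$ pass relative to $x$ — which is controlled by $d(x,y) \leq \diam(\cdots)$. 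Bookkeeping these estimates gives $\frac12(d(a_n,d_n)+d(b_n,c_n)-d(a_n,b_n)-d(c_n,d_n)) \leq d(x,y) + O(\delta) \leq \diam(m_X^K(a,b,c)\cup m_X^K(a,d,c)) + O(\delta)$; taking $\sup$ over representatives and $\liminf$ yields the bound with $Q = O(\delta)$.

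The third step is the reverse bound. Here I would run the estimate in the other direction: choose $w$ to be a center of $T(a,b,c)$ and choose representative sequences $(a_n), (b_n), (c_n), (d_n)$ that travel along the sides of $T(a,b,c)$ and $T(a,d,c)$ toward the respective ideal vertices, so that the Gromov products $(a_n,b_n)_w$, $(b_n,c_n)_w$ are $O(\delta)$ and $(a_n,d_n)_w$, $(c_n,d_n)_w$ coarsely record the distance from $w$ along $\sigma$ to the point where the $T(a,d,c)$ sides diverge; this shows $[a,b,c,d] \geq d(x,y) - O(\delta)$ for that particular pair of centers $x,y$. Since all centers of a given triple are $O(\delta)$--close (Lemma \ref{lemma:bounded_centers}), $d(x,y)$ differs from $\diam(m_X^K(a,b,c) \cup m_X^K(a,d,c))$ by only $O(\delta) + O(K)$, and combining gives the matching inequality with $Q$ depending only on $\delta$ (and $K$, but $K$ is controlled once we fix $K \geq K_0$ — more carefully, $Q$ depends on $\delta$ and $K$, which is fine since the statement allows $Q = Q(K)$). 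The main obstacle I anticipate is the careful choice of representative sequences realizing the supremum in Paulin's definition together with the simultaneous bookkeeping of the six Gromov products based at a triangle center — one has to be attentive that the ``error'' terms really are bounded purely in terms of $\delta$ (and $K$) and do not secretly depend on the configuration $(a,b,c,d)$; the thin-triangles/tripod comparison and the standard estimate $(u,v)_w \asymp_\delta d(w,[u,v])$ are exactly the tools that make this uniform, but assembling them without sign errors is the delicate part. Since this is precisely Lemma 4.2 of \cite{paulin}, I would largely follow Paulin's original argument, recast in the center-of-triangle language already set up in Section 3.
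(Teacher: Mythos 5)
The paper does not actually prove this proposition: it is imported verbatim from Paulin (his Lemma 4.2), and the only argument the paper supplies in this direction is the non-proper extension in Lemma \ref{lem:paulin2}, whose proof explicitly defers to Paulin after establishing the claim comparing finite and ideal centers. So there is no in-paper proof for your sketch to diverge from; what you wrote is essentially a reconstruction of Paulin's argument recast in the paper's center language, and in outline it is sound. The key reductions are correct and uniform in the configuration: the identity $\tfrac12\bigl(d(a_n,d_n)+d(b_n,c_n)-d(a_n,b_n)-d(c_n,d_n)\bigr)=(a_n,b_n)_w+(c_n,d_n)_w-(a_n,d_n)_w-(b_n,c_n)_w$ holds \emph{exactly} for every basepoint $w$ (not merely up to $O(\delta)$); the products stabilize up to $O(\delta)$, so the sup/liminf can be evaluated on any fixed representatives; and with $w=x$ a $K$--center of $(a,b,c)$, the estimate $(u,v)_w\asymp_\delta d(w,[u,v])$ reduces everything to $|(c,d)_x-(a,d)_x|$, which is within $O(\delta+K)$ of $d(x,y)$ for $y$ a $K$--center of $(a,d,c)$; Lemmas \ref{lemma:internal_points_exist} and \ref{lemma:bounded_centers} then convert $d(x,y)$ into the diameter of the union of center sets, with $Q=Q(\delta,K)$, exactly as the statement permits. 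One concrete correction to your third step: the inequality should read $|[a,b,c,d]|\geq d(x,y)-O(\delta+K)$, not $[a,b,c,d]\geq d(x,y)-O(\delta)$. Which of $(a,d)_x$, $(c,d)_x$ is the small product—and hence the sign of the cross-ratio—depends on whether $x$ lies on the $a$--side or the $c$--side of $y$ along a geodesic from $a$ to $c$, so the cross-ratio may be close to $-d(x,y)$. Since the proposition concerns $|[a,b,c,d]|$ and the set $m_X^{K}(a,b,c)\cup m_X^{K}(a,d,c)$ is symmetric under swapping $b$ and $d$, this is harmless, but as written that displayed inequality fails in half the configurations. With that sign bookkeeping made explicit (and with errors tracked as $O(\delta+K)$ rather than $O(\delta)$, since $K$ may be large), your outline does yield a complete proof.
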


Using Proposition \ref{prop:paulin}, Paulin \cite{paulin}  proves that proper hyperbolic spaces have the small cross-ratio property.

\begin{corollary}[\hspace{-4.5pt} \cite{paulin} Lemma 4.2]\label{cor:paulin1}
Let $X$ be a proper $\delta$--hyperbolic space. There exists $C>0$ such that for any distinct points $a,b,c,d\in \partial X$, at least one of $|[a,b,c,d]|$, $|[a,c,b,d]|$, or $|[b,a,c,d]|$ is less than $C$. 

\end{corollary}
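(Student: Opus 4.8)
The plan is to reduce Corollary \ref{cor:paulin1} to an elementary algebraic identity about Paulin's cross-ratio and then invoke Proposition \ref{prop:paulin} to transfer the conclusion back to the diameter-of-centers definition. The starting point is the observation that for any four real-valued quantities the Paulin cross-ratio has a built-in combinatorial symmetry: working directly from the definition
\[
[a,b,c,d] = \sup \tfrac{1}{2}\liminf_{n\to\infty}\bigl(d(a_n,d_n)+d(b_n,c_n)-d(a_n,b_n)-d(c_n,d_n)\bigr),
\]
one sees that the three quantities $[a,b,c,d]$, $[a,c,b,d]$, and $[b,a,c,d]$ are, up to sign and up to the $\sup/\liminf$, formed from the same six pairwise distances $d(a_n,b_n),\dots,d(c_n,d_n)$ with different signings. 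I would first record the purely formal fact: for any six nonnegative reals assembled from the six pairs, the ``cross-ratio-type'' expressions corresponding to the three ways of pairing up four points sum (with the appropriate signs) to zero, or more precisely two of them are negatives of a third up to the $\delta$-scale slop. Hence at least one of the three signed expressions is $\le 0$, so at least one of $|[a,b,c,d]|$, $|[a,c,b,d]|$, $|[b,a,c,d]|$ is bounded by the error term one picks up in passing between $\sup\liminf$'s — and in a $\delta$-hyperbolic space all such errors are $O(\delta)$.

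Concretely, the key steps in order are: (1) Fix sequences realizing $a,b,c,d$ and expand the three cross-ratio expressions in terms of the six pairwise distances along these sequences; check the algebraic identity that forces one of the three to be $\le 0$. (2) Argue that replacing a fixed sequence by the optimizing sequence in the definition of $[a,b,c,d]$ changes each pairwise Gromov-product-type quantity by at most a constant depending only on $\delta$ — this is exactly the standard $\delta$-hyperbolic fact that different sequences converging to the same boundary point have Gromov products agreeing up to $2\delta$. (3) Combine (1) and (2) to get a constant $C_0 = C_0(\delta)$ with $\min\{[a,b,c,d],[a,c,b,d],[b,a,c,d]\} \le C_0$; since the minimum of the signed quantities is $\le C_0$ and (by the cyclic algebra) they cannot all be large in absolute value, conclude $\min\{|[a,b,c,d]|,|[a,c,b,d]|,|[b,a,c,d]|\} \le C_1(\delta)$. (4) Finally apply Proposition \ref{prop:paulin}: choosing $K \ge K_0$, there is $Q = Q(\delta,K)$ with $|[\cdot,\cdot,\cdot,\cdot]| \stackrel{1,Q}{\asymp} \diam(m_X^{K}(\cdot)\cup m_X^{K}(\cdot))$, so setting $C = C_1(\delta)+Q$ gives the statement as phrased in the corollary (and this is precisely the statement needed to verify the small cross-ratio property for proper hyperbolic spaces).

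The main obstacle I anticipate is Step (1)/(2): making the sign bookkeeping in the cross-ratio identity genuinely correct, because the three expressions are defined with a $\sup$ over sequences \emph{outside}, so the ``one of them is $\le 0$'' phenomenon has to be argued at the level of a single common choice of sequences and then the $\sup$'s have to be reconciled. The clean way to handle this is to rewrite everything in terms of Gromov products: $[a,b,c,d]$ is coarsely $(a,c)_o + (b,d)_o - (a,b)_o - (c,d)_o$ up to $O(\delta)$ for a basepoint $o$, and similarly for the permutations, and then the needed inequality is the standard ``$\delta$-inequality on four points'' that among the three pairwise sums $(a,b)_o+(c,d)_o$, $(a,c)_o+(b,d)_o$, $(a,d)_o+(b,c)_o$ the two largest differ by at most $2\delta$. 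That four-point inequality immediately yields that at least one of the three cross-ratios is $\le 2\delta$ and, by symmetry of the roles, that none can be very negative either, pinning all the $O(\delta)$ constants. Everything else is routine once Proposition \ref{prop:paulin} is in hand.
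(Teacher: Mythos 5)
The paper does not actually prove this statement---it is quoted from Paulin (\cite{paulin} Lemma 4.2, obtained there alongside Proposition \ref{prop:paulin} via approximate centers of ideal triangles)---so your blind attempt should be judged as a direct proof, and in its \emph{final} form it is correct and is essentially the standard argument: writing $S_1=(a,b)_o+(c,d)_o$, $S_2=(a,c)_o+(b,d)_o$, $S_3=(a,d)_o+(b,c)_o$, each of the three cross-ratios in the statement is, for a common choice of representative sequences and up to an $O(\delta)$ error coming from the boundary extension of Gromov products (different sequences representing the same boundary point change each product by at most $2\delta$, which also reconciles the outer supremum), one of the three differences $S_1-S_3$, $S_2-S_3$, $S_1-S_2$; the four-point condition says the two largest of $S_1,S_2,S_3$ differ by at most $2\delta$, so exactly one of these differences is bounded in absolute value by $2\delta$, giving $C=O(\delta)$. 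A pleasant bonus of this route, compared with the centers-of-ideal-triangles picture behind Proposition \ref{prop:paulin}, is that it never uses properness, so it proves part (2) of Lemma \ref{lem:paulin2} directly, without the approximation claim in that proof.

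Two things need repair before this is a proof. First, your step (1) is false as stated: the only exact algebraic relation among the three expressions (for a fixed choice of sequences) is the cocycle identity $[a,b,c,d]=[a,c,b,d]+[b,a,c,d]$, i.e.\ $(S_1-S_3)=(S_2-S_3)+(S_1-S_2)$, and this forces nothing---all three can be large and positive (think $10=4+6$). Hyperbolicity is indispensable, so delete the ``algebraic identity forces one to be $\leq 0$'' claim and let the four-point inequality carry the entire load, as your last paragraph in fact does; similarly, ``none can be very negative'' is not true of all three cross-ratios (if $S_3\gg S_1$ then $[a,b,c,d]$ is very negative), and what you actually need is the two-sided bound on the difference of the two largest sums, which is exactly what the four-point condition provides. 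Second, your Gromov-product bookkeeping is off: $[a,b,c,d]\approx S_1-S_3$, $[a,c,b,d]\approx S_2-S_3$, $[b,a,c,d]\approx S_1-S_2$, whereas the expression you wrote, $(a,c)_o+(b,d)_o-(a,b)_o-(c,d)_o=S_2-S_1$, is $-[b,a,c,d]$; the slip is harmless only because the three listed permutations do realize all three differences, but the case analysis over which $S_i$ is smallest requires the correct matching. Finally, your step (4) is unnecessary for Corollary \ref{cor:paulin1} as stated, which concerns Paulin's cross-ratio itself; Proposition \ref{prop:paulin} is only needed afterwards, to translate this into the paper's $[\,\cdot\,]_N$ and hence the small cross-ratio property.
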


In Section \ref{sec:HHS_smallcross}, we will prove that maximal almost hierarchically hyperbolic spaces have the small cross-ratio property.  To do so we would like to apply Corollary \ref{cor:paulin1} to the hyperbolic space $CS$ corresponding to the maximal element in the HHS strcture; however, $CS$ may not be proper. For the sake of Corollary \ref{cor:paulin1}, the major difference between proper and non-proper hyperbolic spaces is that in the latter case, the boundary is not visual (i.e., if $X$ is a non-proper $\delta$--hyperbolic space, then there does not have to exist a bi-infinite geodesic between every pair of boundary points). Fortunately, in the non-proper case, there does exist a $(1,20\delta)$--quasi-geodesic between every pair of boundary points \cite{KB}.  Thus we can modify the definition of  $m_{X}^{K}$ slightly and for distinct points $a,b,c \in \partial X$ define 
\[
\hat{m}_X^K(a,b,c)= \left\{ \begin{tabular}{m{1cm} m{.01cm} m{7cm} }
$x \in X$ &:&  $x$  is  within $K$ of all sides of some $(1,20\delta)$--quasi-geodesic  triangle with vertices $a,b,c$
\end{tabular} \right\}.
\]
Since all $(1,20\delta)$--quasi-geodesics in a hyperbolic space are uniformly Morse, nearly identical arguments to Lemmas \ref{lemma:internal_points_exist} and \ref{lemma:bounded_centers} yield the following result. 

\begin{lemma} \label{lem:bounded_modified_centers}  There exists $K_0$ depending only on $\delta$ such that for all $K \geq K_0$, the set $\hat{m}^K_X(a,b,c)$ is non-empty with diameter bounded in terms of $\delta$ and $K$ for all distinct points $a,b,c \in \partial X$. 
\end{lemma}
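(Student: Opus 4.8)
\textbf{Proof proposal for Lemma \ref{lem:bounded_modified_centers}.}

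The plan is to repeat the arguments of Lemmas \ref{lemma:internal_points_exist} and \ref{lemma:bounded_centers} verbatim, but with geodesics replaced by $(1,20\delta)$--quasi-geodesics, using the fact that in a $\delta$--hyperbolic space all $(1,20\delta)$--quasi-geodesics are $N$--Morse for a gauge $N=N(\delta)$ depending only on $\delta$ (and in particular all subsegments are uniformly Morse, cf.\ the Convention following Lemma \ref{lemma:morse_subgeodesics}). Since every pair of distinct boundary points of $X$ is joined by a $(1,20\delta)$--quasi-geodesic by \cite{KB}, the set $\hat m_X^K(a,b,c)$ is taken over a nonempty family of triangles, so it makes sense to speak of its elements.

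First I would establish nonemptiness, following the proof of Lemma \ref{lemma:internal_points_exist}. Fix distinct $a,b,c\in\partial X$ and a $(1,20\delta)$--quasi-geodesic triangle $T$ with these vertices, sides $\alpha_1,\alpha_2,\alpha_3$, each $N$--Morse. Using Corollary \ref{inf:to:finite} (together with Remark \ref{remark:Morse_qausi-geoedsic}, which allows the corollary to be applied with quasi-geodesics in place of geodesics, at the cost of the constant $D$ also depending on the quasi-geodesic constants $(1,20\delta)$) I can truncate to a finite triangle: pick $x_1,x_2$ on $\alpha_3$ and $x_0$ on $\alpha_1$ each within $D=D(\delta)$ of the appropriate opposite side, form geodesics $\gamma_1,\gamma_2$ and the subpath $\gamma_3\subseteq\alpha_3$. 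By Lemma \ref{cordes:Morse:geodesics}(1) (again invoking Remark \ref{remark:Morse_qausi-geoedsic} so it applies to quasi-geodesic sides) the $\gamma_i$ are Morse with gauge depending only on $\delta$, hence by Lemma \ref{cordes:Morse:geodesics}(2) the finite triangle $\gamma_1\cup\gamma_2\cup\gamma_3$ is $\delta'$--slim for some $\delta'=\delta'(\delta)$. Select an internal triple $\{p_1,p_2,p_3\}$ with $d(p_i,p_j)\le 4\delta'$ and push each $p_i$ back to a point $q_i$ on $\alpha_i$ within $N(1,4D)$ via the Morse property; the triangle inequality then bounds $d(q_i,q_j)$ by a constant $K_0=K_0(\delta)$, so $\{q_1,q_2,q_3\}$ witnesses $\hat m_X^K(a,b,c)\ne\varnothing$ for all $K\ge K_0$.

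For the diameter bound I would mimic Lemma \ref{lemma:bounded_centers}. Fix $K\ge K_0$, a triangle $T$ as above with an internal triple $\{i_{ab},i_{bc},i_{ac}\}$, and a point $x$ within $K$ of all three sides; let $p$ on $T(a,b)$ and $q$ on $T(a,c)$ be within $K$ of $x$. After permuting labels so that $p$ precedes $i_{ab}$ and $q$ follows $i_{ac}$, the concatenation of a geodesic from $i_{ab}$ to $i_{ac}$ with the subpath of $T(a,b)$ from $a$ to $i_{ab}$ is a $(1,2K_0+c(\delta))$--quasi-geodesic (the extra additive term coming from the fact that $T(a,b)$ is now a $(1,20\delta)$--quasi-geodesic rather than a geodesic), hence by Lemma \ref{var:cordes} (with Remark \ref{remark:Morse_qausi-geoedsic}) there is a point $p'$ on $T(a,c)$ before $i_{ac}$ with $d(p',p)\le D'=D'(\delta,K)$. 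Then $d(x,i_{ac})\le d(x,p)+d(p,p')+d(p',i_{ac})\le d(x,p)+d(p,p')+d(p',q)$, and bounding $d(p',q)\le d(p',p)+d(p,x)+d(x,q)$ gives $d(x,i_{ac})$ bounded in terms of $\delta$ and $K$; since this holds for every such $x$ (and there are only three choices of internal point), $\diam(\hat m_X^K(a,b,c))$ is bounded in terms of $\delta$ and $K$.

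The only genuine subtlety, and the step I would be most careful about, is that $T(a,c)$ is a $(1,20\delta)$--quasi-geodesic rather than an honest geodesic, so the phrase ``$i_{ac}$ is between $p'$ and $q$'' must be interpreted via the parametrization of the quasi-geodesic, and the estimate $d(p',q)\ge d(p',i_{ac})$ used implicitly in Lemma \ref{lemma:bounded_centers} picks up an additive error of order $\delta$ from the quasi-geodesic constants; tracking these additive errors is routine but must be done to see that the final bound still depends only on $\delta$ and $K$. Everything else is a transcription of the two cited lemmas with Remark \ref{remark:Morse_qausi-geoedsic} invoked wherever a geodesic hypothesis was used, which is precisely the ``nearly identical arguments'' the statement refers to.
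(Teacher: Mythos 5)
Your proposal is correct and is essentially the paper's own argument: the paper proves this lemma precisely by noting that $(1,20\delta)$--quasi-geodesics are uniformly Morse (gauge depending only on $\delta$) and then rerunning the proofs of Lemmas \ref{lemma:internal_points_exist} and \ref{lemma:bounded_centers}, which is exactly what you do, including the use of Remark \ref{remark:Morse_qausi-geoedsic} to absorb the quasi-geodesic constants. Your extra care about ``betweenness'' along a quasi-geodesic side and the resulting $O(\delta)$ additive errors is a faithful filling-in of the details the paper leaves implicit.
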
 

We now recover Proposition \ref{prop:paulin} and Corollary \ref{cor:paulin1} for non-proper hyperbolic spaces using an argument similar to the one given by Paulin.

\begin{lemma}\label{lem:paulin2}
Let $X$ be a (not necessarily proper) $\delta$--hyperbolic space and  $a,b,c,d \in \partial X$ be distinct points. Then
\begin{enumerate}[(1)]
    \item There exists $K_0>0$ depending only on $\delta$ such that for any $K\geq K_0$, there exists $Q>0$ such that $|[a,b,c,d]| \stackrel{1,Q}{\asymp} \diam(\hat{m}_{X}^{K}(a,b,c) \cup \hat{m}_{X}^{K}(a,d,c)).$

    \item There exists $C>0$ depending only on $\delta$, such that at least one of $|[a,b,c,d]|$, $|[a,c,b,d]|$, or $|[b,a,c,d]|$ is less than $C$. 
\end{enumerate}
\end{lemma}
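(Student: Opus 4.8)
The plan is to reduce the non-proper case to facts that hold in any $\delta$-hyperbolic space by replacing geodesics with $(1,20\delta)$-quasi-geodesics, which always exist between boundary points by \cite{KB} and are uniformly Morse. For part (1), I would follow Paulin's argument for Proposition \ref{prop:paulin} essentially verbatim, tracking only where properness is used and checking it is always in the guise of ``there is a geodesic between these two boundary points''---each such use can be swapped for a $(1,20\delta)$-quasi-geodesic at the cost of enlarging additive constants by amounts controlled by $\delta$. Concretely: given representative sequences $(a_n),(b_n),(c_n),(d_n)$ for $a,b,c,d$, one estimates the four-term expression $\tfrac12(d(a_n,d_n)+d(b_n,c_n)-d(a_n,b_n)-d(c_n,d_n))$ using the thin-triangles/Gromov-product geometry of the $(1,20\delta)$-quasi-geodesic triangles on $(a,b,c)$ and on $(a,d,c)$; the shared side $[a,c]$ (realized as a $(1,20\delta)$-quasi-geodesic) is the ``gluing'' locus, and one shows that the configuration of the two internal triples records, up to additive error in $\delta$, how far apart along that side the two centers sit. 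This gives $|[a,b,c,d]| \stackrel{1,Q}{\asymp} \diam(\hat m_X^K(a,b,c)\cup \hat m_X^K(a,d,c))$ once Lemma \ref{lem:bounded_modified_centers} guarantees these sets are non-empty and have $\delta,K$-bounded diameter for $K\ge K_0$.

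For part (2), I would deduce the small-cross-ratio-type trichotomy from part (1) by a purely combinatorial computation with Paulin's cross-ratio. The algebraic identity to exploit is that for any four points the three ``symmetrized'' cross-ratios $[a,b,c,d]$, $[a,c,b,d]$, $[b,a,c,d]$ are built from the six pairwise Gromov products $(x_n,y_n)_{\cdot}$, and for a $\delta$-hyperbolic space the six quantities $(a,b),(a,c),(a,d),(b,c),(b,d),(c,d)$ (Gromov products at a fixed basepoint, extended to the boundary) satisfy the ultrametric-type inequality that the minimum of any three of them taken over a suitable partition is attained at least twice up to $O(\delta)$. From this one reads off that at least one of the three cross-ratios is $O(\delta)$; quantitatively, $[a,b,c,d]$ is coarsely $\max\{(a,c)+(b,d) - (a,b)-(c,d), \,\text{0}\}$-ish, and one checks directly that the smallest of the three candidate expressions is bounded by a constant $C=C(\delta)$. (Alternatively, and perhaps cleaner to write: apply part (1) to translate each of the three cross-ratios into a diameter of a union of center sets, and observe that among the three ways of pairing $\{a,b,c,d\}$ into two triangles sharing an edge, at least one pairing places the two centers uniformly close---this is exactly the statement proved for proper spaces in Corollary \ref{cor:paulin1}, whose proof only uses thin triangles and so goes through with $(1,20\delta)$-quasi-geodesic triangles.)

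The main obstacle I anticipate is bookkeeping rather than conceptual: making sure that every invocation of a geodesic in Paulin's original proof is genuinely replaceable by a $(1,20\delta)$-quasi-geodesic without a hidden appeal to local compactness (for instance, no use of Arzel\`a--Ascoli to extract limiting geodesic rays), and that the additive errors introduced at each such replacement do not compound in a way that depends on the points. Since $(1,20\delta)$-quasi-geodesics in a $\delta$-hyperbolic space are Morse with a gauge depending only on $\delta$, and triangles made of them are uniformly slim, all the error terms are ultimately functions of $\delta$ (and of $K$), so the constants $Q$ and $C$ come out depending only on $\delta$ as claimed. I would state the quasi-geodesic stability input (``$(1,20\delta)$-quasi-geodesic triangles in a $\delta$-hyperbolic space are $\delta'$-slim with $\delta'=\delta'(\delta)$'') as a preliminary remark so that the rest of the argument is a transcription of Paulin's with geodesics replaced throughout.
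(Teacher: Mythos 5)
Your proposal is correct and takes essentially the same route as the paper: replace bi-infinite geodesics by $(1,20\delta)$--quasi-geodesics between boundary points and rerun Paulin's proofs of Proposition \ref{prop:paulin} and Corollary \ref{cor:paulin1}, with every error controlled by $\delta$ via the uniform Morseness and slimness of such quasi-geodesic triangles together with Lemma \ref{lem:bounded_modified_centers}. The paper isolates the one genuinely new estimate as a single claim---that for $n$ large the centers $m_X^K(a_n,b_n,c_n)$ of finite triangles on the representative sequences lie uniformly close to $\hat{m}_X^K(a,b,c)$, proved by projecting $a_n,b_n$ to the quasi-geodesic side and applying Lemma \ref{(3,0)-lemma}---which is precisely the bridge between sequences and ideal quasi-geodesic triangles that your sketch of the four-point expression would need to make explicit.
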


\begin{proof}
In this proof only, we extend the map $m_X^K$ to include interior points by defining
\[
m_X^K(x,y,z)= \left\{ \begin{tabular}{m{1cm} m{.01cm} m{7cm} }
$v \in X$ &:&  $v$  is  within $K$ of all sides of some geodesic  triangle with vertices $x,y,z$
\end{tabular} \right\}
\]
for all distinct points $x,y,z \in X$.   
After replacing bi-infinite geodesics with $(1,20\delta)$--quasi-geodesics, Paulin's proofs of Proposition \ref{prop:paulin} and Corollary \ref{cor:paulin1} will work as written to prove (1) and (2) if we can prove the following claim:\\

\noindent \textbf{Claim:} Let $(a_n),(b_n),(c_n),$ and $(d_n)$ be sequences in $X$ representing $a,b,c,d \in \partial X$ respectively.  There exists $K_0>0$ depending only on $\delta$ such that for each $K\geq K_0$
\[\diam(m^K_X(a_n, b_n, c_n) \cup \hat{m}^K_X(a,b,c))\] is bounded by a constant depending only on $K$ and $\delta,$  for all but a finite number of $n$.\\

\noindent {\bf Proof of Claim:} Let  $T(a,b,c)$ be a $(1,20\delta)$--quasi-geodesic triangle with vertices $a,b,$ and $c$. There exists $\ol{K}$ depending only on $\delta$ such that $T$ has a $\ol{K}$--internal triple, which we denote by $\{i_{ab}, i_{ac}, i_{bc}\}$.

Let $a_n'$ and $b_n'$ denote the closest point projection of $a_n$ and $b_n$ respectively onto $T(a,b)$. Because $(a_n)$ and $(b_n)$ converge to distinct points in $\partial X$,   
by taking $n$ sufficiently large, we may assume that $i_{ab}$ is between $a_n'$ and $b_n'$ on $T(a,b)$ (see Figure \ref{fig:paulin_non_proper}). Let $\psi$ be the concatenation of a geodesic from $a_n$ to $a_n'$, the subsegment of $T(a,b)$ from $a_n'$ to $b_n'$, and a geodesic from $b_n'$ and $b_n$. By taking $n$ to be sufficiently large, we can guarantee that $b_n'$ is closer to $b_n$ than any point on the geodesic from $a_n$ to $a_n'$. We can then apply Lemma \ref{(3,0)-lemma} twice to conclude that $\psi$ is a $(7,20\delta )$--quasi-geodesic. Thus, for any choice of geodesic triangle $T(a_n,b_n,c_n)$ we have 
\begin{equation} \label{eq:internal_point_close1} d(i_{ab},T(a_n,b_n)) \leq N(7,20\delta).
\end{equation} 
Similarly, for all $n$ sufficiently large, we have
\begin{equation} \label{eq:internal_point_close2}  d(i_{ac}, T(a_n,c_n)) \leq N(7,20\delta)  \hspace{7pt} \text {and} \hspace{7pt}  d(i_{bc}, T(b_n,c_n)) \leq N(7,20\delta).
\end{equation} 

Choose $K \geq \ol{K}+2N(7,20\delta).$ Because $\diam \{i_{ab}, i_{bc}, i_{ac}\} \leq \ol{K}$, Inequalities (\ref{eq:internal_point_close1}) and (\ref{eq:internal_point_close2}) and the triangle inequality imply that $i_{ab} \in m_X^{K} (a_n,b_n,c_n)$. 
It is a classical fact about hyperbolic spaces that the diameter of $m_K^X(a_n,b_n,c_n)$ is bounded by a constant depending only on $K$ and $\delta$, and by Lemma \ref{lem:bounded_modified_centers},  the same is true of $\hat{m}^K_X(a,b,c)$. This together with the fact that $i_{ab} \in \hat{m}^K_X(a,b,c)$  implies that $\diam(m^K_X(a_n, b_n, c_n) \cup \hat{m}^K_X(a,b,c)) $ is bounded by a constant depending only on $K$ and $\delta$, completing the proof of the claim. 

\begin{figure}
\center 
\def\svgwidth{160pt}
 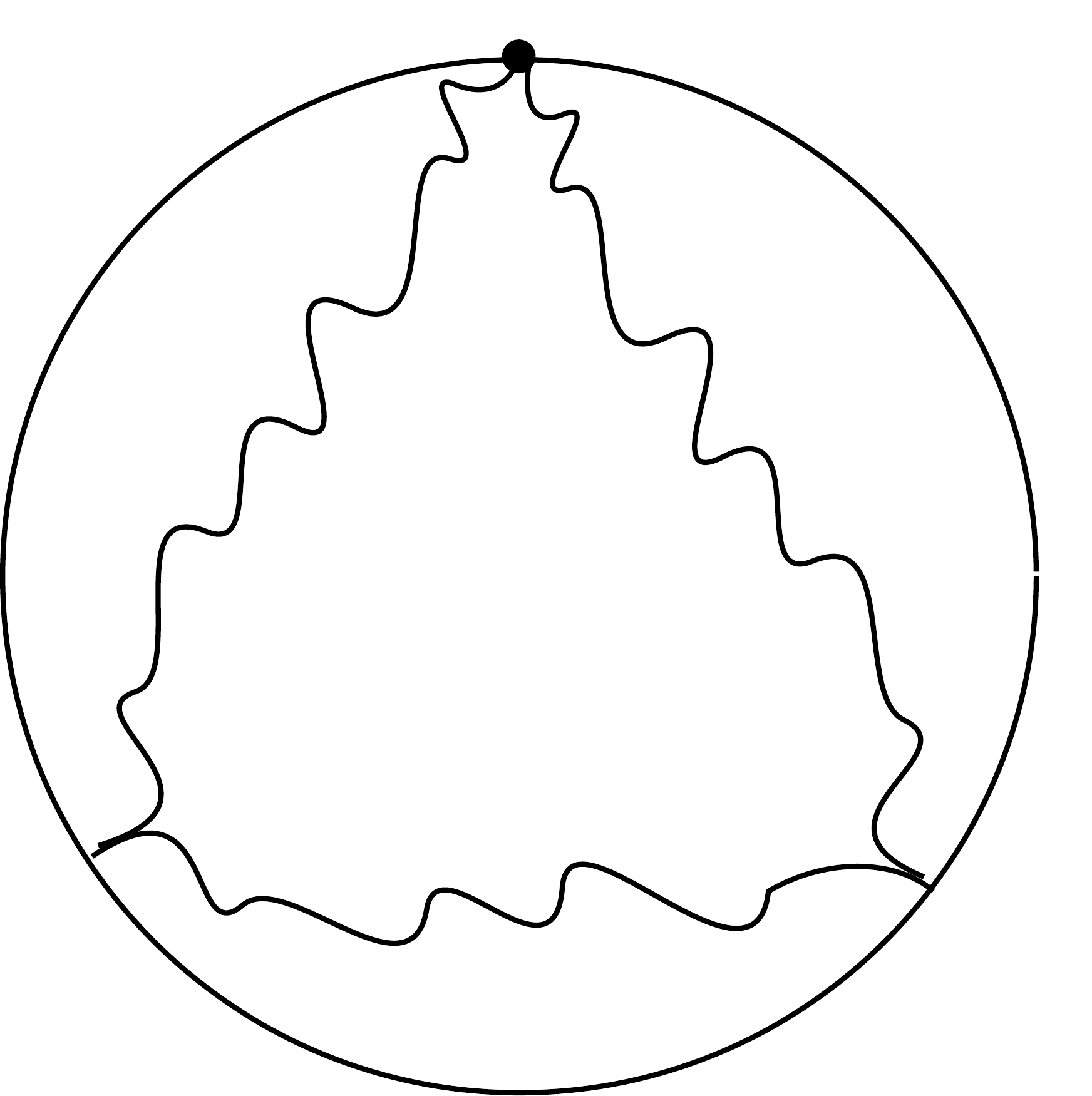
 \captionsetup{width=.85\linewidth}
 \caption{\small Centers of finite triangles approximate centers of ideal triangles. See proof of Lemma \ref{lem:paulin2}. } \label{fig:paulin_non_proper}
\end{figure} 

\end{proof}
\subsection{Maximal almost HHSs have the small cross-ratio property} \label{sec:HHS_smallcross}
Throughout this section, $(\mc{X}, \mf{S})$ will denote a maximal almost HHS, and $CS$ will denote the hyperbolic space corresponding to the unique maximal element $S\in \mf{S}$. 
In this section, we prove that $\mc{X}$ has the small cross-ratio property, and thus Theorems \ref{thrm:main_body_part1} and \ref{thrm:main_body_part2} apply to $\mc{X}$ as long as it is a Morse centered space. 
\begin{proposition} \label{small:cross-ratio}
If $\mc{X}$ is a proper geodesic metric space that can be equipped with a maximal almost HHS structure, then for each Morse gauge $N$, there exists a constant $C$ such that for any $(a,b,c,d) \in \partial \mc{X}^{(4,N)}$ at least one of $[a,b,c,d]_N$, $[a,c,b,d]_N $, or $[b,a,c,d]_N$ is less than $C$.
\end{proposition}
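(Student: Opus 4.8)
The plan is to transport the cross-ratio problem from $\mc{X}$ to the hyperbolic space $CS$ via the projection $\pi_S$, apply the small cross-ratio property for (possibly non-proper) hyperbolic spaces established in Lemma \ref{lem:paulin2}(2), and then pull the conclusion back to $\mc{X}$. The key structural fact enabling this is the characterization of Morse quasi-geodesics in maximal almost HHSs (Theorem \ref{thrm:HHS_Stability}): a quasi-geodesic $\gamma$ is $N$--Morse if and only if $\pi_S \circ \gamma$ is an $(L,L)$--quasi-geodesic with $L$ determined by $N$, and in that case $\gamma$ projects to a uniformly bounded set in every $CU$ with $U \neq S$. Consequently the injective map $\ol{\pi}_S : \partial \mc{X} \to \partial CS$ sends $\partial \mc{X}^{(n,N)}$ into tuples of distinct points of $\partial CS$ (distinctness: if two $N$--Morse rays stay bounded distance apart in $CS$ they are asymptotic in $\mc{X}$ by the distance formula, Theorem \ref{dist:formula}), and a geodesic triangle in $\mc{X}$ with $N$--Morse sides projects to an $(L,L)$--quasi-geodesic triangle in $CS$.

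First I would fix a Morse gauge $N$ and let $L$ be the associated quasi-geodesic constant from Theorem \ref{thrm:HHS_Stability}, and let $\delta$ be the hyperbolicity constant of $CS$. Given $(a,b,c,d) \in \partial \mc{X}^{(4,N)}$, I would choose geodesic triangles $T(a,b,c)$ and $T(a,d,c)$ in $\mc{X}$ with $N$--Morse sides realizing (up to Lemma \ref{lemma:bounded_centers}) the cross-ratio sets $m_{\mc{X}}^{N,K_N}(a,b,c)$ and $m_{\mc{X}}^{N,K_N}(a,d,c)$. The next step is to show that the diameter in $\mc{X}$ of the union of these center sets is controlled above and below by the diameter in $CS$ of the corresponding center sets of the projected triangles. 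The upper bound on center sets is free from the coarse Lipschitz property of $\pi_S$. For the reverse direction one uses the distance formula: since all the relevant geodesics are $N$--Morse, every coordinate $CU$ with $U \neq S$ contributes a uniformly bounded amount, so $d_{\mc{X}}(x,y) \stackrel{A,B}{\asymp} d_{CS}(\pi_S(x), \pi_S(y))$ on the set of points lying close to $N$--Morse geodesics emanating from $\{a,b,c,d\}$; hence a $\pi_S$-image of an $\mc{X}$-center is a (coarse) center in $CS$ for the projected quasi-geodesic triangle, up to Lemma \ref{lem:bounded_modified_centers} and the observation that an $(L,L)$-quasi-geodesic triangle and a $(1,20\delta)$-quasi-geodesic triangle with the same ideal vertices have uniformly Hausdorff-close corresponding sides. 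This yields $[a,b,c,d]_N \stackrel{1,Q}{\asymp} \diam(\hat{m}_{CS}^{K}(\ol{\pi}_S(a), \ol{\pi}_S(b), \ol{\pi}_S(c)) \cup \hat{m}_{CS}^{K}(\ol{\pi}_S(a), \ol{\pi}_S(d), \ol{\pi}_S(c)))$ for suitable $K$ and $Q$ depending only on $N$, and likewise for the two permuted cross-ratios; combining with Lemma \ref{lem:paulin2}(1), each $[\,\cdot\,]_N$ agrees up to additive error with Paulin's cross-ratio $|[\,\cdot\,]|$ of the projected points in $CS$. Finally, Lemma \ref{lem:paulin2}(2) gives a constant $C_0 = C_0(\delta)$ such that one of $|[\ol{\pi}_S(a),\ol{\pi}_S(b),\ol{\pi}_S(c),\ol{\pi}_S(d)]|$, $|[\ol{\pi}_S(a),\ol{\pi}_S(c),\ol{\pi}_S(b),\ol{\pi}_S(d)]|$, $|[\ol{\pi}_S(b),\ol{\pi}_S(a),\ol{\pi}_S(c),\ol{\pi}_S(d)]|$ is less than $C_0$, and pulling back through the additive comparison yields the desired $C$ depending only on $N$.

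The main obstacle I anticipate is the lower bound comparing distances in $\mc{X}$ to distances in $CS$: the distance formula is only a quasi-isometry for the full sum over $\mf{S}$, so one must argue carefully that on the locus of points relevant to the center sets—points lying within a bounded neighborhood of $N$--Morse geodesics with endpoints in $\{a,b,c,d\}$—every non-maximal coordinate $CU$ stays bounded, so that the $CS$ coordinate dominates. This requires knowing that the (finitely many) geodesics forming the two triangles, together with geodesics connecting their centers, are all $N'$--Morse for a uniform $N'$; this is exactly the kind of statement provided by Lemma \ref{3-6:Morse} and Lemma \ref{2-side:Morse}, so the bookkeeping is the real work rather than any new idea. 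A secondary technical point is matching the ``geodesic triangle'' centers used to define $[\,\cdot\,]_N$ with the ``$(1,20\delta)$-quasi-geodesic triangle'' centers $\hat{m}$ used in Lemma \ref{lem:paulin2}; since both classes of (quasi-)geodesics in $CS$ are uniformly Morse, Lemma \ref{Morse:end-points} makes the corresponding sides uniformly Hausdorff-close and the two notions of center sets differ by a uniformly bounded amount, so this causes only a bounded adjustment to the additive constants.
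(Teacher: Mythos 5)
Your proposal is correct and follows essentially the same route as the paper: project to $CS$ via $\ol{\pi}_S$, show the $N$--cross-ratio in $\mc{X}$ coarsely agrees with Paulin's cross-ratio of the projected points (the paper's Proposition \ref{relate:cross-ratios}, proved via the chain $[a,b,c,d]_N \asymp d_{\mc{X}}(x_1,x_2) \asymp d_S(x_1,x_2) \asymp |[a,b,c,d]_S|$ using Theorem \ref{thrm:HHS_Stability}, the distance formula with a large threshold, and the comparison of $(L,L)$-- with $(1,20\delta)$--quasi-geodesic centers), and then apply Lemma \ref{lem:paulin2}(2). The only cosmetic difference is that the paper handles your anticipated ``lower bound'' obstacle more directly: both centers lie within $K_N$ of a geodesic from $a$ to $c$ (the shared diagonal), whose projections to every non-maximal $CU$ are uniformly bounded, so no extra bookkeeping via Lemmas \ref{3-6:Morse} or \ref{2-side:Morse} is needed.
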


Paulin showed that proper hyperbolic spaces have the small cross-ratio property (Corollary \ref{cor:paulin1}), and we have generalized this result to non-proper hyperbolic spaces (Lemma \ref{lem:paulin2}).
While $\mathcal{X}$ is not necessarily hyperbolic, Theorem \ref{thrm:HHS_Stability} gives us an injection $\ol{\pi}_S$ from the Morse boundary of $\mc{X}$ to the Gromov boundary  of $ CS$.  Thus our strategy for proving Proposition \ref{small:cross-ratio} is to relate the notion of cross-ratio in $\partial \mc{X}$ we defined in Section \ref{subsec:quasimobius}  to Paulin's definition of cross-ratio in $CS$ and then apply Lemma \ref{lem:paulin2}. 

 For $(a,b,c,d) \in \partial \mc{X}^{(4,N)}$, define
\[[a,b,c,d]_S= [\ol{\pi}_S(a), \ol{\pi}_S(b), \ol{\pi}_S(c), \ol{\pi}_S(d) ], \]
where the right hand side denotes Paulin's cross-ratio. We will show that $|[a,b,c,d]_S|$ is coarsely equal to our cross-ratio $[a,b,c,d]_N$.

\begin{proposition}\label{relate:cross-ratios}
Let $(\mc{X}, \mf{S})$ be a maximal almost HHS. For every Morse gauge $N$, there exists $Q$ depending only on $N$ and the almost HHS structure such that
\[ [a,b,c,d]_N  \stackrel{Q,Q}{\asymp } |[a,b,c,d]_S| \hspace{10pt} \text{for all } (a,b,c,d) \in \partial \mc{X}^{(4,N)}.\]
\end{proposition}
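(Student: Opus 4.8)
The plan is to establish the coarse equivalence by relating both cross-ratios to a common geometric quantity: the distances between triangle centers in $\mc{X}$ and their images under $\pi_S$ in $CS$. The key structural input is Theorem \ref{thrm:HHS_Stability}, which tells us that for an $N$--Morse geodesic $\gamma$ in $\mc{X}$, the projection $\pi_S \circ \gamma$ is an $(L,L)$--quasi-geodesic in $CS$ with $L$ determined by $N$, and conversely that the diameters $\diam_{CU}(\pi_U(\gamma))$ are uniformly bounded for all non-maximal $U$. First I would fix a Morse gauge $N$ and, given $(a,b,c,d) \in \partial\mc{X}^{(4,N)}$, choose $N$--Morse geodesic ideal triangles $T(a,b,c)$ and $T(a,d,c)$ in $\mc{X}$ realizing $m_{\mc{X}}^{N,K_N}(a,b,c)$ and $m_{\mc{X}}^{N,K_N}(a,d,c)$. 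Applying $\pi_S$ to these triangles produces $(L,L)$--quasi-geodesic triangles in $CS$ with vertices $\ol{\pi}_S(a), \ol{\pi}_S(b), \ol{\pi}_S(c)$ and $\ol{\pi}_S(a), \ol{\pi}_S(d), \ol{\pi}_S(c)$, where $L$ depends only on $N$. Since $\pi_S$ is uniformly coarsely Lipschitz, a $K_N$--center of $T(a,b,c)$ maps to a point within a uniform constant of all three sides of the quasi-geodesic triangle $\pi_S(T(a,b,c))$; that is, $\pi_S(m_{\mc{X}}^{N,K_N}(a,b,c))$ lands inside $\hat{m}_{CS}^{K'}(\ol{\pi}_S(a),\ol{\pi}_S(b),\ol{\pi}_S(c))$ for a suitable $K'$ depending only on $N$ and the HHS constants (using Lemma \ref{cordes:Morse:geodesics} to straighten the quasi-geodesic sides to geodesics, or working directly with $\hat m$). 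By Lemma \ref{lem:paulin2}(1), $|[a,b,c,d]_S|$ is coarsely equal to $\diam(\hat{m}_{CS}^{K'}(\ol{\pi}_S(a),\ol{\pi}_S(b),\ol{\pi}_S(c)) \cup \hat{m}_{CS}^{K'}(\ol{\pi}_S(a),\ol{\pi}_S(d),\ol{\pi}_S(c)))$, and by Lemma \ref{lem:bounded_modified_centers} these modified center sets have uniformly bounded diameter, so $|[a,b,c,d]_S|$ is coarsely the $CS$--distance between any chosen representative centers of the two triangles.

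For the upper bound $|[a,b,c,d]_S| \preceq [a,b,c,d]_N$: pick $x \in m_{\mc{X}}^{N,K_N}(a,b,c)$ and $y \in m_{\mc{X}}^{N,K_N}(a,d,c)$ with $d_{\mc{X}}(x,y)$ realizing (up to a constant) the quantity $[a,b,c,d]_N$. Then $d_{CS}(\pi_S(x),\pi_S(y)) \le A\, d_{\mc{X}}(x,y) + B$ by coarse Lipschitzness, and since $\pi_S(x),\pi_S(y)$ lie uniformly close to the respective $\hat m$-center sets in $CS$, we get $|[a,b,c,d]_S| \le Q\,[a,b,c,d]_N + Q$ after absorbing the bounded-diameter constants from Lemmas \ref{lem:bounded_modified_centers} and \ref{lemma:bounded_centers}. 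The reverse bound $[a,b,c,d]_N \preceq |[a,b,c,d]_S|$ is the harder direction, and this is where I expect the main obstacle: I need that if the centers in $CS$ are close, then the centers upstairs in $\mc{X}$ are close. The point is that $\ol\pi_S$ is injective but a priori not a quasi-isometric embedding on center sets. The resolution should go through Theorem \ref{dist:formula} (the distance formula): for two center points $x,y \in \mc{X}$ lying on $N$--Morse geodesic triangles, every term $[d_{CU}(\pi_U(x),\pi_U(y))]_\sigma$ with $U \neq S$ is controlled — because $x$ and $y$ each lie within $K_N$ of $N$--Morse geodesics, and by Theorem \ref{thrm:HHS_Stability}(2) such geodesics have uniformly bounded projection to every non-maximal $CU$; combining the internal-triple structure (Lemma \ref{lemma:internal_points_exist}) with the bounded projections, one shows $d_{CU}(\pi_U(x),\pi_U(y))$ is bounded by a constant depending only on $N$ for all $U \neq S$. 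Hence in the distance formula $d_{\mc{X}}(x,y) \stackrel{A,B}{\asymp} [d_{CS}(\pi_S(x),\pi_S(y))]_\sigma + (\text{bounded})$, so $d_{\mc{X}}(x,y)$ is coarsely bounded above by $d_{CS}(\pi_S(x),\pi_S(y))$, which in turn is coarsely $|[a,b,c,d]_S|$.

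Assembling these two inequalities gives the claimed $[a,b,c,d]_N \stackrel{Q,Q}{\asymp} |[a,b,c,d]_S|$ with $Q$ depending only on $N$ and the (almost) HHS structure. The main technical care is in the harder direction: verifying that the non-maximal coordinates $d_{CU}(\pi_U(x),\pi_U(y))$ of the difference of two Morse triangle centers are uniformly bounded. This requires knowing that both $x$ and $y$ are each uniformly close in $\mc{X}$ — not merely in each $CU$ — to $N$--Morse geodesics connecting the relevant boundary points, which is exactly what the definition of $K_N$--center together with Lemma \ref{Morse:end-points} provides; one then invokes Theorem \ref{thrm:HHS_Stability}(2) for each such geodesic and the coarse Lipschitz property of $\pi_U$ to bound $d_{CU}(\pi_U(x),\pi_U(y))$. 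Once that is in hand, Theorem \ref{dist:formula} converts the $CS$-distance bound into an $\mc{X}$-distance bound and the proposition follows. With Proposition \ref{relate:cross-ratios} established, Proposition \ref{small:cross-ratio} is immediate: apply Lemma \ref{lem:paulin2}(2) to $CS$ to get one of $|[a,b,c,d]_S|, |[a,c,b,d]_S|, |[b,a,c,d]_S|$ bounded by a $\delta$-dependent constant, then pull back through the coarse equivalence.
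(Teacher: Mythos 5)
Your proposal is correct and follows essentially the same route as the paper: both reduce $[a,b,c,d]_N$ to the distance between chosen centers $x,y$ in $\mc{X}$, use Theorem \ref{thrm:HHS_Stability} plus the distance formula (Theorem \ref{dist:formula}) to show $d_{\mc{X}}(x,y) \asymp d_{CS}(\pi_S(x),\pi_S(y))$, and identify $\pi_S$ of the centers with elements of $\hat{m}^P_{CS}$ so that Lemmas \ref{lem:bounded_modified_centers} and \ref{lem:paulin2}(1) give the comparison with $|[a,b,c,d]_S|$. The one point you leave slightly implicit --- that bounding $d_{CU}(\pi_U(x),\pi_U(y))$ for $U \neq S$ works because both centers lie within $K_N$ of geodesics between the \emph{shared} pair $a,c$, which are uniformly close by Lemma \ref{Morse:end-points} --- is exactly how the paper argues, by fixing a single $N$--Morse geodesic $\gamma$ from $a$ to $c$ and bounding $d_U(x,\gamma)$, $\diam(\pi_U(\gamma))$, and $d_U(y,\gamma)$.
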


\begin{proof} 
For simplicity of notation, in this proof define 
 \[d_{U}(x,x') = d_{CU} (\pi_U(x), \pi_U(x')) \hspace{7pt} \text{ for } x, x' \in \mc{X} \text{ and } U \in \mf{S}.\]
Let $\delta$ denote the hyperbolicity constant of $CS$.  Pick a Morse gauge $N$ and $(a,b,c,d) \in \partial \mc{X}^{(4,N)}$. Select $x_1 \in m_\mc{X}^{N, K_N}(a,b,c)$ and $x_2 \in m_\mc{X}^{N,K_N}(a,c,d)$. We will argue that
\begin{equation} \label{eq:string_coarse_eq} 
[a,b,c,d]_N \asymp d_{\mc{X}}(x_1,x_2) \asymp d_S(x_1,x_2) \asymp | [a,b,c,d]_S|,
\end{equation}
where the constants associated to each $\asymp$ depend only on $N$ and the almost HHS structure on $\mc{X}$. 

 By Lemma \ref{lemma:bounded_centers}, the image of a tuple under $m_\mc{X}^{N,K_N}$ has diameter uniformly bounded by a constant depending only on $N$. Thus, the first coarse equality in Equation (\ref{eq:string_coarse_eq}) holds. 
 
 To prove the second coarse equality in Equation (\ref{eq:string_coarse_eq}), we will show that there exists  $B$ depending only on $N$ such that $d_U(x_1, x_2) \leq B$ for all $U \in \mf{S} - \{S\}$ and then invoke Theorem \ref{dist:formula} with $\sigma > B$. 
Let $\gamma$ be an $N$--Morse geodesic in $\mc{X}$ from $a$ to $c$. By Theorem \ref{thrm:HHS_Stability},  there exists  $B'$ depending only on $N$ such that $\diam(\pi_U(\gamma)) \leq B'$ for all $U \in \mf{S} - \{S\}$. This together with Lemma \ref{Morse:end-points} and the fact that the projection maps $\{\pi_U: U \in \mf{S}\}$ are uniformly coarsely Lipschitz implies that
\[d_U(x_1,x_2) \leq d_U(x_1, \gamma) +\diam(\pi_U(\gamma)) +d_U(x_2,\gamma) \prec 2K_N+B',\]
completing the proof of the second coarse equality of Equation (\ref{eq:string_coarse_eq}).

To prove the final coarse equality in Equation (\ref{eq:string_coarse_eq}), let $T$ be a triangle in $\mc{X}$ with vertices $a,b,$ and $c$ such that $x_1$ is within $K_N$ of each side of $T$. Because $\mc{X}$ is a maximal almost HHS, $\pi_S(T)$ is an $(L,L)$--quasi-geodesic triangle for some $L$ determined by $N$. Because $\pi_S$ is coarsely Lipschitz, the distance between $\pi_S(x_1)$ and each side of  $\pi_S(T)$ is bounded above by a constant determined by $N$.
By Lemma \ref{Morse:end-points} and Remark \ref{remark:Morse_qausi-geoedsic}, the distance between each side of $\pi_S(T)$ and any $(1,20\delta)$--quasi-geodesics between its endpoints in $\partial CS$ is bounded by a constant determined by $N$ and $\delta$.  Thus, $\pi_S(x_1) \in \hat{m}^P_{CS}(a,b,c)$ for some $P$ depending only on $\delta$ and $N$. Similarly, $\pi_S(x_2) \in \hat{m}^P_{CS}(a,b,c)$. 
The last coarse equality now follows from Lemma \ref{lem:bounded_modified_centers} and Lemma \ref{lem:paulin2} part (1). 
\end{proof} 

Proposition \ref{small:cross-ratio} is now proved by combining Proposition \ref{relate:cross-ratios} and Lemma \ref{lem:paulin2} part (2). Thus we have the following corollary to Theorems \ref{thrm:main_body_part1} and \ref{thrm:main_body_part2}. 

\begin{corollary} \label{cor:main_body_HHS} 
Let $\mc{X}$ and $\mc{Y}$ be proper geodesic Morse center spaces that can be equipped with maximal almost HHS structures. Then a homeomorphism $\partial \mc{X} \rightarrow \partial \mc{Y}$ is induced by a quasi-isometry if and only if it and its inverse are 2--stable and quasi-m\"obius.
\end{corollary}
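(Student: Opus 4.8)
The plan is to assemble the corollary directly from the machinery already in place: both implications have essentially been established in Sections 3--5, so the only real task is to check that the hypotheses of the relevant theorems hold for spaces carrying a maximal almost HHS structure.

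First I would dispatch the forward direction. Suppose a homeomorphism $\partial\mc{X}\to\partial\mc{Y}$ is induced by a quasi-isometry $f\colon\mc{X}\to\mc{Y}$. Since $\mc{X}$ and $\mc{Y}$ are proper geodesic metric spaces, Theorem \ref{thrm:quasi-isometry_to_quasi-mobius} applies without modification and yields that the induced homeomorphism $f_*$ and its inverse are $2$--stable and quasi-m\"obius. Note that this direction uses neither the HHS structure nor the Morse center hypothesis.

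For the reverse direction, suppose $h\colon\partial\mc{X}\to\partial\mc{Y}$ is a homeomorphism such that both $h$ and $h^{-1}$ are $2$--stable and quasi-m\"obius. To invoke Theorems \ref{thrm:main_body_part1} and \ref{thrm:main_body_part2} I must verify that $\mc{X}$ and $\mc{Y}$ are proper geodesic Morse center spaces with the small cross-ratio property. Properness, geodesicity, and the Morse center property are part of the hypotheses of the corollary. The small cross-ratio property is exactly the content of Proposition \ref{small:cross-ratio}: because $\mc{X}$, and likewise $\mc{Y}$, can be equipped with a maximal almost HHS structure, for each Morse gauge $N$ there is a constant $C$ such that at least one of $[a,b,c,d]_N$, $[a,c,b,d]_N$, or $[b,a,c,d]_N$ is less than $C$ for every $(a,b,c,d)$ in $\partial\mc{X}^{(4,N)}$ (resp. $\partial\mc{Y}^{(4,N)}$). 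With all hypotheses checked, Theorem \ref{thrm:main_body_part1} gives that $f_K\colon\mc{X}\to\mc{Y}$ is a quasi-isometry for $K$ sufficiently large, and Theorem \ref{thrm:main_body_part2} gives that, for $K$ sufficiently large, $h$ is the boundary homeomorphism induced by $f_K$. Taking $K$ large enough to satisfy both thresholds simultaneously completes the argument.

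I do not anticipate a genuine obstacle at this stage: all the analytic substance lies in Theorems \ref{thrm:quasi-isometry_to_quasi-mobius}, \ref{thrm:main_body_part1}, and \ref{thrm:main_body_part2}, and in Proposition \ref{small:cross-ratio} (which itself rests on Proposition \ref{relate:cross-ratios} identifying our $N$--cross-ratio with Paulin's cross-ratio on $CS$ through the injection $\ol{\pi}_S$, together with Lemma \ref{lem:paulin2} handling the possibly non-proper hyperbolic space $CS$). The only point calling for mild care is the bookkeeping of the constant $K$, which must exceed the thresholds required by both main theorems; since each theorem only asks for ``$K$ sufficiently large,'' the maximum of the two thresholds works.
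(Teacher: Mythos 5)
Your proposal is correct and follows exactly the route the paper takes: the forward direction is Theorem \ref{thrm:quasi-isometry_to_quasi-mobius}, and the reverse direction combines Proposition \ref{small:cross-ratio} (verifying the small cross-ratio property for maximal almost HHSs) with Theorems \ref{thrm:main_body_part1} and \ref{thrm:main_body_part2}. The remark about choosing $K$ above both thresholds is fine and is already subsumed in the statement of Theorem \ref{thrm:main_body_part2}.
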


 Since every HHG with at least three points in its Morse boundary satisfies the hypotheses of Corollary \ref{cor:main_body_HHS}, we recover the Main Theorem from the introduction as a special case of Corollary \ref{cor:main_body_HHS}.  More broadly, Theorems \ref{thrm:main_body_part1} and \ref{thrm:main_body_part2} apply to any maximal almost HHS that has at least three points in its Morse boundary and admits a cocompact group action. 

 We conclude by noting a direct application of our results. If $(\mc{X},\mf{S})$ is an HHS and $\mc{Y}$ is a metric space  with a quasi-isometry $f\colon \mc{Y} \rightarrow \mc{X}$, then we can put an HHS structure on $\mc{Y}$ using the index set $\mf{S}$, hyperbolic spaces $\{CU\}_{U\in\mf{S}}$, and projection maps $\{\pi_U \circ f\}_{U\in \mf{S}}$. Thus Theorem \ref{thrm:main_body_part1} and Proposition \ref{small:cross-ratio} provided the following.

\begin{corollary}
Let $\mc{X}$ and $\mc{Y}$ be proper geodesic Morse center spaces with the small cross-ratio property. Suppose there exists a homeomorphism $h:\partial \mc{X} \rightarrow \partial \mc{Y}$ such that $h$ and $h^{-1}$ are quasi-m\"obuis and 2--stable. If $\mc{X}$ admits an HHS structure, then $\mc{Y}$ also admits an HHS structure.\end{corollary}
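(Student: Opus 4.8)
The plan is to produce an explicit quasi-isometry $\mc{Y} \to \mc{X}$ and then transport the HHS structure backwards along it, using the transfer observation recorded in the paragraph immediately preceding the statement.

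First I would invoke Theorem \ref{thrm:main_body_part1}. Its hypotheses are precisely that $\mc{X}$ and $\mc{Y}$ are proper geodesic Morse center spaces with the small cross-ratio property and that $h$ and $h^{-1}$ are $2$--stable and quasi-m\"obius, all of which are being assumed. The conclusion is that, for $K$ sufficiently large, the map $f_K \colon \mc{X} \to \mc{Y}$ constructed in Section~4.2 is a quasi-isometry.

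Next, since $f_K$ is a quasi-isometry, it admits a quasi-inverse $g \colon \mc{Y} \to \mc{X}$, and $g$ is itself a quasi-isometry. Now I would apply the transfer principle stated just above the corollary: because $(\mc{X}, \mf{S})$ is an HHS, using the index set $\mf{S}$, the hyperbolic spaces $\{CU\}_{U \in \mf{S}}$, and the projection maps $\{\pi_U \circ g\}_{U \in \mf{S}}$ endows $\mc{Y}$ with an HHS structure. This completes the argument.

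There is no genuine obstacle here: the corollary is a formal consequence of Theorem \ref{thrm:main_body_part1} together with the fact that admitting an HHS structure is a quasi-isometry invariant. The only point that even needs comment is that precomposition with the quasi-isometry $g$ preserves the relevant HHS data --- the maps $\pi_U \circ g$ remain uniformly coarsely Lipschitz, and the distance formula, realization, uniqueness, and consistency axioms are all statements about coarse distances that are insensitive to replacing $\mc{X}$ by the quasi-isometric space $\mc{Y}$ --- but this is exactly the content of the transfer observation already recorded in the text, so nothing further is required.
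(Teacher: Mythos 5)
Your argument is correct and is essentially the paper's own: Theorem \ref{thrm:main_body_part1} supplies a quasi-isometry between $\mc{X}$ and $\mc{Y}$, and the transfer observation preceding the corollary then pulls the HHS structure over to $\mc{Y}$ (the paper's extra citation of Proposition \ref{small:cross-ratio} is not needed here, since the small cross-ratio property is assumed). The only cosmetic difference is that instead of passing to a quasi-inverse $g\colon \mc{Y}\to\mc{X}$ you could equally well apply Theorem \ref{thrm:main_body_part1} directly to $h^{-1}$ to get a quasi-isometry $\mc{Y}\to\mc{X}$; either way the conclusion follows.
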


\bibliography{Morse}{}

\begin{thebibliography}{BKMM12}

\bibitem[ABD17]{ABD}
Carolyn Abbott, Jason Behrstock, and Matthew~Gentry Durham.
\newblock Largest acylindrical action and stability in hierarchically
  hyperbolic groups.
\newblock {\em arXiv:1509.00632v2}, 2017.

\bibitem[ACGH17]{contracting}
Goulnara~N. Arzhantseva, Christopher~H. Cashen, Dominik Gruber, and David Hume.
\newblock Characterizations of morse quasi-geodesics via superlinear divergence
  and sublinear contraction.
\newblock {\em arXiv:1601.01897v2}, 2017.

\bibitem[Beh06]{Behrstock}
Jason~A. Behrstock.
\newblock Asymptotic geometry of the mapping class group and {T}eichm\"uller
  space.
\newblock {\em Geom. Topol.}, 10:1523--1578, 2006.

\bibitem[BH99]{BH}
Martin~R. Bridson and Andr\'e Haefliger.
\newblock {\em Metric spaces of non-positive curvature}, volume 319 of {\em
  Grundlehren der Mathematischen Wissenschaften [Fundamental Principles of
  Mathematical Sciences]}.
\newblock Springer-Verlag, Berlin, 1999.

\bibitem[BHS15]{BHSII}
Jason Behrstock, Mark~F. Hagen, and Alessandro Sisto.
\newblock Hierarchically hyperbolic spaces {II}: Combination theorems and the
  distance formula.
\newblock {\em arXiv:1509.00632v2}, 2015.

\bibitem[BHS17]{BHSI}
Jason Behrstock, Mark~F. Hagen, and Alessandro Sisto.
\newblock Hierarchically hyperbolic spaces, {I}: {C}urve complexes for cubical
  groups.
\newblock {\em Geom. Topol.}, 21(3):1731--1804, 2017.

\bibitem[BKMM12]{BKMM}
Jason Behrstock, Bruce Kleiner, Yair Minsky, and Lee Mosher.
\newblock Geometry and rigidity of mapping class groups.
\newblock {\em Geom. Topol.}, 16(2):781--888, 2012.

\bibitem[Bou97]{counter1}
M.~Bourdon.
\newblock Immeubles hyperboliques, dimension conforme et rigidit\'e de
  {M}ostow.
\newblock {\em Geom. Funct. Anal.}, 7(2):245--268, 1997.

\bibitem[BP02]{counter2}
Marc Bourdon and Herv\'e Pajot.
\newblock Quasi-conformal geometry and hyperbolic geometry.
\newblock In {\em Rigidity in dynamics and geometry ({C}ambridge, 2000)}, pages
  1--17. Springer, Berlin, 2002.

\bibitem[Bro03]{Brock}
Jeffrey~F. Brock.
\newblock The {W}eil-{P}etersson metric and volumes of 3-dimensional hyperbolic
  convex cores.
\newblock {\em J. Amer. Math. Soc.}, 16(3):495--535, 2003.

\bibitem[CM17]{CM}
Ruth Charney and Devin Murray.
\newblock A rank-one cat(0) group is determined by its morse boundary.
\newblock {\em arXiv:1707.07028v1}, 2017.

\bibitem[Cor15]{cordes}
Matthew Cordes.
\newblock Morse boundaries of proper geodesic metric spaces.
\newblock {\em arXiv:1502.04376v4}, 2015.

\bibitem[CS15]{CS}
Ruth Charney and Harold Sultan.
\newblock Contracting boundaries of {$\rm CAT(0)$} spaces.
\newblock {\em J. Topol.}, 8(1):93--117, 2015.

\bibitem[Dur16]{Durham}
Matthew~Gentry Durham.
\newblock The augmented marking complex of a surface.
\newblock {\em Journal of the London Mathematical Society}, 94(3):933, 2016.

\bibitem[EMR17]{EMR}
Alex Eskin, Howard Masur, and Kasra Rafi.
\newblock Large-scale rank of {T}eichm\"uller space.
\newblock {\em Duke Math. J.}, 166(8):1517--1572, 2017.

\bibitem[Gro87]{gromov1}
M.~Gromov.
\newblock Hyperbolic groups.
\newblock In {\em Essays in group theory}, volume~8 of {\em Math. Sci. Res.
  Inst. Publ.}, pages 75--263. Springer, New York, 1987.

\bibitem[Gro93]{gromov2}
M.~Gromov.
\newblock Asymptotic invariants of infinite groups.
\newblock In {\em Geometric group theory, {V}ol.\ 2 ({S}ussex, 1991)}, volume
  182 of {\em London Math. Soc. Lecture Note Ser.}, pages 1--295. Cambridge
  Univ. Press, Cambridge, 1993.

\bibitem[HS16]{HS}
Mark~F. Hagen and Tim Susse.
\newblock Hierarchical hyperbolicity of all cubical groups.
\newblock {\em arXiv:1609.01313}, 2016.

\bibitem[KB02]{KB}
Ilya Kapovich and Nadia Benakli.
\newblock Boundaries of hyperbolic groups.
\newblock In {\em Combinatorial and geometric group theory ({N}ew {Y}ork,
  2000/{H}oboken, {NJ}, 2001)}, volume 296 of {\em Contemp. Math.}, pages
  39--93. Amer. Math. Soc., Providence, RI, 2002.

\bibitem[MM99]{MMI}
Howard~A. Masur and Yair~N. Minsky.
\newblock Geometry of the complex of curves. {I}. {H}yperbolicity.
\newblock {\em Invent. Math.}, 138(1):103--149, 1999.

\bibitem[MM00]{MMII}
H.~A. Masur and Y.~N. Minsky.
\newblock Geometry of the complex of curves. {II}. {H}ierarchical structure.
\newblock {\em Geom. Funct. Anal.}, 10(4):902--974, 2000.

\bibitem[Pan89]{counter3}
Pierre Pansu.
\newblock Dimension conforme et sph\`ere \`a l'infini des vari\'et\'es \`a
  courbure n\'egative.
\newblock {\em Ann. Acad. Sci. Fenn. Ser. A I Math.}, 14(2):177--212, 1989.

\bibitem[Pau96]{paulin}
Fr\'ed\'eric Paulin.
\newblock Un groupe hyperbolique est d\'etermin\'e par son bord.
\newblock {\em J. London Math. Soc. (2)}, 54(1):50--74, 1996.

\bibitem[Raf07]{Rafi}
Kasra Rafi.
\newblock A combinatorial model for the {T}eichm\"uller metric.
\newblock {\em Geom. Funct. Anal.}, 17(3):936--959, 2007.

\bibitem[Sis17]{HHS_survey}
Alessandro Sisto.
\newblock What is a hierarchically hyperbolic space?
\newblock {\em arXiv:1707.00053}, 2017.

\bibitem[Vok17]{vokes}
Kate~M. Vokes.
\newblock Hierarchical hyperbolicity of the separating curve graph.
\newblock {\em arXiv:1711.03080}, 2017.

\end{thebibliography}
\bibliographystyle{alpha}

\end{document}